\definecolor{linkcolor}{rgb}{0,0,0.675}%
\newtheorem{theorem}{Theorem}[section]
\newtheorem{lemma}[theorem]{Lemma}
\newtheorem{corollary}[theorem]{Corollary}
\newtheorem{proposition}[theorem]{Proposition}
\theoremstyle{definition}
\newtheorem{remark}[theorem]{Remark}
\newtheorem{definition}[theorem]{Definition}
\newtheorem{example}[theorem]{Example}
\newtheorem{dictionary}[theorem]{Dictionary}
\newtheorem{recipe}[theorem]{Recipe}
\newtheorem{summary}[theorem]{Summary}
\newcommand*{\A}{\mathbb A}
\renewcommand*{\P}{\mathbb P}
\newcommand*{\Spec}{\operatorname{Spec}}
\newcommand*{\Hom}{\operatorname{Hom}}
\newcommand*{\End}{\operatorname{End}}
\newcommand*{\varEnd}{\mathit{\mathcal E\hskip-1pt{}nd}}
\newcommand*{\Ext}{\operatorname{Ext}}
\newcommand*{\varExt}{\mathit{\mathcal E\hskip-1pt{}xt}}
\newcommand*{\Cl}{\operatorname{Cl}}
\newcommand*{\Pic}{\operatorname{Pic}}
\newcommand*{\D}{\operatorname{D}}
\renewcommand*{\mod}{\ \operatorname{mod\,}}
\newcommand*{\Z}{\mathbb{Z}}
\newcommand*{\Q}{\mathbb{Q}}
\newcommand*{\C}{\mathbb{C}}
\newcommand*{\gen}{{\mathrm{g}}} 
\let\op\operatorname
\title[\scalebox{0.88}{Exceptional collections on Dolgachev surfaces associated with degenerations}]{Exceptional collections on Dolgachev surfaces associated with degenerations}
\author{Yonghwa Cho and Yongnam Lee}
\date{}
\address{Department of Mathematical Sciences, KAIST, 291 Daehak-ro, Yuseong-gu, Daejeon 305-701, Korea}
\email{yonghwa.cho@kaist.ac.kr}
\address{Department of Mathematical Sciences, KAIST, 291 Daehak-ro, Yuseong-gu, Daejeon 305-701, Korea}
\email{ynlee@kaist.ac.kr}
\keywords{$\Q$-Gorenstein smoothing; Dolgachev surfaces; exceptional collections; derived categories; phantom categories.}
\subjclass[2010]{Primary 14J27; Secondary 14F05, 14B07}
\begin{document}

\begin{abstract}
		Dolgachev surfaces are simply connected minimal elliptic surfaces with $p_g=q=0$ and of Kodaira dimension 1. These surfaces are constructed by logarithmic transformations of rational elliptic surfaces. In this paper, we explain the construction of Dolgachev surfaces via $\Q$-Gorenstein smoothing of singular rational surfaces with two cyclic quotient singularities. This construction is based on the paper\,\cite{LeePark:SimplyConnected}. Also, some exceptional bundles on Dolgachev surfaces associated with $\Q$-Gorenstein smoothing have been constructed based on the idea of Hacking\,\cite{Hacking:ExceptionalVectorBundle}. In the case if Dolgachev surfaces were of type $(2,3)$, we describe the Picard group and present an exceptional collection of maximal length. Finally, we prove that the presented exceptional collection is not full, hence there exists a nontrivial phantom category in the derived category.\par

\end{abstract}

	\maketitle

	\setcounter{tocdepth}{1}\tableofcontents
	\section {Introduction}
		
		In the last few decades, the derived category $\D^{\rm b}(S)$ of a nonsingular projective variety $S$ has been extensively studied by algebraic geometers. One of the attempts is to find an exceptional collection that is a sequence of objects $E_1,\ldots,E_n$ such that
		\[
			\Ext^k(E_i,E_j) = \left\{
				\begin{array}{cl}
					0 & \text{if}\ i > j\\
					0 & \text{if}\ i=j\ \text{and}\ k\neq 0 \\
					\C & \text{if}\ i=j\ \text{and}\ k=0.
				\end{array}
			\right.
		\]
		There were many approaches to find exceptional collections of maximal length if $S$ is a nonsingular projective surface with $p_g = q=0$. Gorodentsev and Rudakov\,\cite{GorodenstevRudakov:ExceptionalBundleOnPlane} have classified all possible exceptional collections in the case $S = \P^2$, and exceptional collections on del Pezzo surfaces has been studied by Kuleshov and Orlov\,\cite{KuleshovOrlov:ExceptionalSheavesonDelPezzo}. For Enriques surfaces, Zube \cite{Zube:ExceptionalOnEnriques} gives an exceptional collection of length $10$, and the orthogonal part is studied by Ingalls and Kuznetsov\,\cite{IngallsKuznetsov:EnriquesQuarticDblSolid} for nodal Enriques surfaces. After initiated by the work of B\"ohning, Graf von Bothmer, and Sosna\,\cite{BGvBS:ExeceptCollec_Godeaux}, there also have come numerous results on the surfaces of general type\,({\it e.g.} \cite{GalkinShinder:Beauville,BGvBKS:DeterminantalBarlowAndPhantom,AlexeevOrlov:DerivedOfBurniat,Coughlan:ExceptionalCollectionOfGeneralType,KSLee:Isogenus_1,GalkinKatzarkovMellitShinder:KeumFakeProjective,Keum:FakeProjectivePlanes}). 
		For surfaces with Kodaira dimension one, such exceptional collections have not been shown to exist, thus it is a natural attempt to find an exceptional collection in $\D^{\rm b}(S)$. In this paper, we use the technique of $\Q$-Gorenstein smoothing to study the case $\kappa(S) = 1$. As far as the authors know, this is the first time to establish an exceptional collection of maximal length on a surface with Kodaira dimension one.
		
		The key ingredient is the method of Hacking\,\cite{Hacking:ExceptionalVectorBundle}, which associates a $T_1$-singularity $(P \in X)$ with an exceptional vector bundle on the general fiber of a $\Q$-Gorenstein smoothing of $X$. A $T_1$-singularity is the cyclic quotient singularity
		\[
			(0 \in \A^2 \big/ \langle \xi \rangle),\quad \xi \cdot(x,y) = (\xi x, \xi^{na-1}y),
		\]
		where $n > a > 0$ are coprime integers and $\xi$ is the primitive $n^2$-th root of unity\,(see the works of Koll\'ar and Shepherd-Barron\,\cite{KSB:CompactModuliOfSurfaces}, Manetti\,\cite{Manetti:NormalDegenerationOfPlane}, and Wahl\,\cite{Wahl:EllipticDeform,Wahl:SmoothingsOfNormalSurfaceSings} for the classification of $T_1$-singularities and their smoothings). In the paper\,\cite{LeePark:SimplyConnected}, Lee and Park constructed new surfaces of general type via $\Q$-Gorenstein smoothings of projective normal surfaces with $T_1$-singularities. Motivated by \cite{LeePark:SimplyConnected}, substantial amount of works was carried out, especially on (1) construction of new surfaces of general type\,({\it e.g.}\,\cite{KeumLeePark:GeneralTypeFromElliptic,LeeNakayama:SimplyGenType_PositiveChar,ParkParkShin:SimplyConnectedGenType_K3,ParkParkShin:SimplyConnectedGenType_K4}); (2) investigation of the KSBA boundaries of the moduli of space of surfaces of general type\,({\it e.g.} \cite{HackingTevelevUrzua:FlipSurfaces,Urzua:IdentifyingNeighbors}). Our approach is based on rather different perspective:
		\begin{center}
			Construct $S$ via a smoothing of a singular surface as in \cite{LeePark:SimplyConnected}, and apply \cite{Hacking:ExceptionalVectorBundle} to investigate $\Pic S$.
		\end{center}
		We study the case $S={}$a Dolgachev surface with two multiple fibers of multiplicities $2$ and $3$, and give an explicit $\Z$-basis for the N\'eron-Severi lattice of $S$\,(Theorem~\ref{thm:Synop_NSLattice}). Afterwards, we find an exceptional collection of line bundles of maximal length in $\D^{\rm b}(S)$\,(Theorem~\ref{thm:Synop_ExceptCollection_MaxLength}).
		\subsection*{Notations and Conventions}
			Throughout this paper, everything will be defined over the field of complex numbers. A surface is an irreducible projective variety of dimension two. 
			If $T$ is a scheme of finite type over $\C$ and $t \in T$ a closed point, then we use $(t \in T)$ to indicate the analytic germ.
			
		Let $n > a > 0$ be coprime integers, and let $\xi$ be the $n^2$-th root of unity. The $T_1$-singularity
		\[
			( 0 \in \A^2 \big/ \langle \xi \rangle ),\quad \xi\cdot(x,y) = (\xi x , \xi^{na-1}y)
		\]
		will be denoted by $\bigl( 0 \in \A^2 \big/ \frac{1}{n^2}(1,na-1) \bigr)$.
		
		If two divisors $D_1$ and $D_2$ are linearly equivalent, we write $D_1 = D_2$ if there is no ambiguity. Two $\Q$-Cartier Weil divisors $D_1,D_2$ are $\Q$-linearly equivalent, denoted by $D_1 \equiv D_2$, if there exists $r \in \Z_{>0}$ such that $rD_1 = rD_2$.
		
		Let $S$ be a nonsingular projective variety. The following invariants are associated with $S$.
		\begin{itemize}[fullwidth,itemindent=10pt]
			\item The geometric genus $p_g(S) = h^2(\mathcal O_S)$.
			\item The irregularity $q(S) = h^1(\mathcal O_S)$.
			\item The holomorphic Euler characteristic $\chi(S)$.
			\item The N\'eron-Severi group $\op{NS}(S) = \Pic S / \Pic^0 S$, where $\Pic^0 S$ is the group of divisors algebraically equivalent to zero.
		\end{itemize}
		
		Since the definitions of Dolgachev surfaces vary in literature, we fix our definition.
		\begin{definition}
			Let $q > p > 0$ be coprime integers. A \emph{Dolgachev surface $S$ of type $(p,q)$} is a minimal, simply connected, nonsingular, projective surface with $p_g(S) = q(S) = 0$ and of Kodaira dimension one such that there are exactly two multiple fibers of multiplicities $p$ and $q$.
		\end{definition}
		
		In the sequel, we will be given a degeneration $S \rightsquigarrow X$ from a nonsingular projective surface $S$ to a projective normal surface $X$, and compare information between them. We use the superscript ``$\gen$'' to emphasize this correlation. For example, we use $X^\gen$ instead of $S$. 
		\subsection*{Synopsis of the paper} In Section~\ref{sec:Construction}, we construct a Dolgachev surface $X^\gen$ of type $(2,n)$ following the technique of Lee and Park\,\cite{LeePark:SimplyConnected}. We begin with a pencil of plane cubics generated by two general nodal cubics, which meet at nine different points. The pencil defines a rational map $\P^2 \dashrightarrow \P^1$, undefined at the nine points of intersection. Blowing up the nine intersection points resolves the indeterminacy of $\P^2 \dashrightarrow \P^1$, hence yields a rational elliptic surface. After additional blow ups, we get two special fibers
		\[
			F_1 := C_1 \cup E_1,\quad\text{and}\quad F_2:= C_2\cup E_2\cup \ldots \cup E_{r+1}.
		\]
		Let $Y$ denote the resulting rational elliptic surface with the general fiber $C_0$, and let $p \colon Y \to \P^2$ denote the blow down morphism. Contracting the curves in the $F_1$ fiber\,(resp. $F_2$ fiber) except $E_1$\,(resp. $E_{r+1}$), we get the morphism $\pi \colon Y \to X$ to a projective normal surface $X$ with two $T_1$-singularities of types 
		\[
			(P_1 \in X) \simeq \Bigl( 0 \in \A^2 \Big/ \frac{1}{4}(1,1) \Bigr) \quad \text{and}\quad (P_2 \in X) \simeq \Bigl( 0 \in \A^2 \Big/ \frac{1}{n^2}(1,na-1) \Bigr)
		\]
		for coprime integers $n > a > 0$. Note that the numbers $n,a$ are determined by the formula
		\[
			\frac{n^2}{na-1} = k_1 - \frac{1}{	k_2 - \frac{1}{\ldots -\frac{1}{k_r}}	},
		\]
		where $-k_1,\ldots,-k_r$ are the self-intersection numbers of the curves in the chain $\{C_2,\ldots,E_r\}$\,(with the suitable order).
		We prove the formula\,(Proposition~\ref{prop:SingularSurfaceX})
		\begin{equation}
			\pi^* K_X \equiv - C_0 + \frac{1}{2}C_0 + \frac{n-1}{n}C_0,		\label{eq:Synop_QuasiCanoncialBdlFormula}
		\end{equation}
		which resembles the canonical bundle formula for minimal elliptic surfaces\,\cite[p.~213]{BHPVdV:Surfaces}. We then obtain $X^\gen$ by taking a general fiber of a $\Q$-Gorenstein smoothing of $X$. Then, since the divisor $\pi_* C_0$ is away from singularities of $X$, it moves to a nonsingular elliptic curve $C_0^\gen$ along the deformation $X \rightsquigarrow X^\gen$. We prove that the linear system $\lvert C_0^\gen \rvert$ defines an elliptic fibration $f^\gen \colon X^\gen \to \P^1$. Comparing (\ref{eq:Synop_QuasiCanoncialBdlFormula}) with the canonical bundle formula on $X^\gen$, we achieve the following theorem.
		\begin{theorem}[see Theorem~\ref{thm:SmoothingX} for details]\label{thm:Synop_NSLattice}
			Let $\varphi \colon \mathcal X \to (0 \in T)$ be a one parameter $\Q$-Gorenstein smoothing of $X$ over a smooth curve germ. Then for a general point $0 \neq t_0 \in T$, the fiber $X^\gen := \mathcal X_{t_0}$ is a Dolgachev surface of type $(2,n)$.
		\end{theorem}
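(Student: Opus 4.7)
The plan is to verify the six defining properties of a Dolgachev surface of type $(2,n)$ one by one, using the degeneration data from Section 2 and, crucially, the formula \eqref{eq:Synop_QuasiCanoncialBdlFormula}. Smoothness of $X^\gen$ is built into the definition of a $\Q$-Gorenstein smoothing. Flatness of $\varphi$ gives $\chi(\mathcal O_{X^\gen}) = \chi(\mathcal O_X)$; since the two singularities of $X$ are rational and the partial resolution $\pi\colon Y\to X$ contracts trees of rational curves, $\chi(\mathcal O_X)=\chi(\mathcal O_Y)=1$. Once I show $q(X^\gen)=0$ (see next paragraph), this forces $p_g(X^\gen)=0$.

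The elliptic fibration comes from the divisor $C_0$, which lies away from $P_1,P_2$. Hence $\pi_* C_0$ is a Cartier divisor on $X$ and lifts to a relative Cartier divisor on $\mathcal X$; restricting to a general fiber produces $C_0^\gen$, a smooth elliptic curve with $(C_0^\gen)^2=0$. Riemann--Roch on $X^\gen$, together with $(C_0^\gen)^2=0$ and the fact that $C_0$ moves in a pencil on $Y$, shows $h^0(C_0^\gen)\ge 2$, so $|C_0^\gen|$ induces a fibration $f^\gen\colon X^\gen\to \P^1$ whose general fiber is $C_0^\gen$. Now the canonical bundle formula for a relatively minimal elliptic fibration reads
\[
K_{X^\gen}\equiv \bigl(2g(\P^1)-2+\chi(\mathcal O_{X^\gen})\bigr)\,C_0^\gen + \sum_i \frac{m_i-1}{m_i}\,F_i,
\]
so $K_{X^\gen}\equiv -C_0^\gen+\sum_i \tfrac{m_i-1}{m_i}F_i$ (using $\chi=1$). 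On the other hand, $\Q$-Gorenstein smoothness means $K_{\mathcal X/T}$ is $\Q$-Cartier, hence $\pi^*K_X$ deforms to $K_{X^\gen}$; comparing with \eqref{eq:Synop_QuasiCanoncialBdlFormula} and matching coefficients forces exactly two multiple fibers, of multiplicities $2$ and $n$. From this, $K_{X^\gen}\equiv(\tfrac12-\tfrac1n)C_0^\gen$ is a positive rational multiple of a fiber, so for $m$ sufficiently divisible $|mK_{X^\gen}|$ is base-point free and induces the fibration, yielding $\kappa(X^\gen)=1$. Minimality is then automatic: any $(-1)$-curve $E$ would give $E\cdot K_{X^\gen}=-1<0$, contradicting $K_{X^\gen}\equiv(\tfrac12-\tfrac1n)C_0^\gen$ with $E\cdot C_0^\gen\ge 0$.

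The hard part will be simple connectedness. Following the strategy of Lee--Park \cite{LeePark:SimplyConnected}, I would show this by a van Kampen argument: the smooth locus $X^{\mathrm{sm}}=X\setminus\{P_1,P_2\}$ retracts to a space whose fundamental group I can read off from the rational elliptic surface $Y$ (which is simply connected) and the contracted chains, while the smooth general fiber $X^\gen$ is built topologically from $X^{\mathrm{sm}}$ by gluing in the two Milnor fibers of the $T_1$-singularities in place of small punctured neighborhoods of $P_1,P_2$. The Milnor fiber of a $T_1$-singularity is simply connected, so each such gluing introduces relations killing the loops around the singular points. Combined with the fact that $Y$, and hence $X$, is simply connected away from the singularities (since all contracted curves are simply connected trees of rational curves meeting the rational $Y$), a direct van Kampen computation yields $\pi_1(X^\gen)=1$. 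In particular $q(X^\gen)=0$, closing the loop with paragraph one. The remaining issue is to verify that the particular configuration produced in Section 2 has a simply connected smooth locus meeting each exceptional chain in a disc -- this requires a careful examination of the blown-up pencil of nodal cubics, which I would carry out by exhibiting simply connected open neighborhoods of the two singular points and invoking van Kampen on the resulting cover of $X^\gen$.
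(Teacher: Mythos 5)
Your outline tracks the paper's proof of Theorem~\ref{thm:SmoothingX} in broad strokes, but two of its load-bearing steps are wrong or unjustified. First, the Milnor fiber of a $T_1$-singularity of type $\bigl(0 \in \A^2\big/\frac{1}{n^2}(1,na-1)\bigr)$ is \emph{not} simply connected: it has $\pi_1 \simeq \Z/n\Z$ (the paper itself records $H_1(M_1,\Z)\simeq\Z/2\Z$ and $H_1(M_2,\Z)\simeq\Z/n\Z$, and these groups are abelian fundamental groups of the Milnor fibers). So the van Kampen computation you describe does not kill the loops around $P_1,P_2$ by gluing in the Milnor fibers alone. The actual mechanism, due to Lee--Park (which the paper simply cites at \cite[p.~499]{LeePark:SimplyConnected}), is that the loops around the contracted chains are already annihilated in $\pi_1\bigl(Y\setminus(C_1\cup C_2\cup E_2\cup\ldots\cup E_r)\bigr)$ because the $(-1)$-curves $E_1$ and $E_{r+1}$ meet the ends of the chains transversally in a single point; without that input your argument fails.

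Second, the inequality $h^0(C_0^\gen)\geq 2$ does not follow from Riemann--Roch plus ``$C_0$ moves in a pencil on $Y$'': semicontinuity only bounds $h^0$ on the general fiber from \emph{above} by $h^0(X,\pi_*C_0)=2$, and Riemann--Roch gives $\chi(C_0^\gen)=1$, not $h^0\geq 2$. The paper obtains the lower bound by exhibiting two distinct effective members of $\lvert C_0^\gen\rvert$, namely $2E_1^\gen$ and $nE_{r+1}^\gen$, where $E_1^\gen,E_{r+1}^\gen$ are the deformations of the Weil divisors $\pi_*E_1,\pi_*E_{r+1}$ produced by the specialization machinery of Section~\ref{subsec:TopologyofX}; the same two divisors are what later become the multiple fibers. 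Relatedly, in the last step ``matching coefficients'' in the canonical bundle formula only yields $\sum_i\frac{m_i-1}{m_i}=\frac12+\frac{n-1}{n}$, which by itself does not force exactly two multiple fibers of multiplicities $2$ and $n$; the paper first invokes Dolgachev's classification (a minimal simply connected surface with $p_g=q=0$ and $\kappa=1$ has exactly two multiple fibers with coprime multiplicities) and then rules out the spurious solutions $(p,q,n)=(3,4,12),(3,5,30)$ using that $n$ is odd. You need both of these inputs to close the argument.
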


		We jump into the case $a=1$ in Section~\ref{sec:ExcepBundleOnX^g}, and explain the construction of exceptional vector bundles\,(mostly line bundles) on $X^\gen$ associated with the degeneration $X^\gen \rightsquigarrow X$ using the method developed in \cite{Hacking:ExceptionalVectorBundle}. Let $\iota \colon Y \to \tilde X_0$ be the contraction of $E_2,\ldots,E_r$. Then, $Z_1 := \iota(C_1)$ and $Z_2 := \iota(C_2)$ are smooth rational curves. There exists a proper birational morphism $\Phi \colon \tilde{\mathcal X} \to \mathcal X$\,(a weighted blow up at the singularities of $X = \mathcal X_0$) such that the central fiber $\tilde{\mathcal X}_0 := \Phi^{-1}(\varphi^{-1}(0))$ is described as follows: it is the union of $\tilde X_0$, the projective plane $W_1 = \P^2_{x_1,y_1,z_1}$, and the weighted projective plane $W_2 = \P_{x_2,y_2,z_2}(1, n-1, 1)$ attached along
		\[
			Z_1 \simeq (x_1y_1=z_1^2) \subset W_1,\quad\text{and}\quad Z_2 \simeq (x_2y_2=z_2^n) \subset W_2.
		\]
		Intersection theory on $W_1$ and $W_2$ tells $\mathcal O_{W_1}(1)\big\vert_{Z_1} = \mathcal O_{Z_1}(2)$ and $\mathcal O_{W_2}(n-1)\big\vert_{Z_2} = \mathcal O_{Z_2}(n)$. The central fiber $\tilde{\mathcal X}_0$ has three irreducible components\,(disadvantage), but each component is more manageable than $X$\,(advantage). We work with the smoothing $\tilde{\mathcal X}/(0 \in T)$ instead of $\mathcal X / (0\in T)$. The general fiber of $\tilde{\mathcal X}/(0\in T)$ does not differ from $\mathcal X/(0\in T)$, hence it is the Dolgachev surface $X^\gen$. If $D$ is a divisor on $Y$ satisfying
		\begin{equation}
			(D.C_1)=2d_1 \in 2\Z,\ (D.C_2)=nd_2 \in n\Z, \text{ and } (D.E_2) = \ldots = (D.E_r) = 0, \label{eq:Synop_GoodDivisorOnY}
		\end{equation}		
		then there exists a line bundle $\tilde{\mathcal D}$ on $\tilde{\mathcal X}_0$ such that
		\[
			\tilde{\mathcal D}\big\vert_{\tilde X_0} \simeq \mathcal O_{\tilde X_0}(\iota_*D),\quad \tilde{\mathcal D}\big\vert_{W_1} \simeq \mathcal O_{W_1}(d_1),\quad \text{and}\quad \tilde{\mathcal D}\big\vert_{W_2} \simeq \mathcal O_{W_2}((n-1)d_2).
		\]
		It can be shown that the line bundle $\tilde{\mathcal D}$ is exceptional, hence it deforms uniquely to give a bundle $\mathscr D$ on the family $\tilde{\mathcal X}$. In this method, we construct $D^\gen \in\Pic X^\gen$ as the divisor associated with the line bundle $\mathscr D\big\vert_{X^\gen}$.
		
		There is a natural topological description of $D^\gen$. Let $B_i \subset X$ be a contractible ball around the singularity $P_i$ and let $M_i$ be the Milnor fiber associated to the smoothing $(P_i \in \mathcal X) / (0 \in T)$. Then $X^\gen$ is diffeomorphic to $(X \setminus (B_1 \cup B_2) ) \cup (M_1 \cup M_2)$, where the union is made by pasting along the natural diffeomorphism $\partial B_i \simeq \partial M_i$\,(see \cite[p.~39]{Manetti:ModuliOfDiffeo}). By Proposition~\ref{prop:Hacking_Specialization}, the relative homology sequence for the pair $(X,\, M_1 \cup M_2)$ reads
		\[
			0 \to H_2(X^\gen,\Z) \to H_2(X,\Z) \to H_1(M_1,\Z) \oplus H_1(M_2,\Z).
		\]
		Since $H_1(M_1,\Z) \simeq \Z/2\Z$ and $H_2(M_2,\Z) \simeq \Z/n\Z$, if $D \in \Pic Y$ is a divisor which fits into the condition (\ref{eq:Synop_GoodDivisorOnY}), then $[\pi_*D] \in H_2(X,\Z)$ maps to the zero element in $H_1(M_1, \Z) \oplus H_1(M_2,\Z)$. Thus, there exists a preimage of $[\pi_*D] \in H_2(X,\Z)$ along $H_2(X^\gen, \Z) \to H_2(X,\Z)$, which is nothing but the Poincar\'e dual of the first Chern class of $\mathcal O_{X^\gen}(D^\gen)$.
				
		Section~\ref{sec:NeronSeveri} concerns the case $n=3$ and $a=1$. Let $D$, $\tilde{\mathcal D}$ and $D^\gen$ be chosen as above. There exists a short exact sequence
		\begin{equation}
			0 \to \tilde{\mathcal D} \to \mathcal O_{\tilde X_0}(\iota_* D) \oplus \mathcal O_{W_1}(d_1) \oplus \mathcal O_{W_2}(2d_2) \to \mathcal O_{Z_1}(2d_1) \oplus \mathcal O_{Z_2}(3d_2) \to 0. \label{eq:Synop_CohomologySequence}
		\end{equation}
		This expresses $\chi(\tilde{\mathcal D})$ in terms of $\chi(\iota_*D)$, $d_1$, and $d_2$. Since the Euler characteristic is deformation invariant, we get $\chi(D^\gen) = \chi(\tilde{\mathcal D})$. Furthermore, it can be proved that $(C_0. D) = (C_0^\gen . D^\gen)$. This implies that $(C_0 . D) = (6 K_{X^\gen} . D^\gen)$. The Riemann-Roch formula reads
		\[
			(D^\gen)^2 = \frac{1}{6}(C_0. D) + 2 \chi(\tilde{\mathcal D}) - 2,
		\]
		which is a clue for discovering the N\'eron-Severi lattice $\op{NS}(X^\gen)$. This leads to the first main theorem of this paper:
		\begin{theorem}[$={}$Theorem~\ref{thm:Picard_ofGeneralFiber}]
			Let $H \in \Pic \P^2$ be the hyperplane divisor, and let $L_0 = p^*(2H)$. Consider the following correspondences of divisors\,(see Figure~\ref{fig:Configuration_Basic}).
			\[
				\begin{array}{c|c|c|c}
				\Pic Y & F_i - F_j & p^*H - 3F_9 & L_0 \\
				\hline%
				\Pic X^\gen & F_{ij}^\gen & (p^*H - 3F_9)^\gen & L_0^\gen \\[1pt]
				\end{array}\raisebox{-0.9\baselineskip}[0pt][0pt]{\,.}
			\]\vskip+5pt\noindent
			Define the divisors $\{G_i^\gen\}_{i=1}^{10} \subset \Pic X^\gen$ as follows:
			\begin{align*}
				G_i^\gen &= -L_0^\gen + 10K_{X^\gen} + F_{i9}^\gen,\quad i=1,\ldots,8;\\
				G_9^\gen &= -L_0^\gen + 11K_{X^\gen};\\
				G_{10}^\gen &= -3L_0^\gen + (p^*H - 3F_9)^\gen + 28K_{X^\gen}.
			\end{align*}
			Then the intersection matrix $\bigl( ( G_i^\gen . G_j^\gen) \bigr)$ is
			\[
				\left[
					\begin{array}{cccc}
						-1  & \cdots & 0 & 0 \\
						\vdots & \ddots & \vdots & \vdots \\
						0 &  \cdots & -1 & 0 \\
						0 &  \cdots & 0 & 1
					\end{array}			
				\right]\raisebox{-2\baselineskip}[0pt][0pt]{.}
			\]
			In particular, $\{G_i^\gen\}_{i=1}^{10}$ is a $\Z$-basis for the N\'eron-Severi lattice $\op{NS}(X^\gen)$.
		\end{theorem}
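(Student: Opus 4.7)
The plan is to compute the Gram matrix $\bigl( (G_i^\gen . G_j^\gen) \bigr)$ directly, verify it is $\op{diag}(-1,\ldots,-1,1)$, and then deduce the $\Z$-basis statement from unimodularity together with a rank count.

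The first step is to rewrite each $G_i^\gen$ in the form $D_i^\gen + c_i K_{X^\gen}$ where $c_i \in \Z$ and $D_i$ is a divisor on $Y$ satisfying the integrality condition (\ref{eq:Synop_GoodDivisorOnY}): for $i \le 8$ take $D_i = -L_0 + F_i - F_9$ and $c_i = 10$; for $i=9$ take $D_9 = -L_0$, $c_9 = 11$; and $D_{10} = -3L_0 + p^*H - 3F_9$, $c_{10} = 28$. Verifying (\ref{eq:Synop_GoodDivisorOnY}) for each $D_i$ is a short calculation on $Y$ from the configuration of the nine base points and the special fibers $F_1 = C_1 \cup E_1$ and $F_2$, and the coefficients $10, 11, 28, -3$ are precisely those that make the integrality hold. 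Bilinearity, the canonical bundle formula $6K_{X^\gen} \equiv C_0^\gen$ (specializing (\ref{eq:Synop_QuasiCanoncialBdlFormula}) at $n=3$, so that $(K_{X^\gen} . D^\gen) = \tfrac{1}{6}(C_0 . D)$), and $K_{X^\gen}^2 = 0$ then reduce everything to the pairings $(D_i^\gen . D_j^\gen)$ and the numbers $(C_0 . D_j)$ on $Y$.

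The self-intersections $(D_i^\gen)^2$ come from the Riemann--Roch identity
\[
	(D^\gen)^2 = \tfrac{1}{6}(C_0 . D) + 2\chi(\tilde{\mathcal D}) - 2
\]
of the Synopsis, with $\chi(\tilde{\mathcal D})$ extracted from (\ref{eq:Synop_CohomologySequence}):
\[
	\chi(\tilde{\mathcal D}) = \chi(\mathcal O_{\tilde X_0}(\iota_* D)) + \chi(\mathcal O_{W_1}(d_1)) + \chi(\mathcal O_{W_2}(2d_2)) - \chi(\mathcal O_{Z_1}(2d_1)) - \chi(\mathcal O_{Z_2}(3d_2)),
\]
where $d_1, d_2$ are read off from (\ref{eq:Synop_GoodDivisorOnY}) applied to $D = D_i$. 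The first summand equals $\chi(\mathcal O_Y(D))$ by Leray, since $\iota$ contracts a chain of $(-2)$-curves numerically orthogonal to $D$, and is computed by Riemann--Roch on the rational surface $Y$. The remaining four terms are elementary Euler characteristics on $\P^2$, on the weighted plane $\P(1,2,1)$, and on the smooth conics $Z_1, Z_2 \simeq \P^1$. Mixed pairings $(D_i^\gen . D_j^\gen)$ follow by applying the same formula to $D_i + D_j$, which again satisfies (\ref{eq:Synop_GoodDivisorOnY}) by $\Z$-linearity, and polarizing.

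The principal obstacle is the arithmetic bookkeeping for $G_{10}^\gen$, since $D_{10}$ involves the non-standard class $p^*H - 3F_9$ and one must carefully track the integers $d_1, d_2$ and the Euler characteristic of $\mathcal O_Y(D_{10} + D_j)$ for every $j$. Once the Gram matrix is confirmed to be $\op{diag}(-1,\ldots,-1,1)$, it has determinant $\pm 1$, so the $G_i^\gen$ generate a rank-$10$ sublattice of $\op{NS}(X^\gen)$ of index one. Because $X^\gen$ is simply connected, $H^2(X^\gen, \Z)$ is torsion-free, and Noether's formula with $p_g = q = 0$ and $K_{X^\gen}^2 = 0$ gives $c_2(X^\gen) = 12$, hence $b_2(X^\gen) = 10 = \op{rk} \op{NS}(X^\gen)$; so the sublattice coincides with $\op{NS}(X^\gen)$.
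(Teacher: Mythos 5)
Your proposal is correct and runs on the same engine as the paper's proof, namely Recipe~\ref{recipe:PicardLatticeOfDolgachev}: the restriction exact sequence (\ref{eq:Synop_CohomologySequence}) on the reducible central fiber $\tilde{\mathcal X}_0$, deformation invariance of $\chi$, the identity $(C_0.D)=(C_0^\gen.D^\gen)$ giving $(K_{X^\gen}.D^\gen)=\frac16(C_0.D)$, and Riemann--Roch on $X^\gen$; your concluding lattice argument (a full-rank sublattice with unimodular Gram matrix has index one, and $b_2=10$ by Noether) is an equivalent, if slightly more standard, packaging of the paper's argument via $E=A^{\rm t}EA$ and $\det A=\pm1$ in the proof of Theorem~\ref{thm:Picard_ofGeneralFiber}. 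The one place where you diverge materially is $G_{10}^\gen$: you propose to compute its row of the Gram matrix head-on from $D_{10}=-3L_0+p^*H-3F_9$, which is exactly the ``arithmetic bookkeeping'' you flag as the principal obstacle. The paper sidesteps this entirely: Proposition~\ref{prop:G_10} establishes the relation $3G_{10}^\gen=G_1^\gen+\cdots+G_9^\gen-K_{X^\gen}$ (by exhibiting $\sum_{i\le8}F_{i9}^\gen$ as three times a Cartier divisor, which is also how the explicit formula for $G_{10}^\gen$ is derived in the first place), after which every intersection number involving $G_{10}^\gen$ follows from bilinearity and the already-computed $9\times9$ block of Proposition~\ref{prop:G_1to9}, with no further Euler-characteristic computations. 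Both routes work, but the paper's is the cheaper one and is worth knowing. One small misstatement in your write-up: the coefficients $10,11,28$ of $K_{X^\gen}$ have nothing to do with the integrality condition (\ref{eq:Synop_GoodDivisorOnY}) --- that condition concerns only the divisors $D_i$ on $Y$, which satisfy it regardless --- those multiples of $K_{X^\gen}$ are chosen to make the collection numerically exceptional; this does not affect any of your computations.
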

		\noindent We point out that the assumption $n=3$ is crucial for the definition of $G_{10}^\gen$. Indeed, its definition is motivated by the proof of \cite[Theorem~3.1]{Vial:Exceptional_NeronSeveriLattice}. The divisor $G_{10}^\gen$ has been chosen to satisfy
		\[
			K_{X^\gen} = G_1^\gen + \ldots + G_9^\gen - 3G_{10}^\gen,
		\]
		which does not make sense for $n>3$ as $K_{X^\gen}$ is not primitive.
		
		In Section~\ref{sec:ExcepCollectMaxLength} we continue to assume $n=3$, $a=1$. We give the proof of the second main theorem of the paper:
		\begin{theorem}[$={}$Theorem~\ref{thm:ExceptCollection_MaxLength} and Corollary~\ref{cor:Phantom}]\label{thm:Synop_ExceptCollection_MaxLength}
			Assume that $X^\gen$ is originated from a cubic pencil $\lvert \lambda p_*C_1 + \mu p_*C_2\rvert$ generated by two general nodal cubics. Then, there exists a semiorthogonal decomposition
			\[
				\bigr\langle \mathcal A,\ \mathcal O_{X^\gen},\ \mathcal O_{X^\gen}(G_1^\gen),\ \ldots,\ \mathcal O_{X^\gen}(G_{10}^\gen),\ \mathcal O_{X^\gen}(2G_{10}^\gen) \bigr\rangle
			\]
			of $\D^{\rm b}(X^\gen)$, where $\mathcal A$ is nontrivial phantom category\,({\it i.e.} $K_0(\mathcal A) = 0$, $\op{HH}_\bullet(\mathcal A) = 0$, but $\mathcal A\not\simeq 0$).
		\end{theorem}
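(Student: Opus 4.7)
The plan is to handle the theorem in three stages: first, verify that the twelve line bundles form an exceptional collection in the stated order; second, deduce $K_0(\mathcal A) = \op{HH}_\bullet(\mathcal A) = 0$ by additivity of these invariants along the semiorthogonal decomposition; third, exhibit a witness showing $\mathcal A \neq 0$.

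The exceptional property of each individual line bundle is automatic because $p_g = q = 0$ gives $H^{>0}(\mathcal O_{X^\gen}) = 0$. Semiorthogonality then amounts to proving $H^k(X^\gen, D) = 0$ for all $k$ and every $D$ in the explicit finite list of differences: $-G_i^\gen$ with $1 \le i \le 10$, $G_i^\gen - G_j^\gen$ with $1 \le i < j \le 10$, $-G_{10}^\gen$, $-2G_{10}^\gen$, and $G_i^\gen - 2G_{10}^\gen$ with $1 \le i \le 9$. I would verify each vanishing by specializing $D^\gen$ to the central fiber $\tilde{\mathcal X}_0$ from Section~\ref{sec:ExcepBundleOnX^g} and invoking upper semicontinuity of cohomology, giving $h^k(X^\gen, D^\gen) \le h^k(\tilde{\mathcal X}_0, \tilde{\mathcal D})$. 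The Mayer--Vietoris-type short exact sequence~(\ref{eq:Synop_CohomologySequence}) then reduces the right-hand side to line-bundle cohomology on the three components $\tilde X_0$, $W_1 = \P^2$, $W_2 = \P(1, n-1, 1)$, and on the gluing curves $Z_1, Z_2$. The contribution from $\tilde X_0$ is accessible via the contraction $\iota \colon Y \to \tilde X_0$ and the blow-up $p \colon Y \to \P^2$, so it reduces to cohomology of explicit divisors on the rational elliptic surface $Y$; the other terms are standard on (weighted) projective planes and rational curves. The bulk of the work is the case-by-case bookkeeping of the connecting homomorphisms for each of the divisor classes above.

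For the second stage, additivity of $K_0$ under the semiorthogonal decomposition gives $K_0(X^\gen) \cong K_0(\mathcal A) \oplus \Z^{12}$. By Theorem~\ref{thm:Picard_ofGeneralFiber}, the classes $[\mathcal O_{X^\gen}]$ together with $\{[\mathcal O_{X^\gen}(G_i^\gen)]\}_{i=1}^{10}$ generate the piece $\Z \oplus \op{NS}(X^\gen)$ of the topological filtration on $K_0$. The identity
\[
 [\mathcal O_{X^\gen}(2G_{10}^\gen)] - 2\bigl[\mathcal O_{X^\gen}(G_{10}^\gen)\bigr] + [\mathcal O_{X^\gen}] = [\mathcal O_x],
\]
valid in $K_0(X^\gen)$ because $(G_{10}^\gen)^2 = 1$ and the Chern character is injective on simply connected surfaces with torsion-free cohomology, then supplies the class of a point. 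Since $X^\gen$ is simply connected, $K_0(X^\gen)$ itself is torsion-free, so the twelve classes generate $K_0(X^\gen)$ integrally and $K_0(\mathcal A) = 0$. For Hochschild homology, the Hodge diamond of a Dolgachev surface ($p_g = q = 0$, $h^{1,1} = 10$) gives $\dim \op{HH}_0(X^\gen) = 12$ with $\op{HH}_n(X^\gen) = 0$ otherwise; each exceptional line bundle contributes exactly one dimension to $\op{HH}_0$, so additivity yields $\op{HH}_\bullet(\mathcal A) = 0$.

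The hardest step, which I expect to be the main obstacle, is proving $\mathcal A \neq 0$: both standard additive invariants vanish on $\mathcal A$, so nontriviality must be certified by a direct construction. The strategy is to produce an explicit object $E \in \D^{\rm b}(X^\gen)$ with $\op{RHom}(E_i, E) = 0$ for every $E_i$ in the collection. Natural candidates include twists $\mathcal O_{X^\gen}(m K_{X^\gen})$ for small $m$ (exploiting the expression $K_{X^\gen} = G_1^\gen + \cdots + G_9^\gen - 3 G_{10}^\gen$), structure sheaves of the two multiple fibers of the elliptic fibration $f^\gen \colon X^\gen \to \P^1$, or their Postnikov projections onto $\mathcal A$ coming from the exceptional collection. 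The orthogonality conditions reduce to cohomological vanishings of the same form as in the semiorthogonality check, so they fall within reach of the degeneration technique used in the first stage. Producing a single such nonzero witness completes Corollary~\ref{cor:Phantom}, and with it the proof of Theorem~\ref{thm:Synop_ExceptCollection_MaxLength}.
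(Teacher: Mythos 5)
Your overall architecture (semiorthogonality by degeneration to $\tilde{\mathcal X}_0$ plus upper semicontinuity, then additivity of $K_0$ and $\op{HH}_\bullet$, then a separate argument for $\mathcal A\not\simeq 0$) matches the paper's, but the third stage contains a genuine gap, and the first two have weaker problems. For $\mathcal A\neq 0$: none of your proposed witnesses can work. Since $K_0(\mathcal A)=0$, every object of $\mathcal A$ has trivial class in $K_0(X^\gen)$, so no line bundle $\mathcal O_{X^\gen}(mK_{X^\gen})$ and no structure sheaf of a multiple fiber can lie in $\mathcal A$; and certifying that the projection of some object onto $\mathcal A$ is nonzero is exactly as hard as the original problem, since all additive invariants of $\mathcal A$ vanish. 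The paper resolves this with Kuznetsov's height technique: it checks from Table~\ref{table:thm:ExceptCollection_MaxLength} that every nonzero $\Ext^p(G_i^\gen,G_j^\gen)$ with $i<j$ (and every $\Ext^p(G_j^\gen,G_i^\gen-K_{X^\gen})$) sits in degree $p=2$, deduces that the anticanonical pseudoheight is nonnegative, and concludes via \cite[Corollary~4.6]{Kuznetsov:Height} that $\op{HH}^0(\mathcal A)\simeq\op{HH}^0(X^\gen)=\C$, hence $\mathcal A\not\simeq 0$. This is the key idea your proposal is missing.

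Two further points. First, in the semiorthogonality check you treat the reduction to divisors on $Y$ as routine bookkeeping, but the paper shows (Remark~\ref{rmk:Configuration_andCohomology}) that for special positions of the two nodal cubics some of the required vanishings, e.g.\ $h^0_{90}=0$, actually \emph{fail}; the purely numerical bounds via Riemann--Roch and ruling out negative curves cannot see this, so the proof must use the genericity hypothesis in an essential way. The paper does this by rewriting each problematic divisor as $p^*(dH)$ minus an ideal of conditions on plane curves, computing $h^0(\mathcal O_{\P^2}(d)\otimes\mathcal I_{\rm C})$ for one explicit pair of cubics (by Macaulay~2), spreading the bound to general cubics by openness of rank conditions, and symmetrizing over $F_1,\dots,F_8$ by a Galois-theoretic argument (Lemma~\ref{lem:BasePtPermutation}). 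Second, your justification that the twelve classes generate $K_0(X^\gen)$ rests on the claim that the Chern character is injective for a simply connected surface with torsion-free cohomology; this is false in general (it would contradict Mumford's theorem for $p_g>0$) and for $p_g=0$ it is essentially equivalent to the Bloch conjecture. The correct input, used in the paper, is that $X^\gen$ is not of general type, so the Bloch conjecture is known for it (Bloch--Kas--Lieberman, cf.\ \cite[\S11.1.3]{Voisin:HodgeTheory2}), giving $K_0(X^\gen)\simeq\Z^{\oplus 12}$ and hence $K_0(\mathcal A)=0$ by additivity. Your Hochschild homology computation via HKR is fine.
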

		The proof contains numerous cohomology computations. As usual, the main ingredients which relate the cohomologies between $X$ and $X^\gen$ are the upper-semicontinuity and the invariance of Euler characteristics. The cohomology long exact sequence of (\ref{eq:Synop_CohomologySequence}) begins with
		\[
			0 \to H^0(\tilde{\mathcal D}) \to H^0(\iota_*D) \oplus H^0(\mathcal O_{W_1}(d_1)) \oplus H^0(\mathcal O_{W_2}(2d_2)) \to H^0(\mathcal O_{Z_1}(2d_1)) \oplus H^0(\mathcal O_{Z_2}(3d_2)).
		\]
		We prove that if $(D.C_1) = 2d_1 \leq 2$, $(D.C_2) = 3d_2 \leq 3$, and $(D.E_2)=0$, then $h^0(\tilde{\mathcal D}) \leq h^0(D)$. This gives an upper bound of $h^0(D^\gen)$. By Serre duality, $h^2(D^\gen) = h^0(K_{X^\gen} - D^\gen)$, hence we are able to use the same method to estimate the upper bound. After the computations of upper bounds of $h^0(D^\gen)$ and $h^2(D^\gen)$, the upper bound of $h^1(D^\gen)$ can be examined by looking at $\chi(D^\gen)$. For any divisor $D^\gen$ which appears in the proof of Theorem~\ref{thm:Synop_ExceptCollection_MaxLength}, at least one of $\{h^0(D^\gen), h^2(D^\gen)\}$ is zero, and the other one is bounded by $\chi(D^\gen)$. Then, $h^1(D^\gen)=0$ and all the three numbers $(h^p(D^\gen) : p=0,1,2)$ are exactly evaluated. One obstruction to this argument is the condition $d_1, d_2 \leq 1$, but it can be dealt with the following observation:
		\begin{center}
			if a line bundle on $X^\gen$ is obtained from either $C_1$ or $2C_2+E_2$, then it is trivial.
		\end{center}
		Perturbing $D$ by $C_1$ and $2C_2+E_2$, we can adjust the numbers $d_1$, $d_2$.
		
		The proof reduces to find a suitable upper bound of $h^0(D)$. One of the very first trials is to find a smooth rational curve $C \subset Y$ such that $(D.C)$ is small. Then, by the short exact sequence $0 \to \mathcal O_Y(D-C) \to \mathcal O_Y(D) \to \mathcal O_C(D) \to 0$, we get $h^0(D) \leq h^0(D-C) + \min\{ 0,\,(D.C)+1\}$. Replace $D$ by $D-C$ and repeat this procedure. It eventually stops when the value of $h^0(D-C)$ is understood immediately\,({\it e.g.} when $D-C$ is linearly equivalent to a negative sum of effective curves). This will give an upper bound of $h^0(D)$. This method sometimes gives a sharp bound of $h^0(D)$, but sometimes not. Indeed, some cohomologies depend on the configuration of generating cubics $p_*C_1$, $p_*C_2$ of the cubic pencil, while the previous numerical argument cannot capture the configuration of $p_*C_1$ and $p_*C_2$. For those cases, we find an upper bound of $h^0(D)$ as follows. Assume that $D$ is an effective divisor. Then, $p_*D \subset \P^2$ is a plane curve. The divisor form of $D$ determines the degree of $p_*D$ and some conditions that $p_*D$ must admit. For example, consider $D = p^*H - E_1$. The exceptional curve $E_1$ is obtained by blowing up the node of $p_*C_1$. Hence, $p_*D$ must be a line passing through the node of $p_*C_1$. In this way, conditions can be represented by an ideal $\mathcal I \subset \mathcal O_{\P^2}$. Hence, proving $h^0(D) \leq r$ reduces to proving $h^0(\mathcal O_{\P^2}\bigl(\deg p_*D) \otimes \mathcal I\bigr) \leq r$. The latter one can be computed via a computer-based approach\,(Macaulay2). Finally, $\mathcal A\not\simeq 0$ is guaranteed by the argument involving anticanonical pseudoheight due to Kuznetsov\,\cite{Kuznetsov:Height}.
		
		We remark that a (simply connected) Dolgachev surface of type $(2,n)$ cannot have an exceptional collection of maximal length for any $n > 3$ as explained in \cite[Theorem~3.10]{Vial:Exceptional_NeronSeveriLattice}. Also, Theorem~\ref{thm:Synop_ExceptCollection_MaxLength} gives an answer to the question posed in \cite[Remark~3.12]{Vial:Exceptional_NeronSeveriLattice}.
	\section{Construction of Dolgachev Surfaces}	\label{sec:Construction}
		Let $n$ be an odd integer. This section presents the construction of Dolgachev surfaces of type $(2,n)$. The construction follows the technique introduced in \cite{LeePark:SimplyConnected}. Let $C_1,C_2 \subseteq \P^2$ be general nodal cubic curves meeting at $9$ different points, and let $Y' = \op{Bl}_9\P^2 \to \P^2$ be the blow up at the intersection points. Then the cubic pencil $\lvert \lambda C_1 + \mu C_2\rvert$ defines an elliptic fibration $Y' \to \P^1$, with two special fibers $C_1'$ and $C_2'$ (which correspond to the proper transforms of $C_1$ and $C_2$, respectively). Blowing up the nodes of $C_1'$ and $C_2'$, we obtain the $(-1)$-curves, say $E_1$ and $E_2$ respectively. Also, blowing up one of the intersection points of $C_2''$\,(the proper transform of $C_2'$) and $E_2$, we obtain the configuration described in Figure~\ref{fig:Configuration_Basic}.
		\begin{figure}[h]
			\centering\scalebox{0.88}{
			\begin{tikzpicture}
				\draw(0,0)	node[anchor=center] {\includegraphics[scale=0.75, bb = -15 -15 334 240]{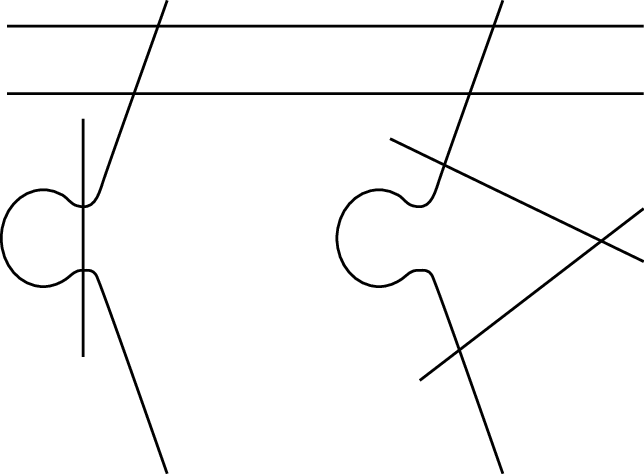}};
				\draw(0,0)	node[anchor=east] (C2) {$C_2''$};
				\node[below,shift=(90:2pt)] at (C2.south) {$\scriptstyle (-5)$};
				\draw(-4.25,0) node[anchor=east] (C1){$C_1''$};
				\node[below,shift=(90:2pt)]	at (C1.south) {$\scriptstyle (-4)$};
				\draw(-3.25,-1.4) node[anchor=north] (E1) {$E_1$};
				\node[below,shift=(90:4pt)] at (E1.south) {$\scriptstyle (-1)$};
				\draw(-0.5,2.75) node[anchor=south] (F1) {$F_1,\ldots,F_9$};
				\node[above,shift=(270:7pt)] at (F1.north) {$\scriptstyle (-1)$};
				\draw(-0.5,2.44) node[anchor=center]{$\vdots$};
				\draw(2.5,-1.1)	node[anchor=center] (E2) {$E_2'$};
				\node[below,shift=(90:2pt)] at (E2.south) {$\scriptstyle (-2)$};
				\draw(2.5,0.8)	node[anchor=center] (E3) {$E_3$};
				\node[above,shift=(270:3pt)] at (E3.north) {$\scriptstyle (-1)$};
			\end{tikzpicture}}\vskip-14pt%
			\caption{Configuration of the divisors in the surface obtained by blowing up two points of $Y'$.}\label{fig:Configuration_Basic}
		\end{figure}
		The divisors $F_1,\ldots,F_9$ are the proper transforms of the exceptional fibers of $Y' = \op{Bl}_9\P^2 \to \P^2$. The numbers in the parentheses are self-intersection numbers of the corresponding divisors.
%
		On the fiber $C_2'' \cup E_2' \cup E_3$, we can think of two different blow ups as the following dual intersection graphs illustrate.
		\[
			\begin{tikzpicture}[scale=1]
				\draw(0,0) node[anchor=center] (C2) {};
				\draw(40pt,0pt) node[anchor=center] (E2) {};
				\draw(20pt,15pt) node[anchor=center] (E3) {};
				\node[below,shift=(90:1pt)] at (C2.south) {$\scriptstyle -5$};
				\node[below,shift=(90:1pt)] at (E2.south) {$\scriptstyle -2$};
				\node[above] at (E3.north) {$\scriptstyle -1$};
				\fill[red] (C2) circle (1.5pt);
				\fill[blue] (E2) circle (1.5pt);
				\fill[black] (E3) circle (1.5pt);
				\draw[red,-] (C2.north east) -- (E3.south west) node[above left, align=center, midway]{\tiny L};
				\draw[blue,-] (E2.north west) -- (E3.south east) node[above right, align=center, midway]{\tiny R};
				\draw[-] (C2.east) -- (E2.west);
				\begin{scope}[shift={(-170pt,0pt)}]
					\draw(0,0) node[anchor=center] (L C2) {};
					\draw(40pt,0pt) node[anchor=center] (L E2) {};
					\draw(40pt,15pt) node[anchor=center] (L E3) {};
					\draw(80pt,0pt) node[anchor=center] (L E4) {};
					\node[below] at (L C2.south) {$\scriptstyle -6$};
					\node[below] at (L E2.south) {$\scriptstyle -2$};
					\node[below] at (L E4.south) {$\scriptstyle -2$};
					\node[above] at (L E3.north) {$\scriptstyle -1$};
					\fill[red] (L C2) circle (1.5pt);
					\fill[blue] (L E2) circle (1.5pt);
					\fill[red] (L E3) circle (1.5pt);
					\fill[black] (L E4) circle (1.5pt);
					\draw[-] (L C2.north east) -- (L E3.south west) node[above, align=center, midway]{\tiny L'};
					\draw[-] (L E2.east) -- (L E4.west);
					\draw[-] (L C2.east) -- (L E2.west);
					\draw[-] (L E4.north west) -- (L E3.south east) node[above, align=center, midway]{\tiny R'};
				\end{scope}
				\begin{scope}[shift={(130pt,0pt)}]
					\draw(0,0) node[anchor=center] (R C2) {};
					\draw(40pt,0pt) node[anchor=center] (R E2) {};
					\draw(40pt,15pt) node[anchor=center] (R E3) {};
					\draw(80pt,0pt) node[anchor=center] (R E4) {};
					\node[below] at (R C2.south) {$\scriptstyle -2$};
					\node[below] at (R E2.south) {$\scriptstyle -5$};
					\node[below] at (R E4.south) {$\scriptstyle -3$};
					\node[above] at (R E3.north) {$\scriptstyle -1$};
					\fill[black] (R C2) circle (1.5pt);
					\fill[red] (R E2) circle (1.5pt);
					\fill[blue] (R E3) circle (1.5pt);
					\fill[blue] (R E4) circle (1.5pt);
					\draw[-] (R C2.north east) -- (R E3.south west) node[above, align=center, midway]{\tiny L'};
					\draw[-] (R E2.east) -- (R E4.west);
					\draw[-] (R C2.east) -- (R E2.west);
					\draw[-] (R E4.north west) -- (R E3.south east) node[above, align=center, midway]{\tiny R'};
				\end{scope}
				\draw [->,decorate,decoration={snake,amplitude=1pt,segment length=5pt, post length=2pt}]	(-15pt,5pt) -- (-75pt, 5pt) node[below, align=center, midway]{$\scriptstyle \op{Bl}_{\rm L}$};
				\draw [->,decorate,decoration={snake,amplitude=1pt,segment length=5pt, post length=2pt}]	(55pt,5pt) -- (115pt, 5pt) node[below, align=center, midway]{$\scriptstyle \op{Bl}_{\rm R}$};
			\end{tikzpicture}
		\]
		In general, if one has a fiber with configuration\!\!%
		\raisebox{-11pt}[0pt][13pt]{
			\begin{tikzpicture}
				\draw(0,0) node[anchor=center] (E1) {};
				\draw(30pt,0pt) node[anchor=center] (E2) {};
				\draw(60pt,0pt) node[anchor=center, inner sep=10pt] (E3) {};
				\draw(90pt,0pt) node[anchor=center] (E4) {};
				\fill[black] (E1) circle (1.5pt);
				\fill[black] (E2) circle (1.5pt);
				\draw (E3)	node[anchor=center]{$\cdots$};
				\fill[black] (E4) circle (1.5pt);
				\node[black,below,shift=(90:2pt)] at (E1.south) {$\scriptscriptstyle -k_1$};
				\node[black,below,shift=(90:2pt)] at (E2.south) {$\scriptscriptstyle -k_2$};
				\node[black,below,shift=(90:2pt)] at (E4.south) {$\scriptscriptstyle -k_r$};
				\draw[-] (E1.east) -- (E2.west);
				\draw[-] (E2.east) -- (E3.west);
				\draw[-] (E3.east) -- (E4.west);
			\end{tikzpicture} }\!\!\!\!, %
		then blowing up at L yields\!\!\!\!%
		\raisebox{-11pt}[0pt][13pt]{
			\begin{tikzpicture}
				\draw(0,0) node[anchor=center] (E1) {};
				\draw(25pt,0pt) node[anchor=center] (E2) {};
				\draw(50pt,0pt) node[anchor=center, inner sep=10pt] (E3) {};
				\draw(75pt,0pt) node[anchor=center] (E4) {};
				\draw(100pt,0pt) node[anchor=center] (E5) {};
				\fill[black] (E1) circle (1.5pt);
				\fill[black] (E2) circle (1.5pt);
				\draw (E3)	node[anchor=center]{$\cdots$};
				\fill[black] (E4) circle (1.5pt);
				\fill[black] (E5) circle (1.5pt);
				\node[black,below,shift=(90:2pt)] at (E1.south) {$\scriptscriptstyle -(k_1+1)$};
				\node[black,below,shift=(90:2pt)] at (E2.south) {$\scriptscriptstyle -k_2$};
				\node[black,below,shift=(90:2pt)] at (E4.south) {$\scriptscriptstyle -k_r$};
				\node[black,below,shift=(90:2pt)] at (E5.south) {$\scriptscriptstyle -2$};
				\draw[-] (E1.east) -- (E2.west);
				\draw[-] (E2.east) -- (E3.west);
				\draw[-] (E3.east) -- (E4.west);
				\draw[-] (E4.east) -- (E5.west);
		\end{tikzpicture} }\!\!. %
		Similarly, the blowing up at R yields\!\!%
		\raisebox{-12pt}[0pt][13pt]{
			\begin{tikzpicture}
				\draw(0,0) node[anchor=center] (E1) {};
				\draw(25pt,0pt) node[anchor=center] (E2) {};
				\draw(50pt,0pt) node[anchor=center, inner sep=10pt] (E3) {};
				\draw(75pt,0pt) node[anchor=center] (E4) {};
				\draw(108pt,0pt) node[anchor=center] (E5) {};
				\fill[black] (E1) circle (1.5pt);
				\fill[black] (E2) circle (1.5pt);
				\draw (E3)	node[anchor=center]{$\cdots$};
				\fill[black] (E4) circle (1.5pt);
				\fill[black] (E5) circle (1.5pt);
				\node[black,below,shift=(90:2pt)] at (E1.south) {$\scriptscriptstyle -2$};
				\node[black,below,shift=(90:2pt)] at (E2.south) {$\scriptscriptstyle -k_1$};
				\node[black,below,shift=(90:2pt)] at (E4.south) {$\scriptscriptstyle -k_{r-1}$};
				\node[black,below,shift=(90:2pt)] at (E5.south) {$\scriptscriptstyle -(k_r+1)$};
				\draw[-] (E1.east) -- (E2.west);
				\draw[-] (E2.east) -- (E3.west);
				\draw[-] (E3.east) -- (E4.west);
				\draw[-] (E4.east) -- (E5.west);
		\end{tikzpicture} }\!\!\!\!\!\!\!.\ \ %
		These present all possible resolution graphs of $T_1$-singularities\,\cite[Theorem~17]{Manetti:NormalDegenerationOfPlane}. Let $Y$ be the surface obtained after successive blow ups on the second special fiber $C_2'' \cup E_2' \cup E_3$, so that the resulting fiber contains the resolution graph of a $T_1$-singularity of type $\bigl(0 \in \A^2 / \frac{1}{n^2}(1, na-1)\bigr)$ for some odd integer $n$ and an integer $a$ with $\op{gcd}(n,a)=1$.
	
		To simplify notations, we will not distinguish the divisors and their proper transforms unless there arise ambiguities. For instance, the proper transform of $C_1 \in \Pic \P^2$ in $Y$ will be denoted by $C_1$, and so on. We fix this configuration of $Y$ throughout this paper, so it is appropriate to give a summary here:
		\begin{enumerate}[label=\normalfont(\arabic{enumi})]
			\item the $(-1)$-curves $F_1,\ldots,F_9$ that are proper transforms of the exceptional fibers of $\op{Bl}_9 \P^2 \to \P^2$;
			\item the $(-4)$-curve $C_1$ and the $(-1)$-curve $E_1$ arising from the blowing up of the first nodal curve;
			\item the negative curves $C_2,\,E_2,\,\ldots,\,E_r,\,E_{r+1}$, where $E_{r+1}^2 = -1$ and $C_2,\,E_2,\,\ldots,\,E_r$ form a resolution graph of a $T_1$-singularity of type $\bigl(0 \in \A^2 \big/\frac{1}{n^2}(1,na-1)\bigr)$.
		\end{enumerate}\vskip+5pt
		\begin{figure}[h]
			\centering\scalebox{0.88}{
				\begin{tikzpicture}
					\draw(0,0)	node[anchor=center] {\includegraphics[scale=0.75, bb = -17 -22 357.5 244.5]{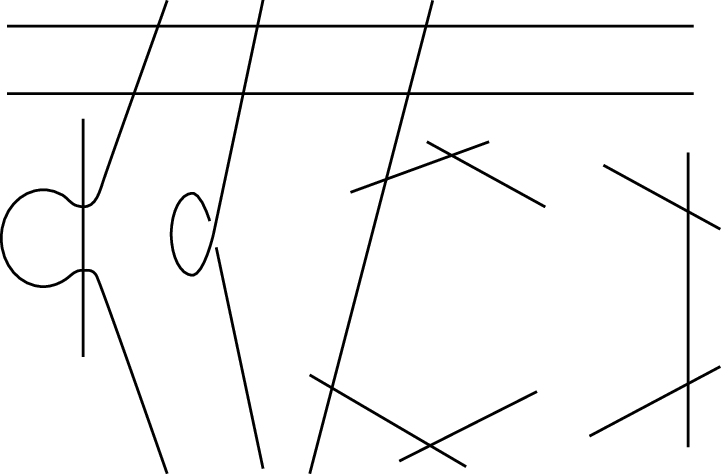}};
					\draw(0,-0.25)	node[anchor=east]  {$C_2$};
					\draw(-4.5,0) node[anchor=east] {$C_1$};
					\draw(-3.55,-1.4) node[anchor=north] {$E_1$};
					\draw(-0.2,2.75) node[anchor=south] {$F_1,\ldots,F_9$};
					\draw(-0.2,2.44) node[anchor=center]{$\vdots$};
					\draw(2.55,-2.25)node[anchor=center]  {$\cdots$};
					\draw(2.65,0.8)	node[anchor=center] {$\cdots$};
					\draw(0.5,-2.5)		node[anchor=east]	{$E_{j_1}$};
					\draw(0.7,-2)			node[anchor=west]	{$E_{j_2}$};
					\draw(3.85,-2.5)		node[anchor=east]	(Ejl) {$E_{j_\ell}$};
					\draw(1.2,0.92)	node[anchor=north east] {$E_{i_1}$};
					\draw(1.54,1.04)	node[anchor=west]	{$E_{i_2}$};
					\draw(3.9,0.80)	node[anchor=north east] {$E_{i_k}$};
					\draw(4.2,-0.75)	node[anchor=west]	{$E_{r+1}$};
					\draw(-2.35,-1.1)	node[anchor=south west] {$C_0$};
					\node[below,shift=(90:3pt)] at (Ejl.south) {$\scriptstyle(-2)$};
				\end{tikzpicture}	}\vskip-14pt
				\caption{Configuration of the surface $Y$. The sequence $E_{i_k},\,\ldots,\,E_{i_1},\,C_2,\allowbreak{}E_{j_1},\,\ldots,\,\allowbreak E_{j_\ell}$ forms the chain of the resolution graph of a $T_1$-singularity of type $\bigl(0 \in \A^2 \big/ \frac{1}{n^2}(1,na-1)\bigr)$. Without loss of generality, we may assume $j_\ell = r$. Note that $E_r^2 = -2$ by construction.
				}\label{fig:Configuration_General}
			\end{figure}
		Let $C_0$ be the general fiber of the elliptic fibration $Y \to \P^1$. The fibers are linearly equivalent, thus
		\begin{align}
			C_0 &= C_1 + 2E_1 \nonumber \\
			&= C_2 + a_2 E_2 + a_3 E_3 + \ldots + a_{r+1} E_{r+1}, \label{eq:SpecialFiber}
		\end{align}
		where $a_2,\ldots,a_{r+1}$ are the integers determined by the system of linear equations
		\begin{equation}\label{eq:EquationOnFiber}
			(C_2.E_i) + \sum_{j=2}^{r+1} a_j (E_j.E_i) = 0,\quad i=2,\ldots, r+1.
		\end{equation}
		Note that the values $(C_2.E_i)$, $(E_j.E_i)$ are explicitly given in the configuration\,(Figure~\ref{fig:Configuration_General}). The matrix $\bigl( (E_j.E_i) \bigr)_{2\leq i,j \leq r}$ is negative definite\,\cite{Mumford:TopologyOfNormalSurfaceSingularity}, and the number $a_{r+1}$ is determined by Proposition \ref{prop:SingIndexAndFiberCoefficients}, hence the system (\ref{eq:EquationOnFiber}) has a unique solution.
		\begin{lemma}\label{lem:CanonicalofY}
			In the above situation, the following formula holds:
			\[
				K_Y = E_1 - C_2 - E_2 - \ldots - E_{r+1}.
			\]
		\end{lemma}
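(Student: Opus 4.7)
The plan is to prove the formula by induction on the number of blow-ups performed above the second fiber (equivalently, on $r$), using only the standard blow-up formula $K_{X'} = \sigma^* K_X + E$ for a point blow-up $\sigma\colon X' \to X$ with exceptional $E$.

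For the base case $r = 2$ (the configuration of Figure~\ref{fig:Configuration_Basic}), I would track $K$ through the chain of blow-ups $\P^2 \to Y' \to Y_1 \to Y_2 \to Y_0$. Starting from $K_{\P^2} = -3H$, each blow-up contributes its exceptional divisor's total transform, giving
\[
K_{Y_0} = -3h + \textstyle\sum_{i=1}^9 f_i + e_1 + e_2 + e_3
\]
in the basis of total-transform classes, where $h = p^* H$ and $f_i, e_j$ are the total transforms of the respective exceptional divisors. Since the blow-up producing $e_3$ is based at a point of $e_2$, one has $e_2 = E_2 + E_3$ on $Y_0$; the other total transforms equal the proper transforms. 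Tracking the multiplicity of $C_2$ at each successive blown-up point ($1$ at each base point, $2$ at the node of $C_2'$, and $1$ at the final intersection) yields $[C_2]_{Y_0} = 3h - \sum F_i - 2E_2 - 3E_3$. Substituting into the right-hand side gives
\[
E_1 - C_2 - E_2 - E_3 = -3h + \textstyle\sum F_i + E_1 + E_2 + 2E_3 = K_{Y_0},
\]
completing the base case.

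For the inductive step, consider a further blow-up $\sigma\colon Y^{\rm new} \to Y$ in the second fiber (either $\op{Bl}_{\rm L}$ or $\op{Bl}_{\rm R}$). By Manetti's classification of $T_1$-resolutions (and direct inspection of the configuration), the center $p$ lies on exactly two curves among $\{C_2, E_2, \ldots, E_{r+1}\}$---the current $(-1)$-curve $E_{r+1}$ and one of the chain-ends to which it is attached---each meeting $p$ transversely. Writing these two curves as $A, B$ with new exceptional $E_{\rm new}$, we have $\sigma^* A = A^{\rm new} + E_{\rm new}$, $\sigma^* B = B^{\rm new} + E_{\rm new}$, and $\sigma^* D = D$ for every other relevant curve $D$ (including $E_1$, which lives in the first fiber). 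Applying $\sigma^*$ to the induction hypothesis and adding $E_{\rm new}$ gives
\[
K_{Y^{\rm new}} = E_1 - C_2^{\rm new} - E_2^{\rm new} - \ldots - E_{r+1}^{\rm new} - 2E_{\rm new} + E_{\rm new},
\]
and with the paper's relabeling convention (so $r^{\rm new} = r+1$, the old $E_{r+1}$ becomes a $(-2)$-curve fitting into the new chain, and $E_{\rm new}$ becomes the new $(-1)$-curve $E_{r^{\rm new}+1}$) this is precisely $E_1 - C_2 - E_2 - \ldots - E_{r^{\rm new}+1}$.

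The principal bookkeeping hurdle will be confirming that the blow-up center always lies on exactly two components of the (negative support of the) second fiber and matching the relabeling to the $T_1$-resolution convention of the paper; once these are in place, the uniform coefficient $-1$ on every curve in the chain makes the formula transparently stable under both $\op{Bl}_{\rm L}$ and $\op{Bl}_{\rm R}$.
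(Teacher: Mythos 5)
Your proposal is correct and follows essentially the same route as the paper: induction on $r$, with an identical inductive step (the blow-up center lies on exactly two curves of the chain, namely the current $(-1)$-curve and the adjacent chain-end, each of whose pullbacks acquires a copy of the new exceptional divisor, so that $\varphi^*K_Y + \widetilde E_{r+2}$ restores the uniform coefficient $-1$). The only difference is cosmetic and confined to the base case: the paper leaves the coefficients of $E_2,E_3$ undetermined and fixes them by the genus formula, whereas you compute the class of $C_2$ explicitly by tracking multiplicities through the blow-ups --- both computations are routine and correct.
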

		\begin{proof}
			The proof proceeds by an induction on $r$. The minimum value of $r$ is two, the case in which $C_2\cup E_2$ from the chain%
			\raisebox{0pt}[15pt][0pt]{
			\begin{tikzpicture}
				\draw(0,0) node[anchor=center] (E1) {};
				\draw(20pt,0pt) node[anchor=center] (E2) {};;
				\fill[black] (E1) circle (1.5pt);
				\fill[black] (E2) circle (1.5pt);
				\node[black,shift=(90:2pt)] at (E1.north) {$\scriptscriptstyle -5$};
				\node[black,shift=(90:2pt)] at (E2.north) {$\scriptscriptstyle -2$};
				\draw[-] (E1.east) -- (E2.west);
			\end{tikzpicture} }\!\!. %
			Let $H \in \Pic \P^2$ be a hyperplane divisor, and let $p \colon Y \to \P^2$ be the blowing down morphism. Then
			\[
				K_Y = p^* K_{\P^2} + F_1 + \ldots + F_9 + E_1 + d_2 E_2 + d_3E_3
			\]
			for some $d_2,d_3 \in \Z$. Since any cubic curve in $\P^2$ is linearly equivalent to $3 H$,
			\begin{align*}
				p^* ( 3H ) &= C_0 + F_1 + \ldots + F_9 \\
				&= (C_2 + a_2 E_2 +a_3 E_3) + F_1 + \ldots + F_9
			\end{align*}
			where $a_2,a_3$ are the integers introduced in (\ref{eq:SpecialFiber}). Hence, 
			\begin{align*}
				K_Y &= p^* (-3H) + F_1 + \ldots + F_9 + E_1 + d_2 E_2 + d_3 E_3 \\
				&= E_1 - C_2 + (d_2-a_2)E_2 + (d_3-a_3)E_3.
			\end{align*}
			Here, the genus formula shows that $K_Y = E_1 - C_2 - E_2 - E_3$.
			
			Assume the induction hypothesis that $K_Y = E_1 - C_2 - E_2 - \ldots - E_{r+1}$. Let $D \in \{C_2,E_2,\ldots,E_r\}$ be a divisor intersects $E_{r+1}$, and let $\varphi \colon \widetilde Y \to Y$ be the blowing up at the point $D \cap E_{r+1}$. Then,
			\[
				K_{\widetilde Y} = \varphi^* K_Y + \widetilde E_{r+2},
			\]
			where $\widetilde E_{r+2}$ is the exceptional divisor of $\varphi$. Let $\widetilde C_2, \widetilde E_1, \ldots, \widetilde E_{r+1}$ denote the proper transforms of the corresponding divisors. Then, $\varphi^*$ maps $D$ to $(\widetilde D + \widetilde E_{r+2})$, maps $E_{r+1}$ to $(\widetilde E_{r+1} + \widetilde E_{r+2})$, and maps the other divisors to their proper transforms. It follows that 
			\begin{align*}
				\varphi^*K_Y &= \varphi^*(E_1 - C_2 - \ldots - E_{r+1}) \\
				&= \widetilde E_1 - \widetilde C_2 - \ldots - \widetilde E_{r+1} - 2 \widetilde E_{r+2}.
			\end{align*}
			Hence, $K_{\widetilde Y} = \varphi^* K_Y + \widetilde E_{r+2} = \widetilde E_1 - \widetilde C_2 - \widetilde E_2 - \ldots - \widetilde E_{r+2}$.
		\end{proof}%
		\begin{proposition}\label{prop:SingularSurfaceX}
			Let $\pi \colon Y \to X$ be the contraction of the curves $C_1,\,C_2,\, E_2,\,\ldots,\, E_r$. Let $P_1 = \pi(C_1)$ and $P_2 = \pi(C_2 \cup E_2 \cup \ldots \cup E_r)$ be the singularities of types $\bigl( 0 \in \A^2 \big/ \frac{1}{4}(1,1)\bigr)$ and $\bigl( 0 \in \A^2 \big/ \frac{1}{n^2}(1,na-1)\bigr)$, respectively. Then the following properties of $X$ hold:
			\begin{enumerate}[ref=(\alph{enumi})]
				\item \label{item:SingularSurfaceX_Cohomologies}$X$ is a projective normal surface with $H^1(\mathcal O_X) = H^2(\mathcal O_X)=0$;
				\item $\pi^*K_X \equiv (\frac 12 - \frac 1n)C_0 \equiv  C_0 - \frac{1}{2} C_0 - \frac{1}{n} C_0$ as $\Q$-divisors.
			\end{enumerate}
			In particular, $K_X^2 = 0$, $K_X$ is nef, but $K_X$ is not numerically trivial.
		\end{proposition}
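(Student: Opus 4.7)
Since $T_1$-singularities are cyclic quotient singularities, hence rational, the curves being contracted---the $(-4)$-curve $C_1$ alone for $P_1$, and the Hirzebruch-Jung chain $C_2, E_2, \ldots, E_r$ for $P_2$---form connected, negative-definite configurations of smooth rational curves. By Artin's contractibility criterion, $\pi \colon Y \to X$ exists in the algebraic category and $X$ is a projective normal surface. For the cohomology vanishing I would use the Leray spectral sequence: the rationality of the singularities gives $R^i \pi_* \mathcal O_Y = 0$ for $i \geq 1$, so $H^i(X, \mathcal O_X) \simeq H^i(Y, \mathcal O_Y)$, which vanishes for $i = 1, 2$ since $Y$ is rational.

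\textbf{Part (b).} Set $F := (\tfrac{1}{2} - \tfrac{1}{n}) C_0$. The plan is to prove $\pi^* K_X \equiv F$ via the uniqueness of the pullback: $\pi^* K_X$ is the unique $\Q$-divisor class on $Y$ which (i) has zero intersection with every contracted curve and (ii) differs from $K_Y$ by something $\Q$-linearly equivalent to a divisor supported on the contracted locus. Property (i) for $F$ is immediate since $C_0$ is a general fiber of $Y \to \P^1$ while every contracted curve lies in a special fiber. For (ii), I would write $F = \tfrac{1}{2} C_0 - \tfrac{1}{n} C_0$, substitute $C_0 \sim C_1 + 2E_1$ in the first summand and $C_0 \sim C_2 + a_2 E_2 + \cdots + a_{r+1} E_{r+1}$ (equation~(\ref{eq:SpecialFiber})) in the second, and then combine with Lemma~\ref{lem:CanonicalofY} to obtain
\[
	K_Y - F \;\equiv\; -\tfrac{1}{2} C_1 + \bigl(\tfrac{1}{n} - 1\bigr) C_2 + \sum_{j=2}^{r} \bigl(\tfrac{a_j}{n} - 1\bigr) E_j + \bigl(\tfrac{a_{r+1}}{n} - 1 \bigr) E_{r+1}.
\]
Invoking Proposition~\ref{prop:SingIndexAndFiberCoefficients} for $a_{r+1} = n$ kills the $E_{r+1}$ term and exhibits the right-hand side as supported on the contracted curves. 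Since $\pi^* K_X - F$ is thus $\Q$-linearly equivalent to a divisor supported on the contracted curves while also having zero intersection with each of them, negative-definiteness of the intersection form on the contracted locus forces $\pi^* K_X - F \equiv 0$.

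\textbf{Consequences.} The ``in particular'' assertions follow immediately: $K_X^2 = (\pi^* K_X)^2 = (\tfrac{1}{2} - \tfrac{1}{n})^2 C_0^2 = 0$; the class $\pi^* K_X$ is a positive rational multiple of the nef divisor $C_0$, so $K_X$ is nef; and evaluating on any section $F_i$ of the fibration gives $K_X \cdot \pi_* F_i = \pi^* K_X \cdot F_i = \tfrac{1}{2} - \tfrac{1}{n} > 0$, witnessing that $K_X$ is not numerically trivial. The main obstacle in the plan is the identification $a_{r+1} = n$ supplied by Proposition~\ref{prop:SingIndexAndFiberCoefficients}, which is exactly where the arithmetic of the $T_1$-resolution chain feeds into the canonical class computation; everything else is linear algebra and standard properties of birational morphisms of surfaces.
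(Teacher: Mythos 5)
Your proposal is correct and follows essentially the same route as the paper: part (a) is the identical Leray spectral sequence argument, and part (b) rests on the same three ingredients (Lemma~\ref{lem:CanonicalofY}, the identification $a_{r+1}=n$ from Proposition~\ref{prop:SingIndexAndFiberCoefficients}, and negative definiteness of the exceptional intersection form). The only difference is presentational — you verify the guessed class $(\tfrac12-\tfrac1n)C_0$ against the characterizing properties of $\pi^*K_X$, whereas the paper solves the corresponding linear system for the coefficients; both arguments are the same computation read in opposite directions.
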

		\begin{proof}\ 
			\begin{enumerate}
				\item Since the singularities of $X$ are rational, $R^q \pi_* \mathcal O_Y = 0$ for $q > 0$. The Leray spectral sequence
				\[
					E_2^{p,q} = H^p( X, R^q\pi_* \mathcal O_Y ) \Rightarrow H^{p+q}(Y,\mathcal O_Y)
				\]
				says that $H^p(Y,\mathcal O_Y) \simeq H^p (X, \pi_* \mathcal O_Y) = H^p(X,\mathcal O_X)$ for $p > 0$. The surface $Y$ is obtained from $\P^2$ by a finite sequence of blow ups, hence $H^1(Y,\mathcal O_Y) = H^2(Y,\mathcal O_Y) =0$. One can immediately verify the hypotheses of Artin's criterion for contractibility\,\cite[Theorem~2.3]{Artin:Contractibility} hold, thus $X$ is projective.
				\item 			Since the morphism $\pi$ contracts $C_1,\,C_2,\,E_2,\,\ldots,\,E_r$, we may write
				\[
					\pi^* K_X \equiv K_Y + c_1 C_1 + c_2 C_2 + b_2 E_2 + \ldots + b_r E_r,
				\]
				for $c_1,c_2,b_2,\ldots,b_r \in \Q$(the coefficients may not be integral since $X$ is singular). It is easy to see that $c_1 = \frac 12$. By Lemma~\ref{lem:CanonicalofY},
				\[
					\pi^* K_X \equiv \frac{1}{2}C_0 + (c_2- 1)C_2 + (b_2 -1)E_2+ \ldots + (b_r-1) E_r - E_{r+1}.
				\]
				Both $\pi^*K_X$ and $C_0$ do not intersect with $C_2,E_2,\ldots,E_r$. Thus, we get
				\begin{equation}\label{eq:Aux1}
					\left\{
					\begin{array}{l@{}l}
						0 &{}= (1-c_2)(C_2^2) + \sum_{j =2}^r (1-b_j)(E_j.C_2)  + (E_{r+1}.C_2) \\
						0 &{}= (1-c_2)(C_2.E_i) + \sum_{j=2}^r (1-b_j)(E_j.E_i) + (E_{r+1}.E_i),\quad \text{for\ }i=2,\ldots,r.
					\end{array}
					\right.
				\end{equation}
				After divided by $a_{r+1}$, (\ref{eq:EquationOnFiber}) becomes
				\[
					0=\frac{1}{a_{r+1}} (C_2. E_i) + \sum_{j=2}^r \frac{a_j}{a_{r+1}} (E_j.E_i) + (E_{r+1}.E_i),\quad \text{for\ }i=2,\ldots,r.
				\]
				In addition, the equation $( C_2 + a_2 E_2 + \ldots + a_{r+1} E_r \mathbin. C_2 ) = (C_0 . C_2) = 0 $ gives rise to
				\[
					0=\frac{1}{a_{r+1}} (C_2^2) + \sum_{j=2}^r \frac{a_j}{a_{r+1}} (E_j.C_2) + (E_{r+1}.C_2).
				\]
				Comparing these equations with (\ref{eq:Aux1}), it is easy to see that the ordered tuples
				\[
					(1-c_2,\ 1-b_2,\ \ldots,\ 1-b_r)\quad\text{and}\quad (1/a_{r+1},\ a_2/a_{r+1},\ \ldots,\ a_r / a_{r+1})
				\]
				fit into the same system of linear equations. Since the intersection matrix of the divisors $(C_2,E_2,\ldots,E_r)$ is negative definite,
				\[
					(1-c_2,\, 1-b_2,\, \ldots,\, 1-b_r) = (1/a_{r+1},\ a_2/a_{r+1},\ \ldots,\ a_r / a_{r+1}).
				\]
				It follows that
				\begin{align*}
					\pi^* K_X %
					&\equiv \frac{1}{2}C_0 + (c_2 -1 )C_2 + (b_2 - 1)E_2 + \ldots + (b_r -1) E_r - E_{r+1} \\
					&\equiv \frac{1}{2}C_0 - \frac{1}{a_{r+1}} \bigl( C_2 + a_2 E_2 + \ldots + a_{r+1} E_{r+1} \bigr) \\
					&\equiv \Bigl( \frac{1}{2} - \frac{1}{a_{r+1}} \Bigr) C_0.
				\end{align*}
				It remains to prove $a_{n+1} = n$. This directly follows from Proposition~\ref{prop:SingIndexAndFiberCoefficients}. It is immediate to see that $C_0^2 = 0$, $C_0$ is nef, and $C_0$ is not numerically trivial. The same properties are true for $\pi^*K_X$. \qedhere
		\end{enumerate}
	\end{proof}
	\begin{proposition}\label{prop:SingIndexAndFiberCoefficients}
		Suppose that $C_2 \cup E_2 \cup \ldots \cup E_r$ has the configuration\!\!%
	\raisebox{-11pt}[0pt][13pt]{
		\begin{tikzpicture}
			\draw(0,0) node[anchor=center] (E1) {};
			\draw(30pt,0pt) node[anchor=center] (E2) {};
			\draw(60pt,0pt) node[anchor=center, inner sep=10pt] (E3) {};
			\draw(90pt,0pt) node[anchor=center] (E4) {};
			\fill[black] (E1) circle (1.5pt);
			\fill[black] (E2) circle (1.5pt);
			\draw (E3)	node[anchor=center]{$\cdots$};
			\fill[black] (E4) circle (1.5pt);
			\node[black,below,shift=(90:2pt)] at (E1.south) {$\scriptscriptstyle -k_1$};
			\node[black,below,shift=(90:2pt)] at (E2.south) {$\scriptscriptstyle -k_2$};
			\node[black,below,shift=(90:2pt)] at (E4.south) {$\scriptscriptstyle -k_r$};
			\draw[-] (E1.east) -- (E2.west);
			\draw[-] (E2.east) -- (E3.west);
			\draw[-] (E3.east) -- (E4.west);
		\end{tikzpicture} }\!\!, %
		so that it contracts to give a $T_1$-singularity of type $\bigl( 0 \in \A^2 \big/ \frac{1}{n^2}(1,na-1)\bigr)$. Then, in the expression
		\[
			C_2 + a_2 E_2 + \ldots + a_{r+1} E_{r+1}
		\]
		of the fiber (\ref{eq:SpecialFiber}), the coefficient of the $(-k_1)$-curve is $a$, and the coefficient of the $(-k_r)$-curve is $(n-a)$. Furthermore, $a_{r+1}$ equals to the sum of these two coefficients, hence  $a_{r+1} = n$.
	\end{proposition}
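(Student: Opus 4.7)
The plan is induction on the chain length $r$, using the two moves $\op{Bl}_L$ and $\op{Bl}_R$ described just above Figure~\ref{fig:Configuration_General}. Every resolution chain of a $T_1$-singularity arising in our construction is obtained by iteration of these moves from the minimal chain $[5,2]$, and each move is literally a blow-up of $Y$ at a nodal point of the cycle $C_2\cup E_2\cup\cdots\cup E_{r+1}$. The behaviour of the fiber multiplicities therefore reduces to the pullback formula for $C_0$ under one blow-up, together with the fact that the total transform of the fiber is again a fiber.

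For the base case $r=2$, $[k_1,k_2]=[5,2]$, the three equations $C_0\cdot C_2=C_0\cdot E_2=C_0\cdot E_3=0$ directly yield $(m_1,m_2,m_3)=(1,2,3)$ once the coefficient of $C_2$ is normalized to $1$; this matches $(a,\,n-a,\,n)=(1,2,3)$ for $(n,a)=(3,1)$. For the inductive step, suppose the result holds for a chain $[k_1,\ldots,k_r]$ with parameters $(n,a)$ and multiplicities $m_1=a$, $m_r=n-a$, $m_{r+1}=n$, and let $\varphi\colon\widetilde Y\to Y$ be the $\op{Bl}_L$ move, i.e.\ the blow-up at $p=D_1\cap E_{r+1}$. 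Since $p$ lies on exactly two components of $C_0$, each with local multiplicity $1$, one has $\op{mult}_p C_0=m_1+m_{r+1}$, hence
\[
\varphi^*C_0 = m_1\widetilde D_1 + m_2\widetilde D_2 + \cdots + m_r\widetilde D_r + m_{r+1}\widetilde E_{r+1} + (m_1+m_{r+1})E',
\]
where $E'$ is the exceptional curve. Since $\varphi^*C_0$ is a fiber of the induced elliptic fibration on $\widetilde Y$, this is the fiber decomposition for the new chain $[k_1+1,k_2,\ldots,k_r,2]$ closed by $E'$; reading off the two chain endpoints and the new $(-1)$-curve gives multiplicities $(a,\,n,\,n+a)$, while the strict transform of $C_2$ retains coefficient $1$. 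An entirely symmetric computation for $\op{Bl}_R$ (at $D_r\cap E_{r+1}$) yields endpoint coefficients $n$ and $n-a$ and new $(-1)$-coefficient $2n-a$, again with the $C_2$ normalization preserved.

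To finish, one must identify these outputs with the claim applied to the new chains. A brief Hirzebruch--Jung continued-fraction manipulation shows that $\op{Bl}_L$ transforms the singularity data by $(n,a)\mapsto(n+a,a)$ and $\op{Bl}_R$ transforms it by $(n,a)\mapsto(2n-a,n)$; under these identifications the triples computed above become $(a',\,n'-a',\,n')$ in each case, and the assertion propagates to the longer chain. The principal obstacle is the arithmetic bookkeeping on these continued fractions, but it is routine; the geometric heart of the argument is the single pullback identity displayed above.
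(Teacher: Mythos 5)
Your proposal is correct and follows essentially the same route as the paper: induction on the chain length starting from the base case $[5,2]$ with multiplicities $(1,2,3)$, with the inductive step given by pulling back the fiber under the blow-up at a node of the cycle so that the exceptional curve acquires the sum of the two adjacent multiplicities, and then identifying the new singularity data $(n+a,a)$ resp.\ $(2n-a,n)$ (the paper records the first of these as $\frac{1}{(n+a)^2}(1,n(n+a)-1)$, which is the same singularity up to the standard involution $a\leftrightarrow n-a$). No gaps.
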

	\begin{proof}
		The proof proceeds by an induction on $r$. The case $r = 2$ is trivial. Indeed, a simple computations shows that $n = 3$, $a = 1$, and $a_2 = 2$, $a_3=  3$. To make notations simpler, we reindex $\{C_2,\, E_2,\,\ldots,\, E_{r+1}\}$ as follows: 
		\[
			(G_1,\,G_2,\,\ldots,\,G_{r+1}) = (E_{i_k},\,E_{i_{k-1}},\,\ldots,\,E_{i_1},\,C_2,\,E_{j_1},\,\ldots,\,E_{j_\ell},\,E_{r+1}).\hskip-35pt\tag{Figure~\ref{fig:Configuration_General}}
		\]
		By the induction hypothesis, we may assume 
		\[
			C_2 + a_2E_2 + \ldots + a_{r+1} E_{r+1} = a G_1 + \ldots + (n-a) G_r + n G_{r+1}.
		\]
		Let $\varphi_1 \colon \widetilde Y \to Y$ be the blow up at the point $G_{r+1} \cap G_1$, let $\widetilde G_i$\,($i=1,\ldots,r+1$) be the proper transform of $G_i$, and let $\widetilde G_{r+2}$ be the exceptional divisor. The $(-1)$-curve $\widetilde G_{r+2}$ meets $\widetilde G_1$ and $\widetilde G_{r+1}$ transversally, so
		\begin{align*}
			\varphi^*( aG_1 + \ldots + nG_{r+1}) 
			&= a ( \widetilde G_1 + \widetilde G_{r+2}) + g_2 \widetilde G_2 + \ldots + (n-a) \widetilde G_r + n( \widetilde G_{r+1} + \widetilde G_{r+2}) \\
			&= a \widetilde G_1 + g_2\widetilde G_2 + \ldots + (n-a) \widetilde G_r + n\widetilde G_{r+1} + (n+a) \widetilde G_{r+2}.
		\end{align*}
		It is well-known that the contraction of $\widetilde G_1, \ldots, \widetilde G_{r+1} \subset \widetilde Y$ produces a cyclic quotient singularity of type 
		\[
			\Bigl( 0 \in \A^2 \Big/ \frac{1}{(n+a)^2}(1,(n+a)n-1) \Bigr).
		\]
		This proves the statement for the chain $\widetilde G_1 \cup \ldots \cup \widetilde G_{r+2}$, so we are done by the induction. The same argument also works if one performs the blow up $\varphi_2 \colon \widetilde Y' \to Y$ at the point $G_{r+1} \cap G_r$.
	\end{proof}
	Now we want to obtain a smooth surface via a $\Q$-Gorenstein smoothing of $X$. It is well-known that $T_1$-singularities admit local $\Q$-Gorenstein smoothings, thus we have to verify that:
	\begin{enumerate}
		\item every formal deformation of $X$ is algebraizable;
		\item every local deformation of $X$ can be globalized.
	\end{enumerate}
	The answer for (a) is an immediate consequence of Grothendieck's existence theorem\,\cite[Example~21.2.5]{Hartshorne:DeformationTheory} since $H^2(\mathcal O_X)=0$. The next lemma verifies (b).
	\begin{lemma}\label{lem:NoObstruction}
		Let $Y$ be the nonsingular rational elliptic surface introduced above, and let $\mathcal T_Y$ be the tangent sheaf of $Y$. Then, 
		\[
			H^2(Y, \mathcal T_Y( - C_1 - C_2 - E_2 - \ldots - E_r ) ) = 0.
		\]
		In particular, $H^2(X,\mathcal T_X) = 0$\,(see \cite[Theorem~2]{LeePark:SimplyConnected}).
	\end{lemma}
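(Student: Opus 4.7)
The plan is to apply Serre duality to rewrite the top cohomology as a space of global differentials with prescribed singularities, compute the relevant divisor class using Lemma~\ref{lem:CanonicalofY}, and then verify the vanishing by an exact-sequence argument that exploits the rationality of $Y$.

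Set $D := C_1 + C_2 + E_2 + \ldots + E_r$. Serre duality on the smooth projective surface $Y$ gives
\[
	H^2(Y, \mathcal T_Y(-D))^\vee \;\cong\; H^0\bigl(Y,\, \Omega^1_Y \otimes \mathcal O_Y(K_Y + D)\bigr).
\]
By Lemma~\ref{lem:CanonicalofY}, $K_Y = E_1 - C_2 - E_2 - \ldots - E_{r+1}$, so $K_Y + D = E_1 + C_1 - E_{r+1}$. Writing $M := E_1 + C_1 - E_{r+1}$, it suffices to show $H^0(Y, \Omega^1_Y(M)) = 0$. Tensoring the structure sequence $0 \to \mathcal O_Y(-E_1 - C_1) \to \mathcal O_Y \to \mathcal O_{E_1 + C_1} \to 0$ with $\Omega^1_Y(M)$ yields
\[
	0 \to \Omega^1_Y(-E_{r+1}) \to \Omega^1_Y(M) \to \Omega^1_Y(M)\big|_{E_1 + C_1} \to 0.
\]
Since $Y$ is a rational surface, $H^0(\Omega^1_Y) = 0$ and therefore $H^0(\Omega^1_Y(-E_{r+1})) = 0$, so any global section of $\Omega^1_Y(M)$ injects into $H^0\bigl(\Omega^1_Y(M)\big|_{E_1 + C_1}\bigr)$. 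This reduces the problem to an analysis on the reducible curve $E_1 \cup C_1$.

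Using the conormal sequences for the smooth rational curves $E_1$ and $C_1$ together with the intersection numbers $E_1^2 = -1$, $C_1^2 = -4$, $E_1 \cdot C_1 = 2$, and $E_1 \cdot E_{r+1} = C_1 \cdot E_{r+1} = 0$, one writes each of $\Omega^1_Y(M)\big|_{E_1}$ and $\Omega^1_Y(M)\big|_{C_1}$ as an explicit extension of line bundles on $\P^1$, which splits by the Grothendieck splitting theorem. A candidate section restricts to a compatible pair at the two transverse points of $E_1 \cap C_1$, and the lifting obstruction is the connecting map $H^0(\Omega^1_Y(M)\big|_{E_1 + C_1}) \to H^1(\Omega^1_Y(-E_{r+1}))$. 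Through the Hodge-theoretic identification of $H^1(\Omega^1_Y)$ with $\op{NS}(Y) \otimes \C$, this map is controlled by intersection pairings with $E_1$ and $C_1$; the signs dictated by the negative self-intersections force any liftable candidate to vanish.

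The main obstacle is precisely this last compatibility step: the local splittings on $E_1$ and $C_1$ produce a positive-dimensional space of sections on $E_1 \cup C_1$, and the entire content of the lemma is that \emph{none of them lifts globally}. Tracking the connecting homomorphism requires simultaneous control of the two conormal sequences and of the gluing at $E_1 \cap C_1$, and relies essentially on the rationality of $Y$. Granted this, the concluding assertion $H^2(X, \mathcal T_X) = 0$ follows via the Leray spectral sequence for $\pi \colon Y \to X$ and the standard identification of $\mathcal T_X$ with $\pi_* \mathcal T_Y(-D)$ near the $T_1$-singularities, as detailed in \cite{LeePark:SimplyConnected}.
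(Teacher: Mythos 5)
Your opening moves coincide with the paper's: Serre duality reduces the claim to $H^0(Y,\Omega^1_Y(K_Y+C_1+C_2+E_2+\ldots+E_r))=0$, and Lemma~\ref{lem:CanonicalofY} gives $K_Y+C_1+C_2+E_2+\ldots+E_r = E_1+C_1-E_{r+1}$, which (via $C_0=C_1+2E_1$) is the divisor $C_0-E_1-E_{r+1}$ appearing in the paper. From that point on, however, the paper takes a much shorter route that you do not: since $E_1+E_{r+1}$ is effective, $h^0(\Omega^1_Y(C_0-E_1-E_{r+1}))\leq h^0(\Omega^1_Y(C_0))$, and the latter equals $h^0(Y',\Omega^1_{Y'}(C_0'))$ for $Y'=\op{Bl}_9\P^2$ by the projection formula (the blow-up centres for $Y\to Y'$ are away from $C_0$); the vanishing $h^0(Y',\Omega^1_{Y'}(C_0'))=0$ is then quoted from \cite[\S4, Lemma~2]{LeePark:SimplyConnected}. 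That citation carries the entire nontrivial content.

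Your proposal, by contrast, has a genuine gap exactly where you flag the ``main obstacle.'' Restricting to $E_1\cup C_1$ does reduce the problem, and your numerics are consistent: twisting the conormal sequences by $M=E_1+C_1-E_{r+1}$ gives $(M.E_1)=1$ and $(M.C_1)=-2$, so $\Omega^1_Y(M)\big\vert_{E_1}$ and $\Omega^1_Y(M)\big\vert_{C_1}$ each admit up to three independent sections. Hence $H^0(\Omega^1_Y(M)\big\vert_{E_1+C_1})$ is genuinely positive-dimensional and the lemma is equivalent to the injectivity of the connecting map into $H^1(\Omega^1_Y(-E_{r+1}))$ on that space. You assert this injectivity but do not prove it, and you concede as much (``Granted this''). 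Moreover, the heuristic you offer is off target: the Hodge-theoretic identification $H^1(\Omega^1_Y)\simeq \op{NS}(Y)\otimes\C$ describes $H^1(\Omega^1_Y)$, not $H^1(\Omega^1_Y(-E_{r+1}))$, and the natural map between these two groups need not be injective, so ``intersection pairings with $E_1$ and $C_1$'' do not by themselves control the connecting homomorphism you need. As written, the central vanishing is not established; either carry out the connecting-map computation in full, or replace this step by the paper's reduction to $\op{Bl}_9\P^2$ and the Lee--Park lemma.
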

	\begin{proof}
		The proof is not very different from \cite[\textsection4, Example~2]{LeePark:SimplyConnected}. The main claim is 
		\[
			H^0(Y, \Omega_Y^1(K_Y + C_1 + C_2 + E_2 + \ldots + E_r)) =0.
		\]
		By Lemma~\ref{lem:CanonicalofY} and equation~(\ref{eq:SpecialFiber}), 
		\[
			K_Y + C_1 + C_2 + E_2 + \ldots + E_r = C_0 - E_1 - E_{r+1}.
		\]
		Then, $h^0(Y,\Omega_Y^1(C_0 - E_1 - E_{r+1})) \leq h^0(Y,\Omega_Y^1(C_0)) = h^0(Y',\Omega_{Y'}^1(C_0'))$ where $Y'= \op{Bl}_9\P^2$, and $h^0(Y',\Omega_{Y'}^1(C_0')) =0$ by \cite[\textsection4, Lemma~2]{LeePark:SimplyConnected}. The result directly follows from the Serre duality.
	\end{proof}
		We have shown that the surface $X$ admits a $\Q$-Gorenstein smoothing $\mathcal X \to T$. The next aim is to show that the general fiber $X^\gen := \mathcal X_t$ is a Dolgachev surface of type $(2,n)$.
		\begin{proposition}\label{prop:CohomologyComparison_YtoX}
			Let $X$ be a projective normal surface with only rational singularities, let $\pi \colon Y \to X$ be a resolution of singularities, and let $E_1,\ldots,E_r$ be the exceptional divisors. If $D$ is a divisor on $Y$ such that $(D.E_i)=0$ for all $i=1,\ldots,r$, then
			\[
				H^p(Y,D) \simeq H^p(X,\pi_*D)
			\]
			for all $p \geq 0$.
		\end{proposition}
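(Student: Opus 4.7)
The plan is to apply the Leray spectral sequence
\[
E_2^{p,q} = H^p(X, R^q\pi_*\mathcal O_Y(D)) \Rightarrow H^{p+q}(Y, \mathcal O_Y(D)),
\]
in the same spirit as the proof of Proposition~\ref{prop:SingularSurfaceX}\ref{item:SingularSurfaceX_Cohomologies}, and to reduce the statement to two claims: (i) $R^q\pi_*\mathcal O_Y(D) = 0$ for $q > 0$; and (ii) $\pi_*\mathcal O_Y(D) \simeq \mathcal O_X(\pi_*D)$. Once (i) and (ii) are established, the spectral sequence degenerates on the $E_2$ page and the claimed isomorphism follows immediately.

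For (i), vanishing of $R^q\pi_*$ for $q \geq 2$ is automatic since the exceptional fibers are at most one dimensional. For $q = 1$, I would appeal to the theorem on formal functions: letting $E = \pi^{-1}(x)_{\rm red}$ be the exceptional fiber over a singular point $x \in X$ and $E^{(n)}$ its $n$-th infinitesimal neighborhood in $Y$, it suffices to show $H^1(E^{(n)}, \mathcal O_Y(D)|_{E^{(n)}}) = 0$ for every $n \geq 1$. For $n = 1$: by Artin's classical theorem on rational surface singularities, each component $E_i$ of $E$ is isomorphic to $\P^1$ and the dual graph is a tree; the hypothesis $(D.E_i) = 0$ then forces $\mathcal O_Y(D)|_{E_i} \simeq \mathcal O_{\P^1}$, and an induction on the number of components (contract a leaf and apply Mayer--Vietoris) yields $\mathcal O_Y(D)|_E \simeq \mathcal O_E$, so $H^1(E, \mathcal O_Y(D)|_E) = H^1(E, \mathcal O_E) = 0$ by rationality. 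The induction on $n$ uses the short exact sequence
\[
0 \to \mathcal O_Y(D) \otimes \mathcal I_E^n/\mathcal I_E^{n+1} \to \mathcal O_Y(D)|_{E^{(n+1)}} \to \mathcal O_Y(D)|_{E^{(n)}} \to 0;
\]
since $\mathcal O_Y(D)|_E \simeq \mathcal O_E$, the leftmost sheaf is isomorphic to $\mathcal I_E^n/\mathcal I_E^{n+1}$ as an $\mathcal O_E$-module, and its $H^1$ vanishes (use the analogous filtration of $\mathcal O_{E^{(n+1)}}$ together with $H^1(E^{(n+1)}, \mathcal O_{E^{(n+1)}}) = 0$, which follows from rationality via formal functions). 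The long exact sequence then gives the inductive step.

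For (ii), both $\pi_*\mathcal O_Y(D)$ and $\mathcal O_X(\pi_*D)$ are reflexive sheaves on the normal surface $X$, and they coincide on the open locus where $\pi$ is an isomorphism, whose complement has codimension at least two; hence they are globally isomorphic. The main technical obstacle is part (i): combining the numerical hypothesis $(D.E_i) = 0$ with Artin's structural description of the exceptional locus of a rational surface singularity in order to obtain triviality of $\mathcal O_Y(D)|_E$, and then bootstrapping this to all nilpotent thickenings via the rationality of $X$.
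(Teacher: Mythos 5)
Your argument arrives at the correct conclusion, but by a genuinely different and much longer route than the paper. The paper's proof rests on a single descent fact: since the singularities are rational and $(D.E_i)=0$ for all $i$, the Weil divisor $\pi_*D$ is Cartier and $\pi^*\mathcal O_X(\pi_*D)=\mathcal O_Y(D)$ (this is the classical Artin--Lipman statement that, near a rational surface singularity, a line bundle on the resolution with degree zero on every exceptional curve is pulled back from downstairs). Granting this, the projection formula gives $R^q\pi_*\mathcal O_Y(D)\simeq (R^q\pi_*\mathcal O_Y)\otimes\mathcal O_X(\pi_*D)$, rationality kills the higher direct images, and Leray finishes in one line. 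Your parts (i) and (ii) essentially unpack this descent fact by hand: the formal-functions computation (triviality of $\mathcal O_Y(D)|_E$ on the tree of rational curves, then bootstrapping through the thickenings $E^{(n)}$) is exactly the standard proof that $\mathcal O_Y(D)$ is formally trivial along $E$. What your route buys is self-containedness; what it costs is re-proving a quotable theorem, and two of your intermediate steps need more care than you give them.

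First, in the inductive step of (i) you need $H^1(E,\mathcal I_E^n/\mathcal I_E^{n+1})=0$, which by your own exact sequence for $\mathcal O_{E^{(n+1)}}$ is equivalent to surjectivity of $H^0(\mathcal O_{E^{(n+1)}})\to H^0(\mathcal O_{E^{(n)}})$; this does follow from rationality (surjective transition maps plus the formal-functions identifications of $\varprojlim H^0$ and $\varprojlim H^1$), but it is not implied by the $H^1$-vanishing alone, so spell it out. Second, and more seriously, the blanket claim in (ii) that $\pi_*\mathcal O_Y(D)$ is reflexive is false for a general divisor $D$: for the blow-up of a smooth surface point, $\pi_*\mathcal O_Y(-E)$ is the maximal ideal sheaf of the point, which is not reflexive. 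The claim is true under the standing hypothesis $(D.E_i)=0$, but that requires an argument. Either invoke the negativity lemma --- a section of $\mathcal O_Y(D)$ over $\pi^{-1}(U)\setminus E$ is a rational function $f$ with $\div f + D=\Delta+\sum a_iE_i$, $\Delta\ge 0$ without exceptional components, and intersecting with each $E_j$ gives $\sum_i a_i(E_i.E_j)=-(\Delta.E_j)\le 0$, whence $a_i\ge 0$ by negative definiteness and the section extends across $E$ --- or observe that once $\mathcal O_Y(D)\simeq\pi^*M$ locally (which your part (i) analysis already establishes), one has $\pi_*\mathcal O_Y(D)\simeq M\otimes\pi_*\mathcal O_Y=M$ invertible; but the latter observation collapses your proof back into the paper's.
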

		\begin{proof}
			Since the singularities of $X$ are rational, each $E_i$ is a smooth rational curve. The assumption on $D$ in the statement implies that $\pi_*D$ is Cartier\,\cite[Theorem~12.1]{Lipman:RationalSings}, and $\pi^*\mathcal O_X(\pi_*D) = \mathcal O_Y(D)$. By the projection formula, $R^p\pi_*\mathcal O_Y(D) \simeq R^p \pi_*( \mathcal O_Y \otimes \pi^* \mathcal O_X(\pi_*D) ) \simeq (R^p \pi_* \mathcal O_Y) \otimes \mathcal O_X(\pi_*D)$. Since $X$ is normal and has only rational singularities,
			\[
				R^p\pi_* \mathcal O_Y = \left\{
				\begin{array}{ll}
					\mathcal O_X & \text{if } p=0\\
					0 & \text{if } p > 0.
				\end{array}
				\right.
			\]
			Now, the claim is an immediate consequence of the Leray spectral sequence
			\[
				E_2^{p,q} = H^p(X,\, R^q\pi_*\mathcal O_Y \otimes \mathcal O_X(\pi_*D) )  \Rightarrow H^{p+q}(Y,\, \mathcal O_Y(D)). \qedhere
			\]
		\end{proof}
		\begin{lemma}\label{lem:Cohomologies_ofGeneralFiber_inY}
			Let $\pi \colon Y \to X$ be the contraction defined in Proposition~\ref{prop:SingularSurfaceX}. Then,
			\[
				h^0(X,\pi_*C_0) = 2,\quad h^1(X,\pi_*C_0)=1,\ \text{and}\quad h^2(X,\pi_*C_0)=0.
			\]
		\end{lemma}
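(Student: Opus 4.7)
The plan is to reduce the computation from $X$ to $Y$ via Proposition~\ref{prop:CohomologyComparison_YtoX} and then extract the three cohomologies from the standard short exact sequence of the divisor $C_0 \subset Y$. First I would note that because $C_0$ is a general (smooth) fiber of the elliptic fibration $Y \to \P^1$ and each of the curves $C_1, C_2, E_2, \ldots, E_r$ contracted by $\pi$ is a component of a special fiber, the intersection numbers $(C_0 . C_1)$, $(C_0 . C_2)$, and $(C_0 . E_i)$ all vanish. Proposition~\ref{prop:CohomologyComparison_YtoX} will then identify $H^p(X, \pi_*C_0)$ with $H^p(Y, \mathcal O_Y(C_0))$ for every $p \geq 0$, reducing the problem to a cohomology computation on the nonsingular surface $Y$.

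Next I would establish that $\mathcal O_Y(C_0)\bigl\vert_{C_0} \simeq \mathcal O_{C_0}$. This follows by choosing a second smooth fiber $C_0'$ linearly equivalent to $C_0$ and disjoint from it (the pencil $\lvert C_0 \rvert$ is base-point free after all the blow-ups): the restriction $\mathcal O_Y(C_0')\bigl\vert_{C_0}$ corresponds to the empty intersection divisor $C_0 \cap C_0'$, hence is trivial. Since $C_0$ is a smooth elliptic curve, this gives $h^0(\mathcal O_{C_0}) = h^1(\mathcal O_{C_0}) = 1$.

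Finally I would feed these data into the long exact sequence associated with $0 \to \mathcal O_Y \to \mathcal O_Y(C_0) \to \mathcal O_{C_0} \to 0$. Using $h^0(\mathcal O_Y) = 1$ and $h^1(\mathcal O_Y) = h^2(\mathcal O_Y) = 0$ (since $Y$ is a smooth rational surface), the sequence reads
\[
0 \to \C \to H^0(Y, \mathcal O_Y(C_0)) \to \C \to 0 \to H^1(Y, \mathcal O_Y(C_0)) \to \C \to 0 \to H^2(Y, \mathcal O_Y(C_0)) \to 0,
\]
from which one immediately reads $h^0 = 2$, $h^1 = 1$, $h^2 = 0$. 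No step presents a real obstacle; the whole content lies in Proposition~\ref{prop:CohomologyComparison_YtoX} and the triviality of the normal bundle of a smooth elliptic fiber.
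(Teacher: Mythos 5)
Your proof is correct. The first step—observing that $C_0$ is disjoint from all the contracted curves and invoking Proposition~\ref{prop:CohomologyComparison_YtoX} to reduce to $H^p(Y,\mathcal O_Y(C_0))$—is exactly what the paper does. Where you diverge is in the computation on $Y$: you use the restriction sequence $0 \to \mathcal O_Y \to \mathcal O_Y(C_0) \to \mathcal O_{C_0} \to 0$ together with the triviality of $\mathcal O_Y(C_0)\big\vert_{C_0}$ (justified by a disjoint linearly equivalent fiber) and the rationality of $Y$, which yields all three numbers at once from a single long exact sequence. The paper instead computes $\chi(C_0)=1$ by Riemann--Roch, kills $h^2(C_0)=h^0(K_Y-C_0)$ by Serre duality and an explicit reduction to a visibly non-effective divisor $-2C_2-(a_2+1)E_2-\cdots$, quotes $h^0(C_0)=2$ from the fact that $\lvert C_0\rvert$ defines the elliptic fibration, and then deduces $h^1$. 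Your route is arguably more self-contained: it derives $h^0=2$ rather than asserting it from the fibration, and it replaces the Serre-duality computation of $h^2$ with the vanishing $H^2(\mathcal O_Y)=0$ for the rational surface $Y$. Both arguments are standard and complete.
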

		\begin{proof}
			It is easy to see that $(C_0.C_1) = (C_0.C_2) = (C_0.E_2) =\ldots = (C_0.E_r) = 0$. Hence by Proposition~\ref{prop:CohomologyComparison_YtoX}, it suffices to compute $h^p(Y,C_0)$. Since $C_0^2 = (K_Y . C_0)=0$, the Riemann-Roch formula shows $\chi(C_0)=1$. By Serre duality, $h^2(C_0) = h^0(K_Y - C_0)$. In the short exact sequence
			\[
				0 \to \mathcal O_Y(K_Y - C_0 - E_1) \to \mathcal O_Y(K_Y -C_0) \to \mathcal O_{E_1} \otimes \mathcal O_Y(K_Y - C_0)\to 0,
			\]
			we find that $H^0(\mathcal O_{E_1} \otimes \mathcal O_Y(K_Y - C)) = 0$ since $(K_Y - C_0 \mathbin . E_1) = -1$. It follows that
			\[
				h^0(K_Y-C_0) = h^0(K_Y-C_0-E_1),
			\]
			but $K_Y - C_0 - E_1 = - C_0 - C_2 - E_2 - \ldots - E_{r+1}$ by Lemma~\ref{lem:CanonicalofY}. Hence $h^2(C_0)=0$. Since the complete linear system $\lvert C_0 \rvert$ defines the elliptic fibration $Y \to \P^1$, $h^0(C_0) = 2$. Furthermore, $h^1(C_0)=1$ follows from $h^0(C_0)=2$, $h^2(C_0)=0$, and $\chi(C_0)=1$.
		\end{proof}
		The following proposition, due to Manetti\,\cite{Manetti:NormalDegenerationOfPlane}, is a key ingredient of the proof of Theorem~\ref{thm:SmoothingX}
		\begin{proposition}[{\cite[Lemma~2]{Manetti:NormalDegenerationOfPlane}}]\label{prop:Manetti_PicLemma}
			Let $\mathcal X \to ( 0 \in T)$ be a smoothing of a normal surface $X$ with $H^1(\mathcal O_X) = H^2(\mathcal O_X)=0$. Then for every $t \in T$, the natural restriction map of the second cohomology groups $H^2(\mathcal X,\Z) \to H^2(\mathcal X_t,\Z)$ induces an injection $\Pic \mathcal X \to \Pic \mathcal X_t$. Furthermore, the restriction to the central fiber $\Pic \mathcal X \to \Pic X$ is an isomorphism.
		\end{proposition}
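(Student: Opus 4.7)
The plan is to deduce both assertions from the exponential sequence by reducing questions about line bundles to questions about second cohomology groups, and then comparing the latter across fibers by hand. I proceed in three steps.

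First, I would establish the relevant coherent vanishings. Applying cohomology and base change to the proper flat morphism $\varphi\colon \mathcal X\to T$, the hypothesis $H^1(\mathcal O_X)=H^2(\mathcal O_X)=0$ propagates to $R^i\varphi_*\mathcal O_{\mathcal X}=0$ for $i\geq 1$ in a neighborhood of $0\in T$; after shrinking $T$ this holds globally. Combined with $R^0\varphi_*\mathcal O_{\mathcal X}=\mathcal O_T$ and the Steinness of the curve germ $T$, the Leray spectral sequence yields $H^i(\mathcal X,\mathcal O_{\mathcal X})=0$ for $i\geq 1$, and semicontinuity forces $h^1(\mathcal O_{\mathcal X_t})=h^2(\mathcal O_{\mathcal X_t})=0$ on every fiber $\mathcal X_t$.

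Second, I would prove the isomorphism $\Pic\mathcal X\simeq \Pic X$. Since $\varphi$ is proper over a contractible germ and the singular locus of $\mathcal X$ is concentrated on the central fiber, Ehresmann's fibration theorem on the smooth locus combined with conical retractions near the finitely many isolated singularities exhibits $X$ as a deformation retract of $\mathcal X$; in particular $H^\ast(\mathcal X,\Z)\simeq H^\ast(X,\Z)$. Combining this with the first step, the exponential sequence on $\mathcal X$ and its analogue on $X$ yield a chain of isomorphisms $\Pic\mathcal X\simeq H^2(\mathcal X,\Z)\simeq H^2(X,\Z)\simeq \Pic X$, which settles the last assertion of the proposition.

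Third, for the injectivity of $\Pic\mathcal X\to\Pic\mathcal X_t$ I would compare the two exponential sequences in the commutative square
\[
\begin{array}{ccc}
\Pic\mathcal X & \xrightarrow{\sim} & H^2(\mathcal X,\Z)\\
\downarrow & & \downarrow\\
\Pic\mathcal X_t & \hookrightarrow & H^2(\mathcal X_t,\Z).
\end{array}
\]
The left vertical is the isomorphism just established, and the right $c_1$ is injective by $h^1(\mathcal O_{\mathcal X_t})=0$, so the question reduces to injectivity of the topological restriction $H^2(\mathcal X,\Z)\simeq H^2(X,\Z)\to H^2(\mathcal X_t,\Z)$. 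Identifying this map with the pullback along a specialization $sp\colon \mathcal X_t\to X$ that collapses each local Milnor fiber $M_i\subset\mathcal X_t$ onto the singular point $P_i\in X$, a Mayer--Vietoris comparison for the decompositions $X=X^{\mathrm{sm}}\cup(\bigsqcup N_i)$ (with $N_i$ a contractible conical neighborhood of $P_i$) and $\mathcal X_t=X^{\mathrm{sm}}\cup(\bigsqcup M_i)$ along the common boundaries $L_i=\partial N_i=\partial M_i$ exhibits $\ker(sp^\ast)$ as a subquotient of $\bigoplus_i H^1(L_i,\Z)$. Since each $(P_i\in X)$ is a $T_1$-singularity, $L_i$ is a lens space $L(n_i^2,n_ia_i-1)$; lens spaces are rational homology spheres, so $H^1(L_i,\Z)=0$ by the universal coefficient theorem. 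Hence $sp^\ast$, and consequently $\Pic\mathcal X\to\Pic\mathcal X_t$, is injective.

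The delicate part is the last step: setting up the Mayer--Vietoris ladder cleanly enough to identify $\ker(sp^\ast)$ with a subquotient of $\bigoplus_i H^1(L_i,\Z)$. Once that reduction is in place, the vanishing of $H^1$ for the lens-space links of $T_1$-singularities closes the argument.
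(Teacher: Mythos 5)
The paper offers no proof of this proposition --- it is quoted directly from Manetti \cite[Lemma~2]{Manetti:NormalDegenerationOfPlane} --- so there is no in-text argument to compare yours against; I can only judge your argument on its merits. Your Steps 1 and 2 are correct and standard: base change and semicontinuity give $R^i\varphi_*\mathcal O_{\mathcal X}=0$ for $i=1,2$ after shrinking $T$, hence $H^1(\mathcal O_{\mathcal X})=H^2(\mathcal O_{\mathcal X})=0$ and $h^1(\mathcal O_{\mathcal X_t})=h^2(\mathcal O_{\mathcal X_t})=0$ on nearby fibers, and the exponential sequences together with the deformation retraction of $\mathcal X$ onto $X$ identify $\Pic\mathcal X\simeq H^2(\mathcal X,\Z)\simeq H^2(X,\Z)\simeq\Pic X$ and reduce the injectivity of $\Pic\mathcal X\to\Pic\mathcal X_t$ to that of $H^2(X,\Z)\to H^2(\mathcal X_t,\Z)$. (One slip of wording: in your commutative square it is the top horizontal arrow, not the left vertical one, that is ``the isomorphism just established''; the left vertical arrow is precisely the map whose injectivity is at stake. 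The logic you actually use is the right one.)

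The real issue is the scope of Step 3. The proposition is stated for an \emph{arbitrary} normal surface $X$ with $H^1(\mathcal O_X)=H^2(\mathcal O_X)=0$, but your argument closes only because you assume each $(P_i\in X)$ is a $T_1$-singularity, so that each link $L_i$ is a lens space with finite $H_1$ and hence $H^1(L_i,\Z)=\Hom(H_1(L_i,\Z),\Z)=0$. Your bound on the kernel is correct --- indeed you do not even need the full Mayer--Vietoris ladder, since $\ker\bigl(H^2(X,\Z)\to H^2(X\setminus\{P_i\},\Z)\bigr)$ is already a quotient of $\bigoplus_i H^1(L_i,\Z)$ by the exact sequence of the pair $(N_i,L_i)$, and $\ker(sp^*)$ sits inside it --- but for a general normal surface singularity the link can have $b_1>0$ (e.g.\ a simple elliptic singularity, whose link is a circle bundle over an elliptic curve), so $H^1(L_i,\Z)\neq 0$ and this step does not conclude. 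What you have proved is therefore the proposition under the additional hypothesis that all links are rational homology spheres, which covers every application in this paper (where $X$ has only $T_1$-singularities) but is not a proof of the statement as written; for the general case one must fall back on Manetti's own argument.
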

		\begin{theorem}\label{thm:SmoothingX}
			Let $X$ be the projective normal surface defined in Proposition~\ref{prop:SingularSurfaceX}, and let $\varphi \colon \mathcal X \to (0 \in T)$ be a one parameter $\Q$-Gorenstein smoothing of $X$ over a smooth curve germ $(0 \in T)$. For a general point $0 \neq t_0 \in T$, the fiber $X^\gen := \mathcal X_{t_0}$ satisfies the following:
			\begin{enumerate}
				\item $p_g(X^\gen) = q(X^\gen) = 0$;
				\item $X^\gen$ is a simply connected, minimal, nonsingular surface with Kodaira dimension $1$;
				\item there exists an elliptic fibration $f^\gen \colon X^\gen \to \P^1$ such that $K_{X^\gen} \equiv C_0^\gen - \frac{1}{2} C_0^\gen - \frac{1}{n} C_0^\gen$, where $C_0^\gen$ is the general fiber of $f^\gen$. In particular, $X^\gen$ is isomorphic to the Dolgachev surface of type $(2,n)$.
			\end{enumerate}
		\end{theorem}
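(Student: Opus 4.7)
My plan is to address the four claims in the order (a), (c), (b), (d), which follows the deformation-theoretic dependencies rather than the statement order. Part (a) is a pure cohomology calculation, part (c) produces the geometric structure (the elliptic fibration and the canonical bundle formula on $X^\gen$), and then (b) and (d) follow by feeding (c) into standard elliptic surface theory together with cited results on $\Q$-Gorenstein smoothings of $T_1$-singularities.

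For (a), I would use constancy of $\chi(\mathcal{O})$ in the flat family $\varphi$ together with upper-semicontinuity of the individual $h^p(\mathcal{O})$. The vanishings in Proposition~\ref{prop:SingularSurfaceX}\ref{item:SingularSurfaceX_Cohomologies} yield $\chi(\mathcal{O}_X) = 1$, hence $\chi(\mathcal{O}_{X^\gen}) = 1$; upper-semicontinuity then forces $h^1(\mathcal{O}_{X^\gen}) = h^2(\mathcal{O}_{X^\gen}) = 0$, so $q(X^\gen) = p_g(X^\gen) = 0$.

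For (c), the intersection relations $(C_0 . C_i) = (C_0 . E_j) = 0$ exploited in Lemma~\ref{lem:Cohomologies_ofGeneralFiber_inY} imply that $\pi_* C_0$ is Cartier on $X$. By Proposition~\ref{prop:Manetti_PicLemma}, its class lifts uniquely to $\Pic \mathcal{X}$, and restriction to $X^\gen$ defines a divisor class $C_0^\gen$ with $(C_0^\gen)^2 = 0$ by constancy of intersection numbers in the family. I would then transfer the pencil $\lvert \pi_*C_0\rvert$ (which defines the elliptic fibration $Y \to \P^1$ via composition with $\pi$) along $\varphi$, using cohomology-and-base-change together with Lemma~\ref{lem:Cohomologies_ofGeneralFiber_inY} to conclude $h^0(X^\gen, C_0^\gen) = 2$ and base-point-freeness of the resulting pencil on $X^\gen$. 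The induced morphism $f^\gen \colon X^\gen \to \P^1$ has smooth elliptic general fiber by adjunction and Bertini. Finally, the canonical bundle formula is obtained by applying the isomorphism $\Pic \mathcal{X} \simeq \Pic X$ from Proposition~\ref{prop:Manetti_PicLemma} to lift the $\Q$-linear equivalence of Proposition~\ref{prop:SingularSurfaceX}(b) to $\mathcal{X}$, then restricting to $X^\gen$.

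For (b) and (d), nonsingularity is part of the notion of a smoothing. From (c), $K_{X^\gen} \equiv (\tfrac{1}{2} - \tfrac{1}{n}) C_0^\gen$ is nef and numerically nontrivial with $K_{X^\gen}^2 = 0$, so $X^\gen$ is minimal and $\kappa(X^\gen) = 1$. The multiplicities of $f^\gen$ are identified by matching the derived decomposition $K_{X^\gen} \equiv -C_0^\gen + \tfrac{1}{2} C_0^\gen + \tfrac{n-1}{n} C_0^\gen$ against the canonical bundle formula for a minimal elliptic surface with $p_g = q = 0$ over $\P^1$, namely $K \equiv -C_0^\gen + \sum \tfrac{m_i - 1}{m_i} F_i$; uniqueness of this decomposition (with $m_i \geq 2$, and each $T_1$-singularity producing precisely one multiple fiber of the corresponding multiplicity through the smoothing) singles out two multiple fibers of multiplicities $2$ and $n$. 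The main obstacle I anticipate is simple connectedness; I would cite \cite[Thm.~2]{LeePark:SimplyConnected}, whose proof applies van Kampen to the decomposition of $X^\gen$ as the smooth locus of $X$ glued to the Milnor fibers of the two $T_1$-singularities (each a rational homology ball whose cyclic $\pi_1$ is killed by the boundary relations coming from the simply-connected smooth locus).
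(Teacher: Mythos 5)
Your overall architecture matches the paper's: (a) by semicontinuity from Proposition~\ref{prop:SingularSurfaceX}\ref{item:SingularSurfaceX_Cohomologies}, (c) by lifting $\pi_*C_0$ and $2nK_X=(n-2)\pi_*C_0$ through $\Pic X\simeq\Pic\mathcal X\hookrightarrow\Pic X^\gen$, simple connectedness from \cite{LeePark:SimplyConnected}, and (d) by matching against the canonical bundle formula. But there is a genuine gap in your part (c), at the step ``cohomology-and-base-change together with Lemma~\ref{lem:Cohomologies_ofGeneralFiber_inY} to conclude $h^0(X^\gen,C_0^\gen)=2$ and base-point-freeness.'' Lemma~\ref{lem:Cohomologies_ofGeneralFiber_inY} gives $h^0(X,\pi_*C_0)=2$ but also $h^1(X,\pi_*C_0)=1$. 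Because this $h^1$ does not vanish on the central fiber, cohomology and base change does not force $h^0$ to be constant: semicontinuity and invariance of $\chi=1$ only give $h^0(C_0^\gen)\leq 2$ and leave open the degeneration $(h^0,h^1)=(2,1)\rightsquigarrow(1,0)$ on the general fiber. If $h^0(C_0^\gen)=1$ there is no pencil, and the whole of (c) (hence the identification of $C_0^\gen$ as a fiber of $f^\gen$) collapses. A lower bound $h^0(C_0^\gen)\geq 2$ requires an independent input. The paper supplies it by going outside $\Pic X$: the Weil divisor classes $\pi_*E_1$ and $\pi_*E_{r+1}$ deform to divisors $E_1^\gen$, $E_{r+1}^\gen$ on $X^\gen$ (via the specialization map of Section~\ref{subsec:TopologyofX}), and $2E_1^\gen$, $nE_{r+1}^\gen$ are two \emph{distinct} effective members of $\lvert C_0^\gen\rvert$ since $\pi_*(2E_1)=\pi_*C_0=\pi_*(nE_{r+1})$. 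The same two members also give base-point-freeness: a base point would force $(2E_1^\gen.\,nE_{r+1}^\gen)=(C_0^\gen)^2>0$, contradicting $(C_0^\gen)^2=0$. Your proposal contains no substitute for this construction.

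A secondary, smaller issue is in (d): ``uniqueness of this decomposition'' is asserted, not proved. The paper first invokes \cite[Chapter~2]{Dolgachev:AlgebraicSurfaces} to know that a simply connected minimal properly elliptic surface with $p_g=q=0$ has exactly two multiple fibers with coprime multiplicities $p<q$, and then solves $\frac1p+\frac1q=\frac12+\frac1n$ by the short case analysis ($p>2$ forces $q<6$, leaving only $(3,4,12)$ and $(3,5,30)$, both excluded since $n$ is odd). Your parenthetical claim that each $T_1$-singularity produces precisely one multiple fiber of the corresponding multiplicity is true but is not established at this point in the paper (it is deferred to Example~\ref{eg:DivisorVaries_onSingular}), so it cannot be used as the justification here without further argument.
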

		\begin{proof}\ 
			\begin{enumerate}[ref=(\normalfont\alph{enumi})]
				\item This follows from Proposition~\ref{prop:SingularSurfaceX}\ref{item:SingularSurfaceX_Cohomologies} and the upper-semicontinuity of $h^p$.
				\item Shrinking $(0 \in T)$ if necessary, we may assume that $X^\gen$ is simply connected\,\cite[p.~499]{LeePark:SimplyConnected}, and that $K_{X^\gen}$ is nef\,\cite[\textsection5.d]{Nakayama:ZariskiDecomposition}. If $K_{X^\gen}$ is numerically trivial, then $X^\gen$ must be an Enriques surface by the classification theory of surfaces. This violates the simple connectivity of $X^\gen$. It follows that $K_{X^\gen}$ is not numerically trivial, and the Kodaira dimension of $X^\gen$ is $1$.
				\item %

				Since the divisor $\pi_*C_0$ is not supported on the singular points of $X$, $\pi_* C_0 \in \Pic X$. By Proposition~\ref{prop:Manetti_PicLemma}, $\Pic X \simeq \Pic \mathcal X \hookrightarrow \Pic X^\gen$. Let $C_0^\gen\in \Pic X^\gen$ be the image of $\pi_*C_0$ under this correspondence. By \cite[Theorem~4.2]{Kawamata:Moderate}, there exists a smooth complex surface $B$ such that the morphism $\varphi$ factors through $g \colon \mathcal X \to B$ and the general fiber of $g$ is an elliptic curves. In particular, the complete linear system $\lvert C_0^\gen\rvert$ defines the elliptic fibration $f^\gen \colon X^\gen \to \P^1$.
				Since $\mathcal X / (0 \in T)$ is a $\Q$-Gorenstein deformation, the map $\Pic X \hookrightarrow \Pic X^\gen$ in Proposition~\ref{prop:Manetti_PicLemma} maps $2nK_X - (n-2) \pi_* C_0$ to $2nK_{X^\gen} - (n-2) C_0^\gen$. Furthermore, $2nK_X - (n-2)\pi_*C_0 \in \Pic X$ is zero, so
				\[
					K_{X^\gen} \equiv C_0^\gen - \frac{1}{2}C_0^\gen - \frac{1}{n}C_0^\gen.
				\]
				
				By \cite[Chapter~2]{Dolgachev:AlgebraicSurfaces}, every minimal simply connected nonsingular surface with $p_g=q=0$ and of Kodaira dimension $1$ has exactly two multiple fibers with coprime multiplicities. Thus, there exist coprime integers $q > p > 0$ such that $X^\gen \simeq X_{p,q}$ where $X_{p,q}$ is a Dolgachev surface of type $(p,q)$. The canonical bundle formula says that $K_{X_{p,q}} \equiv C_0^\gen - \frac 1p C_0^\gen - \frac 1q C_0^\gen$. Since $X^\gen \simeq X_{p,q}$, this leads to the equality
				\[
					\frac 12 + \frac 1n = \frac 1p + \frac 1q.
				\]
				Assume $2 < p < q$. Then, $\frac 12 < \frac 12 + \frac 1n  = \frac 1p + \frac 1q \leq \frac 13 + \frac 1q$. Hence, $q < 6$. Only the possible candidates are $(p,q,n) = (3,4,12)$, $(3,5,30)$, but all of these cases violate $\op{gcd}(2,n) = 1$. It follows that $p=2$ and $q = n$.\qedhere
			\end{enumerate}
		\end{proof}
		\begin{remark}
			Theorem~\ref{thm:SmoothingX} generalizes to the construction of Dolgachev surfaces of type $(m,n)$ for any coprime integers $n>m>0$. Indeed, we shall describe the multiple fiber of multiplicity $n$ associated to the Weil divisor $\pi_* E_{r+1}$. The precise meaning of this sentence will be explained in the next section\,(see Example~\ref{eg:MultipleFibers}). If we perform more blow ups to the $C_1\cup E_1$-fiber so that $X$ contains a $T_1$-singularity of type $\bigl( 0 \in \A^2 \big / \frac{1}{m^2}(1,mb-1)\bigr)$, then the surface $X^\gen$ has two multiple fibers of multiplicities $m$ and $n$. Thus, $X^\gen$ is a Dolgachev surface of type $(m,n)$.
		\end{remark}
	\section{Exceptional vector bundles on Dolgachev surfaces}\label{sec:ExcepBundleOnX^g}
		In general, it is hard to understand how information of the central fiber is carried to the general fiber along the $\Q$-Gorenstein smoothing. Looking at the topology near the singularities of $X$, one can get a clue to relate information between $X$ and $X^\gen$. This section essentially follows the idea of Hacking. Some ingredients of Hacking's method, which are necessary for our application, are included in the appendix(Section~\ref{sec:Appendix}). Readers who want to look up the details are recommended to consult Hacking's original paper\,\cite{Hacking:ExceptionalVectorBundle}. 
	\subsection{Topology of the singularities of $X$}\label{subsec:TopologyofX}
		Let $L_i \subseteq X$\,($i=1,2$) be the link of the singularity $P_i$. Then, $H_1(L_1,\Z) \simeq \Z/4\Z$ and $H_1(L_2,\Z) \simeq \Z/n^2\Z$\,({\it cf.} \cite[Proposition~13]{Manetti:NormalDegenerationOfPlane}). Since $\op{gcd}(2,n)=1$, $H_1(L_1,\Z) \oplus H_1(L_2,\Z) \simeq \Z/ 4n^2 \Z$ is a finite cyclic group. By \cite[p.~1191]{Hacking:ExceptionalVectorBundle}, $H_2(X,\Z) \to H_1(L_i,\Z)$ is surjective for each $i=1,2$, thus the natural map 
		\[
			H_2(X,\Z) \to H_1(L_1,\Z) \oplus H_1(L_2,\Z),\quad \alpha \mapsto ( \alpha \cap L_1 ,\, \alpha \cap L_2)
		\]
		is surjective. We have further information on groups the $H_1(L_i,\Z)$.
		\begin{theorem}[{\cite{Mumford:TopologyOfNormalSurfaceSingularity}}]\label{thm:MumfordTopologyOfLink}
			Let $X$ be a projective normal surface containing a $T_1$-singularity $P \in X$. Let $f \colon \widetilde X \to X$ be a good resolution ({\it i.e.} the exceptional divisor is simple normal crossing) of the singularity $P$, and let $E_1,\ldots,E_r$ be the integral exceptional divisors ordered in such a way that $(E_i . E_{i+1}) = 1$ for each $i=1,\ldots,r-1$. Let $\widetilde L\subseteq \widetilde X$ be the plumbing fixture (see Figure~\ref{fig:PlumbingFixture}) around $\bigcup E_i$, and let $\alpha_i \subset \widetilde L$ be the loop around $E_i$ oriented suitably. Then the following statements are true.
			\begin{enumerate}
				\item The group $H_1(\widetilde L,\Z)$ is generated by the loops $\alpha_i$. The relations are
				\[
					\sum_j (E_i . E_j) \alpha_j = 0,\quad i=1,\,\ldots,\,r.
				\]
				\item Let $L \subset X$ be the link of the singularity $P \in X$. Then, $\widetilde L$ is homeomorphic to $L$.
			\end{enumerate}
		\begin{figure}[h!]
			\centering
			\begin{tikzpicture}[scale=1]
				\draw(0,0)	node[anchor=center] {\includegraphics[scale=1, bb = -15 -10 268 98]{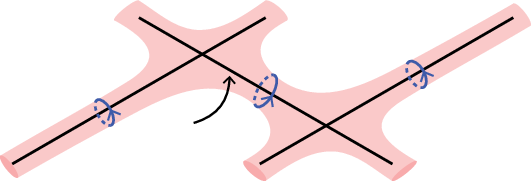}};
				\draw(0.5,0.5)	node[anchor=center] {\color{blue}{$\alpha_i$}};
				\draw(-1.4,-0.5) node[anchor=center] {$E_i$};
			\end{tikzpicture}\vskip-15pt
			\caption{Plumbing fixture around $\bigcup E_i$.}\label{fig:PlumbingFixture}
		\end{figure}
		\end{theorem}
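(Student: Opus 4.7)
The plan is to handle the two parts separately, since they use quite different techniques: part (b) is a general feature of resolutions of normal surface singularities, while part (a) is a combinatorial computation from the plumbing description of $\widetilde L$.

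For part (b), I would exploit that $f \colon \widetilde X \to X$ is proper and restricts to a biholomorphism $\widetilde X \setminus f^{-1}(P) \to X \setminus \{P\}$. Choose a small Stein neighborhood $U$ of $P$ so that its boundary is a smooth compact 3-manifold, which is by definition the link $L$. Its preimage $\widetilde U = f^{-1}(U)$ is a neighborhood of the exceptional curve $\bigcup E_i$, and for $U$ small enough $\widetilde U$ deformation retracts onto $\bigcup E_i$; after possibly shrinking, $\widetilde U$ coincides with the interior of the plumbing fixture, so $\partial \widetilde U$ is identified with $\widetilde L$. The map $f$ restricts to a homeomorphism $\partial \widetilde U \to \partial U = L$ because it is already a diffeomorphism on the complement of the exceptional set and the two boundaries are compact.

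For part (a), I would decompose $\widetilde L$ as a union of pieces $N_1, \ldots, N_r$, where $N_i$ is the portion of the $S^1$-bundle over $E_i \cong \P^1$ lying away from small neighborhoods of each intersection point $E_i \cap E_{i\pm 1}$. Each $N_i$ is an $S^1$-bundle over a sphere with (at most two) punctures, so it is homotopy equivalent to a product of $S^1$ with a wedge of arcs, and $H_1(N_i)$ is the free abelian group on the fiber class $\alpha_i$ and the boundary circles of the punctures. Under the plumbing gluing, the boundary circle around $E_i \cap E_j$ inside $N_i$ is identified with the fiber class $\alpha_j$ of $N_j$. A Mayer--Vietoris argument, or an iterated application of the relative long exact sequence assembled along the chain $E_1, \ldots, E_r$, then shows that the $\alpha_i$ generate $H_1(\widetilde L, \Z)$. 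The key relation inside each $N_i$ is that a meridian disk of the normal bundle has boundary $\alpha_i$, and patching such disks over $E_i$ with prescribed Euler number $E_i^2$ shows that $E_i^2 \alpha_i + \sum_{j \neq i}(E_i.E_j)\alpha_j = 0$, which is precisely the $i$-th relation after accounting for the plumbing identifications.

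The main obstacle will be the orientation bookkeeping that ensures the relation matrix is literally the intersection matrix $(E_i.E_j)$, rather than a signed variant. Choosing the orientation of each $\alpha_i$ compatibly with the complex orientation of the normal bundle of $E_i$, and checking that the plumbing maps respect these orientations via the complex structure, should make this bookkeeping come out cleanly. A secondary subtlety is verifying that \emph{no} further relations arise: this follows from a rank count once the homology of each $N_i$ has been identified, since the intersection matrix on the $E_i$ is nondegenerate and the Euler characteristic of $\widetilde L$ vanishes.
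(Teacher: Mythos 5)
The paper offers no proof of this statement: it is quoted directly from Mumford's paper on the topology of normal surface singularities, so there is nothing internal to compare against. Your outline is essentially the classical argument, and both halves are structured correctly: part (b) via a small Stein (or contractible) neighborhood $U$ of $P$ whose boundary is the link, pulled back to a neighborhood of the exceptional curve; part (a) via the plumbing decomposition into circle bundles over punctured spheres, with the Euler-number relation in each piece and the fiber/section swap at each plumbing point producing exactly the rows of the intersection matrix.

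Two points deserve tightening. First, in part (b) the assertion that ``after possibly shrinking, $\widetilde U$ coincides with the interior of the plumbing fixture'' is not literally true; $f^{-1}(U)$ is merely \emph{some} neighborhood of $\bigcup E_i$, and you need uniqueness of regular (tubular) neighborhoods to conclude that its boundary is homeomorphic to the boundary of the plumbing fixture. Second, and more substantively, your argument that no further relations arise is not sound as written: the Euler characteristic of $\widetilde L$ vanishes because it is a closed odd-dimensional manifold, so it carries no information, and nondegeneracy of $\bigl((E_i.E_j)\bigr)$ only tells you the \emph{presented} group is finite, not that $H_1(\widetilde L,\Z)$ is no smaller. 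The clean fix is either to write out the Mayer--Vietoris (or van Kampen) sequence in full, which terminates so as to deliver precisely this presentation, or to bypass the 3-manifold computation altogether: let $\widetilde P$ be the plumbed disk-bundle 4-manifold with $\partial\widetilde P = \widetilde L$; since the $E_i$ are spheres and the dual graph is a tree, $H_1(\widetilde P)=0$ and $H_2(\widetilde P)\simeq \Z^r$, and the exact sequence of the pair combined with Poincar\'e--Lefschetz duality $H_2(\widetilde P,\widetilde L)\simeq H^2(\widetilde P)$ exhibits $H_1(\widetilde L,\Z)$ as the cokernel of the intersection matrix, which is exactly statement (a).
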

		Proposition~\ref{prop:Manetti_PicLemma} provides a way to associate a Cartier divisor on $X$ with a Cartier divisor on $X^\gen$.  This association can be extended as the following proposition illustrates.
		\begin{proposition}[{{\it cf.} \cite[Lemma~5.5]{Hacking:ExceptionalVectorBundle}}]\label{prop:Hacking_Specialization}
			Let $X$ be a projective normal surface, and let $(P \in X)$ be a $T_1$-singularity of type $\bigl( 0 \in \A^2 \big/ \frac{1}{n^2}(1,na-1)\bigr)$. Suppose $X$ admits a $\Q$-Gorenstein deformation $\mathcal X/(0 \in T)$ over a smooth curve germ $(0 \in T)$ such that $\mathcal X / (0 \in T)$ is a smoothing of $(P \in X)$, and is locally trivial outside $(P \in X)$. Let $X^\gen$ be the general fiber of $\mathcal X \to (0 \in T)$, and let $\mathcal B \subset \mathcal X$ be a sufficiently small open ball around $P \in \mathcal X$. Then the link $L$ and the Milnor fiber $M$ of $(P \in X)$ given as follows:
			\[
				L = \partial \mathcal B \cap X^\gen,\qquad M = \mathcal B \cap X^\gen.
			\]
			In addition, let $B := \mathcal B \cap X$ be the contractible space. Then, the relative homology sequence for the pair $(X^\gen, M)$ yields the exact sequence
			\[
				0 \to H_2(X^\gen,\Z) \to H_2(X,\Z) \to H_1(M,\Z).
			\]
			Furthermore, a class in $H_2(X,\Z)$ lifts to a class in $H_2(X^\gen,\Z)$ if and only if its image under the map $H_2(X,\Z) \to H_1(L,\Z)$ is divisible by $n$.
        \end{proposition}
        \begin{proof}
			We have a sequence of isomorphisms
			\[
				H_2(X^\gen,M) \simeq H_2(X^\gen \setminus M , \partial M) \simeq H_2(X\setminus B , \partial B) \simeq H_2(X,B) \simeq H_2(X).
			\]
			where the first and the third ones are the excisions, the second one is due to the topological description $X^\gen = (X \setminus B) \cup M$\,(\cite[p.~39]{Manetti:ModuliOfDiffeo}), and the last one is due to the contractibility of $B$. The relative homology sequence for the pair $(X^\gen, M)$ gives
			\[
				0 \to H_2(X^\gen) \to H_2(X^\gen,M) \simeq H_2(X) \to H_1(M).
			\]
			The map in the right is the composition $H_2(X) \to H_1(L) \to H_1(M)$, where $H_1(L) \to H_1(M)$ is the natural surjection $\Z/n^2 \Z \to \Z/n\Z$\,(\cite[Lemma~2.1]{Hacking:ExceptionalVectorBundle}). The last assertion follows immediately.
        \end{proof}
       
		Recall that $Y$ is the rational elliptic surface constructed in Section~\ref{sec:Construction}, and $\pi \colon Y \to X$ is the contraction of $C_1,\,C_2,\,E_2,\,\ldots,\,E_r$. Proposition~\ref{prop:Hacking_Specialization} gives the short exact sequence
		\begin{equation}
			0 \to H_2(X^\gen,\Z) \to H_2(X,\Z) \to H_1(M_1,\Z) \oplus H_1(M_2,\Z), \label{eq:SpecializationSequence}
		\end{equation}
		where $M_1$\,(resp. $M_2$) is the Milnor fiber of the smoothing of $(P_1 \in X)$\,(resp. $(P_2 \in X)$). In this case $H_2(X,\Z) \to H_1(L_1,\Z)\oplus H_1(L_2,\Z)$ is described as follows. If $D \in \Pic Y$, then $[\pi_*D] \in H_2(X,\Z)$ maps to
		\[
			\bigl( (D.C_1) \alpha_{C_1},\ (D.C_2) \alpha_{C_2} + (D.E_2) \alpha_{E_2} + \ldots + (D.E_r) \alpha_{E_r}\bigr).
		\]
		Suppose $D \in \Pic Y$ is a divisor such that $(D. C_1) \in 2\Z$, $(D. C_2) \in n\Z$, and $(D.E_2) = \ldots = (D.E_r) = 0$. Then, Theorem~\ref{thm:MumfordTopologyOfLink} and (\ref{eq:SpecializationSequence}) imply that the cycle $[\pi_* D] \in H_2(X,\Z)$ maps to the zero element of $H_1(M_1,\Z) \oplus H_1(M_2,\Z)$. In particular, there is a cycle in $H_2(X^\gen)$, which maps to $[\pi_* D]$. Since $X^\gen$ is a nonsingular surface with $p_g = q = 0$, the first Chern class map and Poincar\'e duality induce the isomorphisms $\Pic X^\gen \simeq H^2(X^\gen,\Z) \simeq H_2(X^\gen,\Z)$\,(\cite[Proposition~4.11]{Kollar:Seifert}). We take the divisor $D^\gen \in \Pic X^\gen$ corresponding to $[\pi_* D] \in H_2(X,\Z)$. More detailed description of $D^\gen$ will be presented in \textsection\ref{subsec:HackingConstr}. We remark that even if $\pi_*D$ is an effective divisor, it does not necessarily mean that the resulting divisor $D^\gen$ is effective.
		\begin{example}\label{eg:MultipleFibers}
			If $D = E_{r+1}$, then $[\pi_*E_{r+1}] \in H_2(X,\Z)$ maps to $(0, \alpha_{E_r} + \alpha_{E_s} ) \in H_1(L_1,\Z) \oplus H_1(L_2,\Z)$, where $E_s$ is the other end component of the chain $\{C_2,E_2,\ldots,E_r\}$. It can be easily shown that either $\alpha_{E_s} = (na-1) \alpha_{E_r}$ or $\alpha_{E_r} = (na-1) \alpha_{E_s}$. In both cases, $\alpha_{E_r} + \alpha_{E_s}$ maps to the zero cycle along $H_1(L_2,\Z) \to H_1(M_2,\Z)$. It follows that $[\pi_*E_{r+1}] \in H_2(X,\Z)$ admits a preimage $E_{r+1}^\gen$ in $H_2(X^\gen,\Z) \simeq \Pic X^\gen$. By (\ref{eq:EquationOnFiber}) and Proposition~\ref{prop:SingIndexAndFiberCoefficients}, there are integers $a_2,\ldots,a_r \in \Z$ such that
			\[
				C_0 = C_2 + a_2 E_2 + \ldots + a_r E_r + n E_{r+1}.
			\]
			This leads to $\pi_* C_0 = \pi_* ( C_2 + a_2 E_2 + \ldots + a_r E_r + n E_{r+1}) = \pi_* ( n E_{r+1})$. Since $\mathcal X/(0 \in T)$ is a $\Q$-Gorenstein deformation, $\pi_*( (n-2) E_{r+1} ) \equiv \frac{n-2}{n}\pi_* C_0 \equiv 2K_X$\,(Proposition~\ref{prop:SingularSurfaceX}) induces $(n-2)E_{r+1}^\gen = 2K_{X^\gen}$. The same argument says that there exists $E_1^\gen \in \Pic X^\gen$ with $(n-2)E_1^\gen = nK_{X^\gen}$. In particular, we find that both $2E_1^\gen$ and $nE_{r+1}^\gen$ are $\Q$-linearly equivalent to $\frac{2n}{n-2}K_{X^\gen}$, which is again $\Q$-linearly equivalent to the general fiber $C_0^\gen$.
		\end{example}
		
	The next proposition explains the way to find a preimage along the surjective map $H_2(X,\Z) \to H_1(L_1,\Z)\oplus H_1(L_2,\Z)$.
	\begin{proposition}\label{prop:DesiredDivisorOnY}
		Let $X$ be a projective normal surface with a cyclic quotient singularity $(P \in X)$, let $\pi \colon Y \to X$ be a resolution of $P \in X$, and let $E_1,\ldots,E_r \subset Y$ be the exceptional divisors over $P$. The first homology group of the link $L$ has the following presentation
		\[
			\bigl\langle \alpha_1,\ldots,\alpha_r : \sum_{j=1}^r (E_i . E_j) \alpha_j = 0,\ i=1,\ldots,r \bigr\rangle.
		\]
		Let $D$ be a divisor on $Y$, and let $\ell_1,\ldots,\ell_r$ be integers satisfying
		\[
			[\pi_* D] \cap L = \ell_1 \alpha_1 + \ldots + \ell_r \alpha_r
		\]
		in $H_1(L)$. Then there are integers $e_1,\ldots,e_r$ such that $D' := D + \sum_{j=1}^r e_jE_j $ satisfies $(D'.E_i) = \ell_i$ for each $i=1,\ldots,r$.
	\end{proposition}
	\begin{proof}
		Consider the free abelian group $\bigoplus_{j=1}^r \Z \cdot \tilde{\alpha}_j$ and the homomorphism
		\[
			\bigoplus_{j=1}^r \Z \cdot \tilde\alpha_j \to H_1(L),\quad \tilde\alpha_j \mapsto \alpha_j.
		\]
		This map is clearly surjective. By Theorem~\ref{thm:MumfordTopologyOfLink}, the kernel is the abelian group generated by
		\[
			\Bigl\{ R_i := \sum_{j=1}^r(E_i . E_j) \alpha_j : i=1,\ldots,r\Bigr\}.
		\]
		Since $[\pi_*D] \cap L = \sum_{i=1}^r ( D. E_j) \alpha_j$, the equality $[\pi_*D] \cap L = \sum_{j=1}^r \ell_j \alpha_j$ implies that there are integers $e_1,\ldots,e_r$ such that $\sum_{j=1}^r \ell_j \tilde\alpha_j - \sum_{j=1}^r ( D.E_j) \tilde \alpha_j = e_1 R_1 + \ldots + e_r R_r$. This leads to
		\begin{align*}
			\sum_{j=1}^r \ell_j \tilde \alpha_j &= \sum_{j=1}^r(D.E_j) \tilde \alpha_j + \sum_{i=1}^r \sum_{j=1}^r (e_i E_i . E_j) \tilde \alpha_j \\
			&= \sum_{j=1}^r ( D + e_1 E_1 + \ldots + e_r E_r \mathbin. G_j) \tilde\alpha_j.
		\end{align*}
		Taking $D' = D + e_1 E_1 + \ldots + E_r$, we get $(D'.E_i)=\ell_i$ for each $i=1,\ldots,r$.
	\end{proof}
	\subsection{Exceptional vector bundles on $X^\gen$}\label{subsec:HackingConstr}
		We keep the notations in Section~\ref{sec:Construction}, namely, $Y$ is the rational elliptic surface\,(Figure~\ref{fig:Configuration_General}), $\pi \colon Y \to X$ is the contraction in Proposition~\ref{prop:SingularSurfaceX}.
		Let $(0 \in T)$ be the base space of the versal deformation ${\mathcal X^{\rm ver} / (0 \in T)}$ of $X$, and let $(0 \in T_i)$ be the base space of the versal deformation $(P_i \in \mathcal X^{\rm ver}) / (0 \in T_i)$ of the singularity $(P_i \in X)$. By Lemma~\ref{lem:NoObstruction} and \cite[Lemma~7.2]{Hacking:ExceptionalVectorBundle}, there exists a smooth morphism
		\[
			\mathfrak T \colon (0 \in T) \to \textstyle\prod_i (0 \in T_i).
		\]
		For each $i=1,2$, take the base extensions $( 0 \in T_i') \to (0 \in T_i)$ to which Proposition~\ref{prop:HackingWtdBlup} can be applied. Then, there exists a Cartesian diagram
		\[
			\begin{xy}
				(0,0)*+{(0 \in T)}="00";
				(30,0)*+{\textstyle \prod_i ( 0 \in T_i)}="10";
				(0,15)*+{(0 \in T')}="01";
				"10"+"01"-"00"*+{\textstyle \prod_i ( 0 \in T_i')}="11";
				{\ar^(0.44){\mathfrak T}	"00";"10"};
				{\ar^(0.44){\mathfrak T'}	"01";"11"};
				{\ar		"01";"00"};
				{\ar		"11";"10"};
			\end{xy}.
		\]
		Let $\mathcal X' / (0 \in T')$ be the deformation obtained by pulling back $\mathcal X^{\rm ver} / ( 0 \in T)$ along $(0 \in T') \to (0 \in T)$. By Proposition~\ref{prop:HackingWtdBlup}, there exists a proper birational map $\Phi \colon \tilde{\mathcal X} \to \mathcal X'$ such that the central fiber $\tilde {\mathcal X}_0 = \Phi^{-1}(\mathcal X_0')$ is the union of three irreducible components $\tilde X_0$, $W_1$, $W_2$, where $\tilde X_0$ is the proper transform of $X = \mathcal X_0'$, and $W_1$\,(resp. $W_2$) is the exceptional locus over $P_1$\,(resp. $P_2$). The intersection $Z_i := \tilde X_0 \cap W_i$\,($i=1,2$) is a smooth rational curve.
				
		From now on, assume $a=1$. This is the case in which the resolution graph of the singular point $P_2 \in X$ forms the chain $C_2,\,E_2,\,\ldots,\,E_r$ in this order. Indeed, the resolution graph of a cyclic quotient singularity $\bigl(0 \in \A^2 / \frac{1}{n^2}(1,n-1)\bigr)$ is \!\!\!\!\!%
		\raisebox{-11pt}[0pt][13pt]{
			\begin{tikzpicture}
				\draw(0,0) node[anchor=center] (E1) {};
				\draw(30pt,0pt) node[anchor=center] (E2) {};
				\draw(60pt,0pt) node[anchor=center, inner sep=10pt] (E3) {};
				\draw(90pt,0pt) node[anchor=center] (E4) {};
				\fill[black] (E1) circle (1.5pt);
				\fill[black] (E2) circle (1.5pt);
				\draw (E3)	node[anchor=center]{$\cdots$};
				\fill[black] (E4) circle (1.5pt);
				\node[black,below,shift=(90:2pt)] at (E1.south) {$\scriptscriptstyle -(n+2)\ $};
				\node[black,below,shift=(90:2pt)] at (E2.south) {$\scriptscriptstyle -2$};
				\node[black,below,shift=(90:2pt)] at (E4.south) {$\scriptscriptstyle -2$};
				\draw[-] (E1.east) -- (E2.west);
				\draw[-] (E2.east) -- (E3.west);
				\draw[-] (E3.east) -- (E4.west);
			\end{tikzpicture} }\!\!($(n-1)$ vertices). %
		Let $\iota \colon Y \to \tilde X_0$ be the contraction of $E_2,\ldots,E_r$\,(see Proposition~\ref{prop:HackingWtdBlup}\ref{item:prop:HackingWtdBlup}). As noted in Remark~\ref{rmk:SimplestSingularCase}, $W_1$ is isomorphic to $\P^2$, $Z_1$ is a smooth conic in $W_1$, hence $\mathcal O_{W_1}(1)\big\vert_{Z_1} = \mathcal O_{Z_1}(2)$. Also,
		\begin{equation}
			W_2 \simeq \P_{x,y,z}(1,n-1,1),\quad Z_2 = (xy=z^n) \subset W_2,\quad \text{and}\quad \mathcal O_{W_2}(n-1)\big\vert_{Z_2} = \mathcal O_{Z_2}(n). \label{eq:SecondWtdBlowupExceptional}
		\end{equation}
		The last equality can be verified as follows: let $h_{W_2} = c_1(\mathcal O_{W_2}(1))$, then $(n-1)h_{W_2}^2 = 1$, so \linebreak$\bigl( c_1(\mathcal O_{W_2}(n-1)) \mathbin . Z_2\bigr) = \bigl( (n-1)h_{W_2} \mathbin. nh_{W_2} \bigr) = n$.

		In what follows, we construct exceptional vector bundles on the reducible surface $\tilde{\mathcal X_0} = \tilde X_0 \cup W_1 \cup W_2$ by gluing suitable vector bundles on each irreducible component which have isomorphic restrictions to the intersection curves $Z_i$. 
		\begin{proposition}\label{prop:VectorBundleOnReducibleSurface}
			Let $D \in \Pic Y$ be a divisor such that $(D.E_i) = 0$ for $i=2,\ldots,r$.
			\begin{enumerate}
				\item Assume that $(D.C_1) =2d_1 \in 2\Z$, $(D.C_2) = nd_2\in n\Z$. Then, there exists a line bundle $\tilde {\mathcal D}$ on the reducible surface $\tilde{\mathcal X}_0 = \tilde X_0 \cup W_1 \cup W_2$ satisfying
			\[
				\tilde{\mathcal D}\big\vert_{\tilde X_0} \simeq \mathcal O_{\tilde X_0}(\iota_* D),\quad%
				\tilde{\mathcal D}\big\vert_{W_1} \simeq \mathcal O_{W_1}(d_1),\quad\text{and}\quad%
				\tilde{\mathcal D}\big\vert_{W_2} \simeq \mathcal O_{W_2}((n-1)d_2).
			\]
				\item Assume that $(D.C_1) = 1$, $(D.C_2) = 0$, and that there exists an exceptional vector bundle $G_1$ of rank $2$ on $W_1$ such that $G_1\big\vert_{Z_1} \simeq \mathcal O_{Z_1}(1)^{\oplus 2}$. Then, there exists a vector bundle $\tilde{\mathcal V}_1$ on $\tilde{\mathcal X}_0$ satisfying
				\[
					\tilde{\mathcal V}_1 \big\vert_{\tilde X_0} \simeq \mathcal O_{\tilde X_0}(\iota_* D)^{\oplus 2},\quad%
					\tilde{\mathcal V}_1 \big\vert_{W_1} \simeq G_1,\quad\text{and}\quad%
					\tilde{\mathcal V}_1 \big\vert_{W_2} \simeq \mathcal O_{W_2}^{\oplus 2}.%
				\]
				\item Assume that $(D.C_1)=0$, $(D.C_2) = 1$, and that there exists an exceptional vector bundle $G_2$ of rank $n$ on $W_2$ such that $G_2 \big\vert_{Z_2} \simeq \mathcal O_{Z_2}(1)^{\oplus n}$. Then, there exists a vector bundle $\tilde{\mathcal V}_2$  on $\tilde{\mathcal X}_0$ satisfying
				\[
					\tilde{\mathcal V}_2 \big\vert_{\tilde X_0} \simeq \mathcal O_{\tilde X_0}(\iota_* D)^{\oplus n},\quad%
					\tilde{\mathcal V}_2 \big\vert_{W_1} \simeq \mathcal O_{W_1}^{\oplus n},\quad\text{and}\quad%
					\tilde{\mathcal V}_2 \big\vert_{W_2} \simeq G_2.%
				\]
			\end{enumerate}
			Furthermore, all the bundles introduced above are exceptional.
		\end{proposition}
		\begin{proof}
			For all of those three cases, the ``ingredient bundles'' on irreducible components have isomorphic restrictions on $Z_i$, hence $\tilde{\mathcal E}(= \tilde{\mathcal D},\,\tilde{\mathcal V}_1,\,\tilde{\mathcal V}_2)$ exists as a vector bundle in the exact sequence\,(\textit{cf.}~\cite[Lemma~7.3]{Hacking:ExceptionalVectorBundle})
		\begin{equation}
			0 \to \tilde{\mathcal E} \to \tilde{\mathcal E}\big\vert_{\tilde X_0} \oplus \tilde{\mathcal E}\big\vert_{W_1} \oplus \tilde{\mathcal E}\big\vert_{W_2} \to \tilde{\mathcal E}\big\vert_{Z_1} \oplus \tilde{\mathcal E} \big\vert_{Z_2} \to 0.\label{eq:ExactSeq_onReducibleSurface}
		\end{equation}
		Conversely, given any vector bundle on $\tilde{\mathcal X}_0$, one can consider the exact sequence of the form (\ref{eq:ExactSeq_onReducibleSurface}). We plug the corresponding endomorphism sheaf into the sequence (\ref{eq:ExactSeq_onReducibleSurface}) to verify that $\tilde{\mathcal E}$ is exceptional. 
		
		Replacing $\tilde{\mathcal E}$ by $\varEnd(\tilde{\mathcal D}) \simeq \tilde{\mathcal D}^\vee \otimes \tilde{\mathcal D} \simeq \mathcal O_{\tilde{\mathcal X}_0}$, we rewrite (\ref{eq:ExactSeq_onReducibleSurface}) as
			\[
				0 \to \mathcal O_{\tilde{\mathcal X}_0} \to \mathcal O_{\tilde X_0} \oplus \mathcal O_{W_1} \oplus \mathcal O_{W_2} \to \mathcal O_{Z_1} \oplus \mathcal O_{Z_2} \to 0.
			\]
			Looking at cohomologies, we can easily verify that $h^p(\mathcal O_{\tilde{\mathcal X}_0}) = h^p( \mathcal O_{\tilde X_0})$. Using the same argument as in Proposition~\ref{prop:SingularSurfaceX}(a), we find
			\[
				H^p(\mathcal O_{\tilde X_0}) = \left\{
				\begin{array}{ll}
					\C & p=0 \\
					0 & p\neq 0.
				\end{array}
				\right.
			\]
			Since $\tilde{\mathcal D}$ is locally free, $\varExt^q(\tilde{\mathcal D},\tilde{\mathcal D})=0$ for $q \neq 0$. By the local-to-global spectral sequence
			\[
				E_2^{p,q} = H^p( \varExt^q(\tilde{\mathcal D},\tilde{\mathcal D})) \Rightarrow H^{p+q}( \End(\tilde{\mathcal D})),
			\]
			$h^p(\End(\tilde{\mathcal D})) \simeq \dim_\C E_2^{p,0} = h^p(\mathcal O_{\tilde{\mathcal X}_0})$, showing that $\tilde{\mathcal D}$ is an exceptional line bundle. Now, we consider (\ref{eq:ExactSeq_onReducibleSurface}) for $\tilde{\mathcal E} = \varEnd(\tilde{\mathcal V}_1)$ which reads
			\[
				0 \to \varEnd(\tilde{\mathcal V}_1) \to \mathcal O_{\tilde X_0}^{\oplus 4} \oplus \varEnd(G_1) \oplus \mathcal O_{W_2}^{\oplus 4} \to \mathcal O_{Z_1}^{\oplus 4} \oplus \mathcal O_{Z_2}^{\oplus 4} \to 0.
			\]
			Since the restrictions $H^0(\mathcal O_{\tilde X_0}) \to H^0(\mathcal O_{Z_1})$, $H^0(\mathcal O_{W_2}) \to H^0(\mathcal O_{Z_2})$ are surjective, we have $h^p(\varEnd(\tilde{\mathcal V}_1)) = h^p(\varEnd(G_1))$. Using the local-to-global spectral sequences for the sheaves $\varEnd(\tilde{\mathcal V}_1)$, $\varEnd(G_1)$ and proceed as done in (a), we can conclude that $h^p(\End(\tilde{\mathcal V}_1)) = h^p(\varEnd(\tilde{\mathcal V}_1)) = h^p(\varEnd(G_1)) = h^p(\End(G_1))$, thus $\tilde{\mathcal V}_1$ is an exceptional vector bundle. Similarly, one can prove that $\tilde{\mathcal V}_2$ is an exceptional vector bundle. \qedhere
	\end{proof}
		
		We use Proposition~\ref{prop:DesiredDivisorOnY} to find the divisors satisfying the conditions described in Proposition~\ref{prop:VectorBundleOnReducibleSurface}(b) and (c).
		\begin{lemma}\label{lem: Divisor for Higher ranks}
			Let $N_1,N_2$ be solutions of the systems of congruence equations
			\[
				N_1 \equiv \left\{
					\begin{array}{ll}
						1 \mod 4 \\
						0 \mod n^2
					\end{array}
				\right. \qquad
				N_2 \equiv \left\{
					\begin{array}{ll}
						0 \mod 4 \\
						1 \mod n^2\,.
					\end{array}
				\right.
			\]
			Then, 
			\begin{enumerate}
				\item there are integers $e,e_1,\ldots,e_r \in \Z$ such that $V_1 := N_1 F_1 + e C_1 + e_1 C_2 + e_2 E_2 + \ldots + e_r E_r$ satisfies $(V_1.C_1)=1$ and $(V_1.C_2)=(V_1.E_2)=\ldots=(V_1.E_r)=0$;
				\item there are integers $f,f_1,\ldots,f_r \in \Z$ such that $V_2 := N_2 F_1 + f C_1 + f_1 C_2 + f_2 E_2 + \ldots + f_r E_r$ satisfies $(V_2.C_2)=1$ and $(V_2.C_1)=(V_2.E_2)=\ldots=(V_2.E_r)=0$.
			\end{enumerate}
		\end{lemma}
		\begin{proof}
			By the choices of $N_1,N_2$, we have
			\[
				\bigl( [\pi_* (N_i F_1)] \cap L_1 ,\, [\pi_*(N_i F_1)] \cap L_2 \bigr) = \left\{
					\begin{array}{ll}
						( \alpha_{C_1},\, 0 ) & i=1 \\
						( 0,\, \alpha_{C_2}) & i=2
					\end{array}
				\right.
			\]
			in $H_1(L_1) \oplus H_1(L_2)$. Applying Proposition~\ref{prop:DesiredDivisorOnY}, we get the desired result.
		\end{proof}
		
		Referring to Proposition~\ref{prop:VectorBundleOnReducibleSurface} and Lemma~\ref{lem: Divisor for Higher ranks}, we can assemble several exceptional vector bundles on the reducible surface $\tilde{\mathcal X_0} = \tilde X_0 \cup W_1 \cup W_2$\,(Table~\ref{table:ExcBundles_OnSingular}).
		
			\begin{center}
				\begin{tabular}{c|c|c|c}
					\raisebox{-5pt}[11pt][6pt]{}$\tilde{\mathcal X}_0$ & $\tilde X_0$ & $W_1$ & $W_2$ \\
					\hline \raisebox{-5pt}[11pt][7pt]{}$\mathcal O_{\tilde{\mathcal X}_0}$ & $\mathcal O_{\tilde X_0}$ & $\mathcal O_{W_1}$ & $\mathcal O_{W_2}$ \\
					\raisebox{-5pt}[11pt][7pt]{}$\tilde{\mathcal F}_{ij}\,{\scriptstyle (1 \leq i\neq j \leq 9)}$ & $\mathcal O_{\tilde X_0}(\iota_*(F_i - F_j))$ & $\mathcal O_{W_1}$ & $\mathcal O_{W_2}$ \\
					\raisebox{-5pt}[11pt][7pt]{}$\tilde{\mathcal C}_0$ & $\mathcal O_{\tilde X_0}(\iota_*C_0)$ & $\mathcal O_{W_1}$ & $\mathcal O_{W_2}$ \\
					\raisebox{-1pt}[11pt][7pt]{$\tilde{\mathcal K}$} & $\mathcal O_{\tilde X_0}(\iota_* K_Y)$ & $\mathcal O_{W_1}(1)$ & $\mathcal O_{W_2}(n-1)$ \\
					\raisebox{-1pt}[11pt][7pt]{$\tilde{\mathcal V}_1$} & $\mathcal O_{\tilde X_0}(\iota_*V_1)^{\oplus 2}$ & $\mathcal T_{W_1}(-1)$ & $\mathcal O_{W_2}^{\oplus 2}$ \\
					\raisebox{-1pt}[11pt][7pt]{$\tilde{\mathcal V}_2$} & $\mathcal O_{\tilde X_0}(\iota_*V_2)^{\oplus n}$ & $\mathcal O_{W_1}^{\oplus n}$ & $\mathcal \mathcal G_2$					
				\end{tabular}\vskip-0.33\baselineskip
				\nopagebreak\captionof{table}{Examples of exceptional vector bundles constructed using Proposition~\ref{prop:VectorBundleOnReducibleSurface}}\label{table:ExcBundles_OnSingular}
			\end{center}
		Standard arguments, such as \cite[p.~1181]{Hacking:ExceptionalVectorBundle}, in the deformation theory say that if an exceptional vector bundle $\tilde{\mathcal D}$ is given in the central fiber of the family $\tilde{\mathcal X}/(0 \in T)$, then it deforms uniquely in a small neighborhood of the family, \textit{i.e.} there exists a vector bundle $\mathscr D$ on $\tilde{\mathcal X}$\,(shrinking $T$ if necessary) such that $\mathscr D \big\vert_{\tilde{ \mathcal X}_0} = \tilde{\mathcal D}$.
		\begin{proposition}\label{prop:Hacking vs Topological}
			Let $\tilde {\mathcal D}$ be the exceptional line bundle on the reducible surface $\tilde{\mathcal X}_0$ obtained in Proposition~\ref{prop:VectorBundleOnReducibleSurface}. Let $\mathscr D$ be a line bundle on $\tilde{\mathcal X}$ such that $\mathscr D \big\vert_{\tilde{\mathcal X}_0} = \tilde{\mathcal D}$. Then, $\mathscr D\big\vert_{X^\gen} = \mathcal O_{X^\gen}(D^\gen)$ where $D^\gen$ is the divisor introduced in \textsection\ref{subsec:TopologyofX}.
		\end{proposition}
		\begin{proof}
			Let $\mathcal B \subset \mathcal X$ be the disjoint union of two small balls around $P_i \in \mathcal X$, and let $\tilde{\mathcal B} = \Phi^{-1}\mathcal B$.
			Using the argument in \cite[p.~1192]{Hacking:ExceptionalVectorBundle}, we observe that the class $c_1\bigl(\mathscr D\big\vert_{\tilde{\mathcal X}_t \setminus \tilde{\mathcal B}_t}\bigr) \in H^2(\tilde{\mathcal X}_t \setminus \tilde{\mathcal B}_t)$ is independent of $t$ when we identify groups $\{H^2(\tilde{\mathcal X}_t \setminus \tilde{\mathcal B}_t)\}_t$ in the natural way. For $t=0$, Poincar\'e duality on manifolds with boundaries gives a sequence of isomorphisms
			\begin{equation}
				H^2(\tilde{\mathcal X}_0 \setminus \tilde{\mathcal B}_0) \simeq H_2(X \setminus B, \partial B) \simeq H_2(X,B) \simeq H_2(X),	\label{eq: Specialization as Poincare dual}
			\end{equation}
			which convey $c_1(\mathscr D\big\vert_{\tilde{\mathcal X}_0 \setminus \tilde{\mathcal B}_0})$ to $[\pi_*D] \in H_2(X)$. As topological cycles, both $c_1(D^\gen\big\vert_{\tilde{\mathcal X}_t \setminus \tilde{\mathcal B}_t})$ and $c_1\bigl(\mathscr D\big\vert_{\tilde{\mathcal X}_t \setminus \tilde{\mathcal B}_t}\bigr)$ are obtained from $[\pi_*D\big\vert_{X \setminus B}]$ by the trivial extension, hence they coincide. The injective map $H_2(X^\gen) \to H_2(X)$ defined in Proposition~\ref{prop:Hacking_Specialization} is nothing but the natural restriction $H^2(X^\gen) \to H^2(X^\gen \setminus M)$, where the source and the target are changed by Poincar\'e duality on manifolds with boundaries. Thus, $H^2(X^\gen) \to H^2(X^\gen \setminus M)$ is injective, so $c_1(D^\gen) = c_1(\mathscr D\big\vert_{X^\gen})$. The first Chern class map $c_1 \colon \Pic X^\gen \to H^2(X^\gen,\Z)$ is an isomorphism, hence $\mathcal O_{X^\gen}(D^\gen) = \mathscr D\big\vert_{X^\gen}$.
		\end{proof}
		We finish this section by presenting an exceptional collection of length $9$ on the Dolgachev surface $X^\gen$. Note that this collection cannot generate the whole category $\D^{\rm b}(X^\gen)$.
		\begin{proposition}\label{prop:ExceptCollection_ofLengthNine}
			Let $F_{1j}^\gen\, (j>1)$ be the divisor on $X^\gen$, which arises from the deformation of $\tilde {\mathcal F}_{1j}$ along $\tilde{\mathcal X} / (0 \in T')$. Then the ordered tuple
			\[
				\bigl\langle \mathcal O_{X^\gen},\, \mathcal O_{X^\gen}(F_{12}^\gen),\,\ldots,\, \mathcal O_{X^\gen}(F_{19}^\gen) \bigr\rangle
			\]
			forms an exceptional collection in the derived category $\D^{\rm b}(X^\gen)$.
		\end{proposition}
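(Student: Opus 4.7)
The strategy is to reduce the exceptionality of the collection to cohomology vanishings on $X^\gen$, propagate them from the central fiber $\tilde{\mathcal X}_0$ via upper-semicontinuity, and then evaluate them with the aid of a Mayer-Vietoris sequence.

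Since $p_g(X^\gen) = q(X^\gen) = 0$ by Theorem~\ref{thm:SmoothingX}, every line bundle on $X^\gen$ is exceptional, so only orthogonality is at stake. Using $F_{1a} - F_{1b} = F_b - F_a$ in $\Pic Y$, the exceptional collection condition reduces to the single family of vanishings
\[
H^k\bigl(X^\gen,\, \mathcal O_{X^\gen}((F_a - F_b)^\gen)\bigr) = 0 \quad \text{for all } k \ge 0 \text{ and } 1 \le b < a \le 9,
\]
where the case $b = 1$ handles the $\Ext$-vanishing against $\mathcal O_{X^\gen}$. For $D = F_a - F_b$ one has $(D.C_1) = (D.C_2) = 0$ and $(D.E_k) = 0$ for $2 \le k \le r$, so Proposition~\ref{prop:LineBundleOnReducibleSurface} produces a line bundle $\tilde{\mathcal F}_{ab}$ on $\tilde{\mathcal X}_0$ with $\tilde{\mathcal F}_{ab}|_{\tilde X_0} = \mathcal O_{\tilde X_0}(\iota_*(F_a - F_b))$ and trivial restrictions to $W_1$ and $W_2$. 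The bundle deforms along $\tilde{\mathcal X}/(0 \in T')$ to $\mathcal O_{X^\gen}((F_a - F_b)^\gen)$, so upper-semicontinuity gives
\[
h^k(X^\gen, (F_a - F_b)^\gen) \le h^k(\tilde{\mathcal X}_0, \tilde{\mathcal F}_{ab}).
\]

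Next I would compute the central-fiber cohomology using the Mayer-Vietoris short exact sequence
\[
0 \to \tilde{\mathcal F}_{ab} \to \mathcal O_{\tilde X_0}(\iota_*(F_a - F_b)) \oplus \mathcal O_{W_1} \oplus \mathcal O_{W_2} \to \mathcal O_{Z_1} \oplus \mathcal O_{Z_2} \to 0
\]
associated with the decomposition $\tilde{\mathcal X}_0 = \tilde X_0 \cup W_1 \cup W_2$. By Proposition~\ref{prop:CohomologyComparison_YtoX} the $\tilde X_0$-cohomology equals $H^p(Y, F_a - F_b)$; the surfaces $W_1, W_2$ and the rational curves $Z_1, Z_2$ contribute $\C$ in degree $0$ and vanish in higher degrees; and restriction of constants makes the degree-zero part of $\mathcal O_{W_1} \oplus \mathcal O_{W_2} \to \mathcal O_{Z_1} \oplus \mathcal O_{Z_2}$ an isomorphism $\C^2 \xrightarrow{\sim} \C^2$. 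The long exact sequence then collapses to $H^k(\tilde{\mathcal X}_0, \tilde{\mathcal F}_{ab}) \simeq H^k(Y, F_a - F_b)$ for every $k$.

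Finally I would verify $H^k(Y, F_a - F_b) = 0$ for all $k$. Adjunction yields $(F_a - F_b)^2 = -2$ and $(F_a - F_b).K_Y = 0$, so Riemann-Roch gives $\chi = 0$. Any effective representative of $F_a - F_b$ must contain $F_a$ as a fixed component since $(F_a - F_b).F_a = -1$; subtracting $F_a$ would require $-F_b$ to be effective, which is impossible because $(-F_b).C_0 = -1$ and no effective divisor can have negative intersection with the movable fiber $C_0$. Dually, Serre duality and Lemma~\ref{lem:CanonicalofY} identify $h^2$ with $h^0(K_Y + F_b - F_a)$; the parallel argument, now extracting $F_b$ with multiplicity $2$ from $(K_Y + F_b - F_a).F_b = -2$ and then observing that the remainder $K_Y - F_a - F_b$ satisfies $(K_Y - F_a - F_b).C_0 = -2$, forces this to vanish too. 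Combined with $\chi = 0$, one obtains $h^1 = 0$ as well. The only real obstacle lies in this effectivity analysis, but it is handled uniformly by the movability of $C_0$.
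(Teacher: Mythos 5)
Your proof is correct and follows essentially the same route as the paper: reduce to the central fiber by upper-semicontinuity, use the restriction sequence for $\tilde{\mathcal X}_0=\tilde X_0\cup W_1\cup W_2$ together with Proposition~\ref{prop:CohomologyComparison_YtoX} to identify the cohomology with $H^k(Y,F_a-F_b)$, and then kill $h^0$, $h^2$ by peeling off negative curves and invoke $\chi=0$ for $h^1$. The only (harmless) deviation is that your final effectivity check uses the nef fiber class $C_0$ where the paper restricts to further rational curves in the chain $C_2,E_2,\ldots,E_{r+1}$; both are instances of Lemma~\ref{lem:RuleOut}.
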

		\begin{proof}
			By virtue of upper-semicontinuity, it suffices to prove that $H^p(\tilde{\mathcal X}_0, \tilde{\mathcal F}_{1i} \otimes \tilde{\mathcal F}_{1j}^\vee) = 0$ for $1 \leq i < j \leq 9$ and $p \geq 0$. The sequence (\ref{eq:ExactSeq_onReducibleSurface}) for $\tilde{\mathcal E} = \tilde{\mathcal F}_{1i} \otimes \tilde{\mathcal F}_{1j}^\vee$ reads
			\[
				0 \to \tilde{\mathcal F}_{1i} \otimes \tilde{\mathcal F}_{1j}^\vee \to \mathcal O_{\tilde X_0}(\iota_* ( F_j - F_i)) \oplus \mathcal O_{W_1} \oplus \mathcal O_{W_2} \to \mathcal O_{Z_1} \oplus \mathcal O_{Z_2} \to 0.
			\]
			Since $H^0(\mathcal O_{W_k}) \simeq H^0(\mathcal O_{Z_k})$ and $H^p(\mathcal O_{W_k}) = H^p (\mathcal O_{Z_k}) = 0$ for $k=1,2$ and $p > 0$, it suffices to prove that $H^p(\mathcal O_{\tilde X_0} ( \iota_*(F_j - F_i) ) ) = 0$ for all $p\geq 0$ and $i< j$. The surface $\tilde X_0$ is normal({\it cf.} \cite[p.~1178]{Hacking:ExceptionalVectorBundle}) and the divisor $F_j - F_i$ does not intersect with the exceptional locus of $\iota \colon Y \to \tilde X_0$. By Proposition~\ref{prop:CohomologyComparison_YtoX}, $H^p( \tilde X_0, \iota_*(F_j-F_i)) \simeq H^p(Y, F_j-F_i)$ for all $p \geq 0$. It remains to prove that $H^p(Y, F_j-F_i) = 0$ for $p \geq 0$. By Riemann-Roch,
			\[
				\chi(F_j-F_i) = \frac{1}{2} ( F_j-F_i \mathbin. F_j - F_i - K_Y) + 1,
			\]
			and this is zero by Lemma~\ref{lem:CanonicalofY}.  Since $(F_j\mathbin.F_j - F_i) = -1$ and $F_i \simeq \P^1$, in the short exact sequence
			\[
				0 \to \mathcal O_Y(-F_i) \to \mathcal O_Y(F_j-F_i) \to \mathcal O_{F_i}(F_j) \to 0,
			\]
			we obtain $H^0(-F_i) \simeq H^0(F_j-F_i)$. In particular, $H^0(F_j-F_i) = 0$. By Serre duality and Lemma~\ref{lem:CanonicalofY}, $H^2(F_j-F_i) = H^0(E_1 + F_i - F_j - C_2 - \ldots - E_{r+1})^*$. Similarly, since $(E_1\mathbin.E_1 + F_i - F_j - C_2 - \ldots - E_{r+1}) < 0$, $(F_i\mathbin.F_i - F_i - C_2 - \ldots - E_{r+1}) < 0$, and $E_1$, $F_j$ are rational curves, $H^0( E_1 + F_i - F_j - C_2 - \ldots - E_{r+1}) \simeq H^0(-F_j - C_2 - \ldots - E_{r+1}) = 0$. This proves that $H^2(F_j-F_i) = 0$. Finally, $\chi (F_j - F_i) = 0$ implies $H^1(F_j - F_i) =0$.
		\end{proof}
		\begin{remark}\label{rmk:ExceptCollection_SerreDuality}
			In Proposition~\ref{prop:ExceptCollection_ofLengthNine}, the trivial bundle $\mathcal O_{X^\gen}$ can be replaced by the deformation of the line bundle $\tilde{\mathcal K}^\vee$\,(Table~\ref{table:ExcBundles_OnSingular}). The strategy of the proof differs nothing. Since $\tilde{\mathcal K}^\vee$ deforms to $\mathcal O_{X^\gen}(-K_{X^\gen})$, taking dual shows that
			\[
				\bigl\langle \mathcal O_{X^\gen}(F_{21}^\gen),\,\ldots,\, \mathcal O_{X^\gen}(F_{91}^\gen) ,\, \mathcal O_{X^\gen}(K_{X^\gen}) \bigr\rangle
			\]
			is also an exceptional collection in $\D^{\rm b}(X^\gen)$. This will be used later\,(see Step~\ref{item:ProofFreePart_thm:ExceptCollection_MaxLength} in the proof of Theorem~\ref{thm:ExceptCollection_MaxLength}).
		\end{remark}
		\section{The N\'eron-Severi lattices of Dolgachev surfaces of type $(2,3)$}\label{sec:NeronSeveri}
			This section is devoted to study the simplest case, namely, the case $n=3$ and $a=1$. The surface $Y$ has the configuration as in Figure~\ref{fig:Configuration_Basic}. We cook up several divisors on $X^\gen$ according to the recipe designed below.
		\begin{recipe}\label{recipe:PicardLatticeOfDolgachev}
		Recall that $\pi \colon Y \to X$ is the contraction of $C_1, C_2, E_2$ and $\iota \colon Y \to \tilde X_0$ is the contraction of $E_2$.		
		\begin{enumerate}[label=\normalfont(\arabic{enumi})]
			\item Pick a divisor $D \in \Pic Y$ satisfying $(D.C_1) \in 2 \Z$, $(D. C_2) \in 3\Z$, and $(D. E_2) = 0$.
			\item As in Proposition~\ref{prop:VectorBundleOnReducibleSurface}, attach suitable line bundles on $W_i$\,($i=1,2$) to $\mathcal O_{\tilde X_0}(\iota_* D)$ to produce a line bundle, say $\tilde{\mathcal D}$, on $\tilde{\mathcal X}_0 = \tilde X_0 \cup W_1 \cup W_2$. It deforms to a line bundle $\mathcal O_{X^\gen}(D^\gen)$ on the Dolgachev surface $X^\gen$.
			\item Use the short exact sequence (\ref{eq:ExactSeq_onReducibleSurface}) to compute $\chi(\tilde {\mathcal D})$. By the deformation invariance of Euler characteristics, $\chi(D^\gen) = \chi(\tilde {\mathcal D})$.
			\item\label{item:recipe:CanonicalIntersection} Since the divisor $\pi_* C_0$ is away from the singularities of $X$, it is Cartier. By Lemma~\ref{lem:Intersection_withFibers}, $(C_0^\gen.D^\gen) = (C_0.D)$. Furthermore, $C_0^\gen = 6K_{X^\gen}$, thus the Riemann-Roch formula on the surface $X^\gen$ reads
			\[
				(D^\gen)^2 = \frac{1}{6}( D . C_0) + 2 \chi(\tilde {\mathcal D}) - 2.
			\]
			This computes the intersections of divisors in $X^\gen$.
		\end{enumerate}
		\end{recipe}
		By Proposition~\ref{prop:Hacking vs Topological}, $D^\gen$ is essentially determined by looking at the preimage of the cycle class $[\pi_*D] \in H_2(X)$ along the map in the sequence (\ref{eq:SpecializationSequence}). This suggests the following use of the terminology.
		\begin{definition}
			Let $D \in \Pic Y$ and $D^\gen \in \Pic X^\gen$ be as in Recipe~\ref{recipe:PicardLatticeOfDolgachev}. We call $D^\gen$ the \emph{lifting} of $D$.
		\end{definition}
		We note that this is a slight abuse of terminologies. What lifts to $D^\gen \in \Pic X^\gen$ is $\pi_* D \in \Cl X$, not $D \in \Pic Y$.

		\begin{lemma}\label{lem:EulerChar_WtdProj}
			Let $h \in H_2(W_2,\Z)$ be the hyperplane class of $W_2 = \P(1,2,1)$. For any even integer $n \in \Z$,
			\[
				\chi( \mathcal O_{W_2}(n) ) = \frac{1}{4}n(n+4) + 1.
			\]
		\end{lemma}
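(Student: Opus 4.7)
The plan is to compute $\chi(\mathcal{O}_{W_2}(n))$ directly by evaluating $h^0 - h^1 + h^2$ for $n \geq 0$ and invoking Serre duality to reduce the case $n < 0$ to this. Write $W_2 = \operatorname{Proj} \C[x,y,z]$ with $\deg x = \deg z = 1$ and $\deg y = 2$; the only singular point is $[0{:}1{:}0]$, a rational double point of type $A_1$, and the dualizing sheaf is $\omega_{W_2} \cong \mathcal{O}_{W_2}(-4)$ since the canonical class of $\P(a_0,a_1,a_2)$ is $\mathcal{O}(-\sum a_i)$. For even $n$, the Weil divisor $nh$ is Cartier, so $\mathcal{O}_{W_2}(n)$ is an honest line bundle.

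For even $n \geq 0$, first count the monomials $x^a y^b z^c$ of weighted degree $n$:
\[
	h^0(\mathcal{O}_{W_2}(n)) \;=\; \sum_{b=0}^{n/2} (n-2b+1) \;=\; \left(\tfrac{n}{2}+1\right)^{\!2} \;=\; \tfrac{(n+2)^2}{4}.
\]
Next, $H^1(\mathcal{O}_{W_2}(n)) = 0$ by the standard vanishing for line bundle cohomology on a weighted projective surface; alternatively, pull back through the minimal resolution $\mathbb{F}_2 \to W_2$, where $\mathcal{O}_{W_2}(n)$ becomes nef, and apply Kawamata--Viehweg. The vanishing $H^2(\mathcal{O}_{W_2}(n)) = 0$ follows from Serre duality together with $h^0(\mathcal{O}_{W_2}(-4-n)) = 0$ for $n \geq -3$. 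Hence
\[
	\chi(\mathcal{O}_{W_2}(n)) \;=\; \tfrac{(n+2)^2}{4} \;=\; \tfrac{1}{4}n(n+4) + 1.
\]

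For even $n < 0$, Serre duality gives $\chi(\mathcal{O}_{W_2}(n)) = \chi(\mathcal{O}_{W_2}(-4-n))$. If $-4-n \geq 0$, the previous case applies; the boundary case $n = -2$ is handled directly, since all three cohomology groups of $\mathcal{O}_{W_2}(-2)$ vanish and the formula evaluates to $0$. Since the right-hand side of the claimed identity is invariant under the substitution $n \mapsto -4-n$, the full statement follows. The computation is entirely routine; the only slightly delicate step is the vanishing $H^1 = 0$, but this is standard and can be justified either from the general cohomology theory of weighted projective spaces or by computation on the minimal resolution $\mathbb{F}_2 \to W_2$.
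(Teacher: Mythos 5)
Your proof is correct, but it takes a genuinely different route from the paper. The paper's argument is a two-line application of the Riemann--Roch formula for invertible sheaves on the normal surface $W_2$ (citing Hacking's Lemma~7.1): from $(1\cdot 2\cdot 1)h^2=1$ and $c_1(K_{W_2})=-4h$ one gets $\chi(\mathcal O_{W_2}(n))=\tfrac12\bigl(nh\mathbin.(n+4)h\bigr)+1=\tfrac14 n(n+4)+1$ directly, with no cohomology computed. You instead evaluate $h^0-h^1+h^2$ term by term: the monomial count $\bigl(\tfrac n2+1\bigr)^2$ for $h^0$, the standard vanishing $H^1(\P(a_0,a_1,a_2),\mathcal O(d))=0$ for all $d$, Serre duality for $h^2$ (legitimate here since $\P(1,2,1)$ is Gorenstein with $\omega\simeq\mathcal O(-4)$), and the symmetry $n\mapsto -4-n$ to handle negative $n$, with the self-dual case $n=-2$ checked separately. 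Both are sound; the paper's version is shorter and needs only the intersection-theoretic inputs, while yours is more self-contained, avoids invoking singular Riemann--Roch, and yields the individual cohomology groups rather than just their alternating sum --- at the cost of importing the $H^1$-vanishing on weighted projective spaces as an external standard fact. (Your parenthetical Kawamata--Viehweg alternative also works for $n\ge 0$, since $f^*\mathcal O(n)-K_{\mathbb F_2}$ is nef and big on the resolution, though it does not by itself cover $n=-2$; your primary justification does.)
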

		\begin{proof}
			By well-known properties of weighted projective spaces, $(1 \cdot 2 \cdot 1)h^2 = 1$, $c_1(K_{W_2}) = -(1+2+1)h = -4h$, and $\mathcal O_{W_2}(2)$ is invertible. The Riemann-Roch formula for invertible sheaves\,({\it cf.} \cite[Lemma~7.1]{Hacking:ExceptionalVectorBundle}) says that $\chi(\mathcal O_{W_2}(n)) = \frac{1}{2}(  nh  \mathbin. (n+4)h) + 1 = \frac{1}{4}n(n+4) + 1$.
		\end{proof}
		\begin{lemma}\label{lem:EulerCharacteristics}
			Let $S$ be a projective normal surface with $\chi(\mathcal O_S) = 1$. Assume that all the divisors below are supported on the smooth locus of $S$. Then,
			\begin{enumerate}[ref=(\alph{enumi})]
				\item $\chi(D_1 + D_2) = \chi(D_1) + \chi(D_2) + (D_1 . D_2) - 1$;
				\item $\chi(-D) = -\chi(D) + D^2 + 2$;
				\item $\chi(-D) = p_a(D)$ where $p_a(D)$ is the arithmetic genus of $D$;
				\item $\chi(nD) = n\chi(D) + \frac{1}{2} n(n-1)D^2 - n + 1$ for all $n \in \Z$.
				\item[\rm(d$'$)] $\chi(nD) = n^2\chi(D) + \frac{1}{2}n(n-1) (K_S .D) - n^2 + 1$ for all $n \in \Z$.
			\end{enumerate}
			Assume in addition that $D$ is an integral curve with $p_a(D) = 0$. Then
			\begin{enumerate}[resume]
				\item $\chi(D) = D^2 + 2$, $\chi(-D) = 0$;
				\item $\chi(nD) = \frac{1}{2}n(n+1)D^2 + (n+1)$ for all $n \in \Z$.
			\end{enumerate}
		\end{lemma}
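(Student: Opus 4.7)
The plan is to derive all six parts from a single tool: the Riemann--Roch formula on $S$. Because every divisor in the statement is supported on the smooth locus of $S$, it is Cartier and the intersection numbers $D^2$ and $K_S.D$ make sense; combined with $\chi(\mathcal O_S)=1$, the Riemann--Roch formula reads
\[
    \chi(D) = 1 + \frac{1}{2}\bigl(D^2 - K_S.D\bigr),
\]
and everything else is bookkeeping.

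First I would prove (a) by expanding $(D_1+D_2)^2 - K_S.(D_1+D_2)$ via bilinearity of the intersection pairing; grouping the pure $D_i$ terms into $\chi(D_i)-1$ and isolating the cross term $(D_1.D_2)$ gives the claimed identity. Part (b) follows immediately by adding the Riemann--Roch expressions for $\chi(D)$ and $\chi(-D)$, since the $K_S.D$ terms cancel while the $D^2$ terms double. Part (c) instead uses the short exact sequence
\[
    0 \to \mathcal O_S(-D) \to \mathcal O_S \to \mathcal O_D \to 0,
\]
which yields $\chi(-D) = \chi(\mathcal O_S) - \chi(\mathcal O_D) = 1 - \chi(\mathcal O_D) = p_a(D)$ directly from the definition of the arithmetic genus.

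For (d), I would run an induction on $n \geq 0$ using part (a) with $D_1 = (n-1)D$ and $D_2 = D$; the telescoping sum $0 + 1 + \cdots + (n-1) = \frac{n(n-1)}{2}$ produces the stated formula, and negative $n$ follows by applying (b) to $(-n)D$ and simplifying. Part (d$'$) is a direct Riemann--Roch computation: expanding $\chi(nD) = 1 + \frac{1}{2}(n^2 D^2 - n K_S.D)$ and rearranging to isolate a factor of $n^2 \chi(D)$ leaves behind $\frac{1}{2}n(n-1)(K_S.D) - n^2 + 1$.

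For the final two parts, the extra hypothesis $p_a(D) = 0$ combined with (c) yields $\chi(-D) = 0$; feeding this into (b) gives $\chi(D) = D^2 + 2$, which is (e). Substituting $\chi(D) = D^2 + 2$ into (d) and collecting coefficients gives $\bigl(n + \frac{n(n-1)}{2}\bigr)D^2 + (2n - n + 1) = \frac{n(n+1)}{2}D^2 + (n+1)$, which is (f). I do not anticipate a genuine obstacle here; the only conceptual point worth flagging is that Riemann--Roch is being applied on a normal, possibly singular surface, but the standing hypothesis that every divisor lives on the smooth locus reduces each identity to a computation on an open smooth subset where the classical formula applies verbatim.
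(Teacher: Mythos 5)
Your proposal is correct and follows the same route as the paper, whose entire proof is the single remark that all the formulas are simple variants of Riemann--Roch; you have simply written out the bookkeeping explicitly (and all the identities check out). The only point worth noting is that in (c) your structure-sequence argument presupposes $D$ effective, whereas for an arbitrary divisor one should read $p_a(D)=1+\frac{1}{2}(D^2+K_S.D)$ as the definition, in which case (c) is again immediate from Riemann--Roch.
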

		\begin{proof}
			All the formula in the statement are simple variants of the Riemann-Roch formula.
		\end{proof}
		\begin{lemma}\label{lem:Intersection_withFibers}
			Let $D$, $\tilde{\mathcal D}$, $D^\gen$ be as in Recipe~\ref{recipe:PicardLatticeOfDolgachev}. Then, $(C_0.D) = (C_0^\gen . D^\gen)$.
		\end{lemma}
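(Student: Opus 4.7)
The plan is to transport the intersection from the general fiber back to the central fiber of $\tilde{\mathcal X}/(0 \in T')$ by exploiting that $C_0$ is supported away from the singularities of $X$. The key intermediate object is a line bundle $\mathscr C_0$ on $\tilde{\mathcal X}$ that restricts to $\mathcal O_{X^\gen}(C_0^\gen)$ on the general fiber and to especially simple pieces on the components of $\tilde{\mathcal X}_0$.

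First I would construct $\mathscr C_0$: the pushforward $\pi_* C_0$ avoids $P_1$ and $P_2$, so $\mathcal O_X(\pi_* C_0) \in \Pic X$; Proposition~\ref{prop:Manetti_PicLemma} lifts it uniquely to $\Pic \mathcal X$, and pulling back along $\Phi \colon \tilde{\mathcal X} \to \mathcal X$ yields $\mathscr C_0$. Because $\Phi$ contracts $W_i$ onto $P_i$, the central-fiber restrictions are
\[
\mathscr C_0\big\vert_{\tilde X_0} \simeq \mathcal O_{\tilde X_0}(\iota_* C_0),\qquad \mathscr C_0\big\vert_{W_1} \simeq \mathcal O_{W_1},\qquad \mathscr C_0\big\vert_{W_2} \simeq \mathcal O_{W_2},
\]
and hence $\mathscr C_0\vert_{Z_i}$ is also trivial. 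Next I would invoke Snapper's formula $(L_1 . L_2) = \chi(L_1 \otimes L_2) - \chi(L_1) - \chi(L_2) + \chi(\mathcal O)$, which recovers the classical intersection pairing on a smooth projective surface and is deformation invariant in a flat proper family. Applied to $\mathscr C_0$ and $\mathscr D$ fiberwise, it gives
\[
(C_0^\gen . D^\gen)_{X^\gen} = (\mathscr C_0\vert_{\tilde{\mathcal X}_0} . \tilde{\mathcal D})_{\tilde{\mathcal X}_0}.
\]

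The right-hand side is then computed by decomposing each relevant Euler characteristic on $\tilde{\mathcal X}_0$ via (\ref{eq:ExactSeq_onReducibleSurface}). Since $\mathscr C_0$ is trivial on $W_1, W_2, Z_1, Z_2$, the contributions of these components to the Snapper alternating sum cancel in pairs, leaving only $(\iota_* C_0 . \iota_* D)_{\tilde X_0}$. The hypotheses $(D . E_i) = 0 = (C_0 . E_i)$ for $i = 2,\ldots,r$ combined with the negative definiteness of the intersection matrix on $E_2,\ldots,E_r$ yield $\iota^*\iota_* D = D$ and $\iota^*\iota_* C_0 = C_0$, so the projection formula gives
\[
(\iota_* C_0 . \iota_* D)_{\tilde X_0} = (C_0 . \iota^*\iota_* D)_Y = (C_0 . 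D),
\]
as required. The only delicate step is the cancellation: one has to verify that the trivial restrictions of $\mathscr C_0$ on $W_i$ and $Z_i$ actually eliminate those components from the Snapper bilinear form, not merely simplify individual $\chi$-terms. Once this is set up carefully, the remainder of the argument is bookkeeping.
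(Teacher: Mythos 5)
Your proposal is correct and follows essentially the same route as the paper: both arguments reduce $(C_0^\gen.D^\gen)$ to a difference of Euler characteristics via the bilinearity of $\chi$ (your Snapper formula is the paper's Lemma~\ref{lem:EulerCharacteristics}(a) combined with $\chi(C_0)=\chi(C_0^\gen)=1$), transport it to the central fiber by deformation invariance, and then use the decomposition sequence~(\ref{eq:ExactSeq_onReducibleSurface}) together with the triviality of $\tilde{\mathcal C}_0$ on $W_1,W_2,Z_1,Z_2$ to cancel those contributions and land on $(C_0.D)$ via Proposition~\ref{prop:CohomologyComparison_YtoX}. The only cosmetic difference is that you build the line bundle on the total space by pulling back along $\Phi$, whereas the paper glues $\tilde{\mathcal C}_0$ on $\tilde{\mathcal X}_0$ and deforms it.
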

		\begin{proof}
			Since $C_0$ does not intersect with $C_1,C_2,E_2$, the corresponding line bundle $\tilde{\mathcal C}_0$ on $\tilde{\mathcal X}_0$ is the gluing of $\mathcal O_{\tilde X_0}(\iota_*C_0)$, $\mathcal O_{W_1}$, and $\mathcal O_{W_2}$. Thus, $(\tilde{\mathcal D} \otimes \tilde{\mathcal C}_0) \big\vert_{W_i} = \tilde{\mathcal D}\big\vert_{W_i}$ for $i=1,2$. From this and (\ref{eq:ExactSeq_onReducibleSurface}), it can be immediately shown that $\chi(\tilde{\mathcal D} \otimes \tilde{\mathcal C}_0 ) - \chi(\tilde{\mathcal D}) = \chi(D + C_0 ) - \chi (D)$. If $D$ is a principal divisor on $Y$, then the previous equation says $\chi(C_0^\gen) = \chi(\tilde{\mathcal C}_0) = \chi(C_0) = 1$. Now, using Lemma~\ref{lem:EulerCharacteristics}(a), we deduce $(C_0^\gen. D^\gen) = \chi(D^\gen + C_0^\gen) - \chi(D^\gen) = \chi(\tilde{\mathcal D} \otimes \tilde{\mathcal C}_0 ) - \chi(\tilde{\mathcal D}) = \chi(D + C_0 ) - \chi (D) = (C_0.D)$. \qedhere
		\end{proof}
		\begin{definition}
			Let $H \in \Pic \P^2$ be a line, let $p \colon Y \to \P^2$ be the blow down morphism, and let $L = p^*(2H)$ be the proper transform of a general plane conic. Then, $( L . C_1) = 6$, $(L . C_2) = 6$ and $(L. E_2) = 0$. Let $L^\gen$ be the lifting of $L$. This means that there exists a line bundle $\tilde{\mathcal L}$ on the reducible surface $\tilde{\mathcal X}_0 = \tilde X_0 \cup W_1 \cup W_2$ such that
			\[
				\tilde{\mathcal L}\big\vert_{\tilde X_0} = \mathcal O_{\tilde X_0}(\iota_* L),\quad \tilde{\mathcal L}\big\vert_{W_1} = \mathcal O_{W_1}(3),\ \text{and}\quad \tilde{\mathcal L}\big\vert_{W_2} = \mathcal O_{W_2}(4),
			\]
			which deforms to the line bundle $\mathcal O_{X^\gen}(L^\gen)$ on $X^\gen$. Let $F_{ij}^\gen \in \Pic X^\gen$ be the lifting of $F_i - F_j$, or equivalently, the divisor associated with the deformation of $\tilde{\mathcal F}_{ij}$\,(Table~\ref{table:ExcBundles_OnSingular}). We define 
			\begin{align*}
				G_i^\gen &:= - L^\gen + 10K_{X^\gen} + F_{i9}^\gen \quad \text{for }i=1,\ldots,8; \\
				G_9^\gen &:= -L^\gen + 11K_{X^\gen}.
			\end{align*}
		\end{definition}
		\begin{proposition}\label{prop:G_1to9}
			The divisors $G_1^\gen,\ldots,G_9^\gen$ satisfy the following numerical properties:
			\begin{enumerate}
				\item $\chi(G_i^\gen) = 1$ and $(G_i^\gen  . K_{X^\gen}) = -1$;
				\item for $i < j$, $\chi(G_i^\gen - G_j^\gen)=0$.
			\end{enumerate}
			In particular, $(G_i^\gen)^2 = -1$ and $(G_i^\gen. G_j^\gen) = 0$ for $1 \leq i < j \leq 9$.
		\end{proposition}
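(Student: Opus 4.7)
The plan is to use Recipe~4.1 to express each $G_i^\gen$ as the deformation of an explicit line bundle $\tilde{\mathcal G}_i$ on $\tilde{\mathcal X}_0 = \tilde X_0 \cup W_1 \cup W_2$, compute $\chi(\tilde{\mathcal G}_i)$ via the exact sequence~(3.5), and then extract all intersection numbers via Riemann-Roch on $X^\gen$. A crucial preliminary observation is that, in the case $n = 3$, $a = 1$, the surface $\tilde X_0$ has only an $A_1$-singularity, so $K_{\tilde X_0}$ is Cartier and $\iota \colon Y \to \tilde X_0$ is crepant; this allows me to identify $\iota^*\mathcal O_{\tilde X_0}(K_{\tilde X_0}) = \mathcal O_Y(K_Y)$ and apply Proposition~3.4 freely to divisors orthogonal to $E_2$.

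For~(a), I set $\tilde{\mathcal G}_i := \tilde{\mathcal L}_0^\vee \otimes \tilde{\mathcal K}^{\otimes 10} \otimes \tilde{\mathcal F}_{i9}$ for $i=1,\ldots,8$ and $\tilde{\mathcal G}_9 := \tilde{\mathcal L}_0^\vee \otimes \tilde{\mathcal K}^{\otimes 11}$, so that $\tilde{\mathcal G}_i$ deforms to $\mathcal O_{X^\gen}(G_i^\gen)$. Its restrictions to $W_1 \simeq \P^2$, $W_2 \simeq \P(1,2,1)$, $Z_1$, $Z_2$ are explicit line bundles whose Euler characteristics are given by the standard formula for $\P^2$, Lemma~4.2, and the degree identities $\mathcal O_{W_1}(1)\vert_{Z_1} = \mathcal O_{Z_1}(2)$, $\mathcal O_{W_2}(2)\vert_{Z_2} = \mathcal O_{Z_2}(3)$. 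The $\tilde X_0$-piece, via Proposition~3.4, equals $\chi(Y, -L_0 + 10K_Y + F_i - F_9)$ (and similarly for $i=9$), a routine Riemann-Roch on $Y$ using $L_0^2 = 4$, $K_Y^2 = -3$, $(F_i - F_9)^2 = -2$, and $L_0 . K_Y = -6$ (the last from the projection formula, since $p_* K_Y = -3H$). Summing via~(3.5) gives $\chi(\tilde{\mathcal G}_i) = 1$, hence $\chi(G_i^\gen) = 1$ by deformation invariance. The value $(G_i^\gen . K_{X^\gen}) = -1$ follows by expanding the definition of $G_i^\gen$ and using $(K_{X^\gen})^2 = 0$, $C_0^\gen = 6K_{X^\gen}$, and Lemma~4.5: only $-(L_0^\gen . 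K_{X^\gen}) = -\tfrac{1}{6}(L_0 . C_0) = -1$ contributes, since each $F_i$ is a section of $Y \to \P^1$, so $(F_i - F_9) . C_0 = 0$.

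Riemann-Roch on the smooth surface $X^\gen$ then yields $(G_i^\gen)^2 = 2\chi(G_i^\gen) - 2 + (G_i^\gen . K_{X^\gen}) = -1$. For~(b), I note that $G_i^\gen - G_j^\gen = F_{ij}^\gen$ for $1 \leq i < j \leq 8$, so the vanishing $\chi(F_{ij}^\gen) = 0$ is already embedded in the proof of Proposition~3.13; while $G_i^\gen - G_9^\gen = F_{i9}^\gen - K_{X^\gen}$ corresponds to $\tilde{\mathcal F}_{i9} \otimes \tilde{\mathcal K}^\vee$, whose Euler characteristic reduces via~(3.5) and Proposition~3.4 to $\chi(Y, F_i - F_9 - K_Y)$ plus small integer contributions from $\mathcal O_{W_1}(-1)$, $\mathcal O_{W_2}(-2)$, $\mathcal O_{Z_1}(-2)$, $\mathcal O_{Z_2}(-3)$, evaluating to $0$. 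Finally, Riemann-Roch on $X^\gen$ applied to $G_i^\gen - G_j^\gen$, together with $(G_i^\gen - G_j^\gen) . K_{X^\gen} = 0$ and $(G_i^\gen - G_j^\gen)^2 = -2 - 2(G_i^\gen . G_j^\gen)$, forces $(G_i^\gen . G_j^\gen) = 0$.

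The main obstacle is bookkeeping: tracking each divisor class through its three incarnations (on $Y$, on $\tilde X_0 \subset \tilde{\mathcal X}_0$, and on $X^\gen$) and correctly recording the degrees of the restrictions to $W_1$ and $W_2$. The linchpin is that $\tilde{\mathcal K}^{\otimes 10}\vert_{\tilde X_0}$ can be identified with the invertible sheaf $\mathcal O_{\tilde X_0}(10 K_{\tilde X_0})$, which is precisely what the $A_1$ singularity of $\tilde X_0$ (available thanks to $n = 3$) guarantees.
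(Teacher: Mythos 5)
Your proposal is correct and follows essentially the same route as the paper: compute Euler characteristics on the reducible central fiber $\tilde X_0 \cup W_1 \cup W_2$ via the restriction exact sequence, transfer them to $X^\gen$ by deformation invariance, get $(G_i^\gen.K_{X^\gen})$ from $(C_0.D)=(C_0^\gen.D^\gen)$, and extract the intersection numbers by Riemann--Roch. The only (harmless) difference is bookkeeping: you twist by $\tilde{\mathcal K}^{\otimes 10}$ on the central fiber and compute $\chi(\tilde{\mathcal G}_i)$ directly, whereas the paper computes $\chi(\tilde{\mathcal L}_0^\vee\otimes\tilde{\mathcal F}_{i9})=11$ for the untwisted bundle and recovers $(G_i^\gen)^2$ and $\chi(G_i^\gen)$ by applying Riemann--Roch on $X^\gen$ to $G_i^\gen-10K_{X^\gen}$; both computations agree, and your observation that $K_{\tilde X_0}$ is Cartier (an $A_1$ point, crepant contraction) is exactly what makes your variant legitimate.
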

		\begin{proof}
			First, consider the case $i \leq 8$. By Recipe~\ref{recipe:PicardLatticeOfDolgachev}\ref{item:recipe:CanonicalIntersection} and $K_{X^\gen}^2 = 0$, $(K_{X^\gen} . G_i^\gen) = \frac{1}{6}(C_0 \mathbin. -L + F_i - F_9) = -1$.
			Since the alternating sum of Euler characteristics in the sequence (\ref{eq:ExactSeq_onReducibleSurface}) is zero, we get the formula
			\begin{align*}
				\chi( \tilde{\mathcal L}^\vee \otimes \tilde{\mathcal F}_{i9}) ={}& \chi(-L + F_i - F_9) + \chi(\mathcal O_{W_1}(-3)) + \chi(\mathcal O_{W_2}(-4)) \\
				&{}- \chi(\mathcal O_{Z_1}(-6)) - \chi(\mathcal O_{Z_2}(-6)).
			\end{align*}
			From this we compute $\chi( \tilde{\mathcal L}^\vee \otimes \tilde{\mathcal F}_{i9}) = 11$.
			The Riemann-Roch formula for $-L^\gen + F_{i9}^\gen = G_i^\gen - 10K_{X^\gen}$ says $(G_i^\gen - 10K_{X^\gen})^2 - (K_{X^\gen} \mathbin. G_i^\gen - K_{X^\gen} ) = 20$, hence $(G_i^\gen)^2 = -1$ and $\chi(G_i^\gen) = 1$. For $1 \leq i < j \leq 8$, $G_i - G_j = F_i - F_j$. Since $(F_i - F_j \mathbin. C_1 ) = (F_i - F_j \mathbin. C_2 ) = (F_i - F_j \mathbin. E_2 ) = 0$, the divisor $F_i - F_j$ lifts to the Cartier divisor $F_{ij}^\gen$. Hence, we can compute $\chi(G_i^\gen - G_j^\gen) = \chi(F_i - F_j) = 0$. This proves the statement for $i,j \leq 8$. The proof of the statement involving $G_9^\gen$ follows the same lines. Since $\chi(\tilde{\mathcal L}^\vee ) = 12$, $( G_9^\gen - 11K_{X^\gen} ) ^2 - (K_{X^\gen}\mathbin . G_9^\gen - 11K_{X^\gen}) = 22$. This leads to $(G_9^\gen)^2 = -1$. For $i \leq 8$,
			\begin{align*}
				\chi(G_i^\gen - G_9^\gen) ={}& \chi(F_i - F_9 - K_Y) + \chi(\mathcal O_{W_1}(-1)) + \chi(\mathcal O_{W_2}(-2)) \\
				&{} - \chi(\mathcal O_{Z_1}(-2)) - \chi(\mathcal O_{Z_2}(-3)),
			\end{align*}
			and it is immediate to see that the right hand side is zero.
		\end{proof}
		We complete our list of generators of $\Pic X^\gen$ by introducing $G_{10}^\gen$. The choice of $G_{10}^\gen$ is motivated by the proof of the step (iii)${}\Rightarrow{}$(i) in \cite[Theorem~3.1]{Vial:Exceptional_NeronSeveriLattice}.
		\begin{proposition}\label{prop:G_10}
			Let $G_{10}^\gen$ be the $\Q$-divisor $\frac{1}{3}( G_1^\gen + G_2^\gen + \ldots + G_9^\gen - K_{X^\gen})$. Then, $G_{10}^\gen$ is Cartier.
		\end{proposition}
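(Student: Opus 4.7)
The plan is to exhibit $G_{10}^\gen$ explicitly as a $\Z$-linear combination of Cartier divisors already constructed in the previous sections. Because $X^\gen$ is simply connected, $\Pic X^\gen \simeq H^2(X^\gen,\Z)$ is torsion-free, so it suffices to write $\sum_{i=1}^9 G_i^\gen - K_{X^\gen}$ as $3$ times an element of $\Pic X^\gen$. Expanding the definitions of $G_1^\gen,\ldots,G_9^\gen$ gives
\[
\sum_{i=1}^9 G_i^\gen - K_{X^\gen} \;=\; -9 L_0^\gen + 90 K_{X^\gen} + \sum_{i=1}^8 F_{i9}^\gen,
\]
and the first two summands are visibly in $3\,\Pic X^\gen$. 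Hence the proposition reduces to showing that $\sum_{i=1}^8 F_{i9}^\gen$ is divisible by $3$.

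First I would verify that the assignment $D \mapsto D^\gen$ of Recipe~\ref{recipe:PicardLatticeOfDolgachev} is additive on the subgroup of $\Pic Y$ whose members satisfy the intersection hypotheses of Proposition~\ref{prop:LineBundleOnReducibleSurface}: on each of the three components $\tilde X_0$, $W_1$, $W_2$ the formation of $\tilde{\mathcal D}$ commutes with tensor product, and exceptional line bundles on $\tilde{\mathcal X}_0$ deform uniquely along $\tilde{\mathcal X}/(0\in T')$. Consequently $\sum_{i=1}^8 F_{i9}^\gen = \bigl(\sum_{i=1}^9 F_i - 9F_9\bigr)^\gen$.

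The decisive step is the identity $\sum_{i=1}^9 F_i = 3 p^*H - C_0$ in $\Pic Y$ (read off from $C_0 = 3p^*H - \sum F_i$), which rewrites
\[
\sum_{i=1}^9 F_i - 9 F_9 \;=\; 3\,(p^*H - 3 F_9) - C_0.
\]
A short intersection check shows that $p^*H - 3F_9$ satisfies the hypotheses of Recipe~\ref{recipe:PicardLatticeOfDolgachev} with $d_1 = d_2 = 0$, so $(p^*H - 3F_9)^\gen$ is well defined in $\Pic X^\gen$. Combining additivity with the canonical bundle formula $C_0^\gen = 6 K_{X^\gen}$ from Theorem~\ref{thm:SmoothingX}(c)---upgrading $\Q$-linear to linear equivalence by torsion-freeness of $\Pic X^\gen$---then gives
\[
\sum_{i=1}^8 F_{i9}^\gen \;=\; 3\,(p^*H - 3 F_9)^\gen - 6 K_{X^\gen}.
\]
Substituting back and dividing by $3$ yields
\[
G_{10}^\gen \;=\; -3 L_0^\gen + 28 K_{X^\gen} + (p^*H - 3 F_9)^\gen \;\in\; \Pic X^\gen,
\]
which in fact matches the expression for $G_{10}^\gen$ advertised in Theorem~\ref{thm:Picard_ofGeneralFiber}. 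The only genuinely nontrivial ingredient is the additivity of $D \mapsto D^\gen$; once the tensor product description of $\tilde{\mathcal D}$ on the three components of $\tilde{\mathcal X}_0$ is spelled out, additivity follows formally from uniqueness of deformations of exceptional line bundles, and the rest is bookkeeping.
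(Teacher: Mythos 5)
Your proposal is correct and follows essentially the same route as the paper: expand $\sum_{i=1}^9 G_i^\gen - K_{X^\gen}$, reduce to divisibility of $\sum_{i=1}^8 F_{i9}^\gen$ by $3$, and use the identity $\sum_{i=1}^9 F_i - 9F_9 = 3(p^*H - 3F_9) - C_0$ together with $C_0^\gen = 6K_{X^\gen}$. The only difference is that you make the additivity of $D \mapsto D^\gen$ explicit, which the paper uses tacitly.
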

		\begin{proof}
			Since
			\[
				\sum_{i=1}^9 G_i^\gen - K_{X^\gen} = - 9L^\gen + 90 K_{X^\gen} + \sum_{i=1}^8 F_{i9}^\gen,
			\]
			it suffices to prove that $\sum\limits_{i=1}^8 F_{i9}^\gen = 3D^\gen$ for some $D^\gen \in \Pic X^\gen$. Let $p \colon Y \to \P^2$ be the blowing up morphism and let $H$ be a line in $\P^2$. Since $K_Y = p^* (-3H) + F_1 + F_2 + \ldots + F_9 + E_1 + E_2 + 2E_3$, $K_Y - E_1 - E_2 - 2E_3 = p^*(-3H) + F_1 + \ldots + F_9 = -C_0$, so $F_1 + \ldots + F_9 = 3p^*H - C_0$. Consider the divisor $p^*H - 3F_9$ in $Y$. Clearly, the intersections of $(p^*H - 3F_9)$ with $C_1$, $C_2$, $E_2$ are all zero, hence $p^*H - 3F_9$ lifts to a Cartier divisor $(p^*H-3F_9)^\gen$ in $X^\gen$. Since 
			\begin{align*}
				\sum_{i=1}^8 (F_i - F_9) &= \sum_{i=1}^9 F_i - 9F_9 \\
				&= 3(p^*H - 3F_9) - C_0
			\end{align*}
			and $C_0$ lifts to $6K_{X^\gen}$, $D^\gen := (p^*H - 3F_9)^\gen - 2K_{X^\gen}$ satisfies $\sum_{i=1}^8F_{i9}^\gen = 3D^\gen$.
		\end{proof}
		Combining the propositions \ref{prop:G_1to9} and \ref{prop:G_10}, we obtain:\nopagebreak
		\begin{theorem}\label{thm:Picard_ofGeneralFiber}
			The intersection matrix of divisors $\{G_i^\gen\}_{i=1}^{10}$ is
			\begin{equation}
				\Bigl( (G_i^\gen . G_j^\gen ) \Bigr)_{1 \leq i,j \leq 10} =  \left[
				\begin{array}{cccc}
					-1  & \cdots & 0 & 0 \\
					\vdots & \ddots & \vdots & \vdots \\
					0 &  \cdots & -1 & 0 \\
					0 &  \cdots & 0 & 1
				\end{array}
				\right]\raisebox{-2\baselineskip}[0pt][0pt]{.}		\label{eq:IntersectionMatrix}
			\end{equation}
			In particular, the set $G:=\{G_i^\gen\}_{i=1}^{10}$ forms a $\Z$-basis of the N\'eron-Severi lattice $\op{NS}(X^\gen)$. By \cite[p.~137]{Dolgachev:AlgebraicSurfaces}, $\Pic X^\gen$ is torsion-free, thus $G$ forms a $\Z$-basis for $\Pic X^\gen$.
		\end{theorem}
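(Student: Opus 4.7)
The theorem combines two tasks: verifying the shape of the intersection matrix, and upgrading the spanning set $\{G_i^\gen\}$ to a $\Z$-basis of $\op{NS}(X^\gen)$. The plan is to exploit Proposition~\ref{prop:G_1to9} and the identity defining $G_{10}^\gen$ in Proposition~\ref{prop:G_10} in order to reduce the nontrivial entries of the intersection matrix to short numerical manipulations, and then appeal to the unimodularity of the intersection form on a simply connected surface to conclude the basis property.

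First I would handle the intersections involving $G_{10}^\gen$. By Proposition~\ref{prop:G_10},
\[
    3G_{10}^\gen = G_1^\gen + G_2^\gen + \ldots + G_9^\gen - K_{X^\gen}.
\]
Pairing this identity with itself, and using $(G_i^\gen)^2 = -1$, $(G_i^\gen . G_j^\gen) = 0$ for $i < j \leq 9$, $(G_i^\gen . K_{X^\gen}) = -1$ from Proposition~\ref{prop:G_1to9}, together with $K_{X^\gen}^2 = 0$ from Theorem~\ref{thm:SmoothingX}, I get
\[
    9(G_{10}^\gen)^2 = \sum_{i=1}^9 (G_i^\gen)^2 - 2\sum_{i=1}^9 (G_i^\gen . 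K_{X^\gen}) + K_{X^\gen}^2 = -9 + 18 + 0 = 9,
\]
hence $(G_{10}^\gen)^2 = 1$. Pairing instead with $G_i^\gen$ for $i \leq 9$ yields
\[
    3(G_{10}^\gen . G_i^\gen) = (G_i^\gen)^2 + \sum_{j \neq i}(G_i^\gen . G_j^\gen) - (G_i^\gen . K_{X^\gen}) = -1 + 0 - (-1) = 0.
\]
Combined with Proposition~\ref{prop:G_1to9}, this exhibits the claimed diagonal matrix.

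It remains to see that $\{G_i^\gen\}_{i=1}^{10}$ is a $\Z$-basis of $\op{NS}(X^\gen)$. Since $X^\gen$ is simply connected and $p_g(X^\gen) = q(X^\gen) = 0$ by Theorem~\ref{thm:SmoothingX}, the Hodge decomposition forces $H^{2,0}(X^\gen) = 0$, so the first Chern class gives an isomorphism $\op{NS}(X^\gen) \simeq H^2(X^\gen,\Z)$ (which is also torsion-free by simple connectedness). The rank of this group equals $b_2(X^\gen)$, which I would compute from Noether's formula: $\chi(\mathcal O_{X^\gen}) = 1$ and $K_{X^\gen}^2 = 0$ give $\chi_{\rm top}(X^\gen) = 12$, so with $b_1 = 0$ we obtain $b_2 = 10$. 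Poincar\'e duality on the simply connected $4$-manifold $X^\gen$ makes the intersection form on $H^2(X^\gen,\Z)$ unimodular.

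Now the sublattice $L \subset \op{NS}(X^\gen)$ generated by $\{G_i^\gen\}_{i=1}^{10}$ has rank $10$ and, by the intersection matrix just computed, discriminant $(-1)^9 \cdot 1 = -1$. Since $L$ is a full-rank sublattice of the unimodular lattice $\op{NS}(X^\gen)$, the index formula $\lvert \det L \rvert = [\op{NS}(X^\gen):L]^2 \cdot \lvert \det \op{NS}(X^\gen) \rvert$ forces $[\op{NS}(X^\gen):L] = 1$, hence $L = \op{NS}(X^\gen)$. The computations in the first paragraph are entirely routine; the only delicate ingredient is verifying that one may indeed apply unimodularity of the full $H^2$ to $\op{NS}$, which is where the hypotheses $p_g = q = 0$ and simple connectedness are essential, and both were established in Theorem~\ref{thm:SmoothingX}.
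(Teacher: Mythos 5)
Your proof is correct, and the two arguments differ only in how they package the final lattice-theoretic step. The verification of the Gram matrix --- pairing the identity $3G_{10}^\gen = G_1^\gen + \cdots + G_9^\gen - K_{X^\gen}$ with itself and with each $G_i^\gen$ --- is precisely the computation the paper leaves implicit in ``combining Propositions~\ref{prop:G_1to9} and \ref{prop:G_10},'' and your numbers check out. For the basis property both arguments run on the same engine, namely unimodularity of $\op{NS}(X^\gen)$ (which, for a simply connected surface with $p_g=q=0$, is all of $H^2(X^\gen,\Z)$, torsion-free and unimodular by Poincar\'e duality), but they deploy it differently: the paper invokes the Hodge index theorem to produce an abstract $\Z$-basis $\alpha$ whose Gram matrix equals $E=\op{diag}(-1,\ldots,-1,1)$ and then deduces from $E=A^{\rm t}EA$ that the change-of-basis matrix satisfies $(\det A)^2=1$; you instead compute $b_2=10$ from Noether's formula and apply the index formula $\lvert\det L\rvert=[\op{NS}(X^\gen):L]^2\,\lvert\det\op{NS}(X^\gen)\rvert$ to the full-rank sublattice $L$ generated by the $G_i^\gen$. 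Your route is marginally more self-contained: producing a basis with that specific Gram matrix from the Hodge index theorem alone tacitly relies on the classification of odd indefinite unimodular lattices of signature $(1,9)$, whereas your argument needs only unimodularity of the ambient lattice, not its isomorphism type. Both approaches, of course, reach the same conclusion.
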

		\begin{proof}
			We claim that the set of divisors $\{G_i^\gen\}_{i=1}^{10}$ generates the N\'eron-Severi lattice. By Hodge index theorem, there is a $\Z$-basis for $\op{NS}(X^\gen)$, say $\alpha = \{\alpha_i\}_{i=1}^{10}$, such that the intersection matrix with respect to $\{\alpha_i\}_{i=1}^{10}$ is the same as (\ref{eq:IntersectionMatrix}). Let $A = ( a_{ij} )_{1 \leq i,j \leq 10}$ be the integral matrix determined by
			\[
				G_i^\gen = \sum_{j=1}^{10} a_{ij} \alpha_j.
			\]
			Given $v \in \op{NS}(X^\gen)$, let $[v]_\alpha$ be the column matrix of coordinates with respect to the basis $\alpha$. Then, $[G_i^\gen]_\alpha = Ae_i$ where $e_i$ is the $i$th column vector. For $1 \leq i,j \leq 10$,
			\[
				(G_i^\gen . G_j^\gen) = (A e_i)^t E (A e_j)
			\]
			where $E$ is the intersection matrix with respect to the basis $\alpha$. The above equation implies that the intersection matrix with respect to the set $G$ is $A^{\rm t} E A$. Since the intersection matrices with respect to both $G$ and $\alpha$ are the same, $E = A^{\rm t} E A$. This implies that $1 = \det (A^{\rm t}A) = (\det A)^2$, hence $A$ is invertible over $\Z$. This proves that $G$ is a $\Z$-basis of $\op{NS}(X^\gen)$. The last statement on the Picard group follows immediately.
		\end{proof}
		We close the section with the summary of divisors on $X^\gen$.
		\begin{summary}\label{summary:Divisors_onX^g}
			Recall that $Y$ is the rational elliptic surface in Section~\ref{sec:Construction}, $p \colon Y \to \P^2$ is the blow down morphism, $H \in \Pic \P^2$ is a hyperplane divisor, and $\pi \colon Y \to X$ is the contraction of $C_1,\,C_2,\,E_2$. Then,
			\begin{enumerate}[label=\normalfont(\arabic{enumi})]
				\item $F_{ij}^\gen$\,($1\leq i,j\leq 9$) is the lifting of $F_i - F_j$;
				\item $(p^*H-3F_9)^\gen$ is the lifting of $p^*H - 3F_9$;
				\item $L^\gen$ is the lifting of $p^*(2H)$;
				\item $G_i^\gen = - L^\gen + 10 K_{X^\gen} + F_{i9}^\gen$ for $i=1,\ldots,8$;
				\item $G_9^\gen = -L^\gen + 11K_{X^\gen}$;
				\item $G_{10}^\gen = -3L^\gen + (p^*H - 3F_9)^\gen + 28K_{X^\gen}$.
			\end{enumerate}
		\end{summary}
		\section{Exceptional collections of maximal length on Dolgachev surfaces of type $(2,3)$}\label{sec:ExcepCollectMaxLength}
			\subsection{Exceptional collections of maximal length}%
			We continue to study the case $(n,a)=(3,1)$. Throughout this section, we will prove that there exists an exceptional collection of maximal length in $\D^{\rm b}(X^\gen)$. Proving exceptionality of a given collection usually consists of numerous cohomology computations, so we begin by introducing some computational machineries.
		\begin{lemma}\label{lem:DummyBundle}
			The liftings $C_1^\gen$, $(2C_2+E_2)^\gen$ exist and they are the zero divisors in $X^\gen$.
		\end{lemma}
		\begin{proof}
			Let $\tilde{\mathcal C}_1$ be the gluing of line bundles $\mathcal O_{\tilde X_0}(\iota_* C_1)$, $\mathcal O_{W_1}(-2)$, and $\mathcal O_{W_2}$, and let $\mathcal O_{X^\gen}(C_1^\gen)$ be its deformation. It is immediate to see that $\chi(C_1^\gen) = 1$ and $\chi(-C_1^\gen)=1$. By Riemann-Roch formula, $(C_1^\gen)^2 = (C_1^\gen . K_{X^\gen}) = 0$. For $i \leq 8$,
			\begin{align*}
				\chi(G_i^\gen - 10K_{X^\gen} - C_1^\gen ) ={}& \chi( \tilde{\mathcal L}^\vee \otimes \tilde{\mathcal F}_{i9} \otimes \tilde{\mathcal C}_1^\vee ) \\
				={}& \chi(-L + F_i - F_9 - C_1) + \chi(\mathcal O_{W_1}(-1)) + \chi(\mathcal O_{W_2}(-4)) \\
				&{} - \chi(\mathcal O_{Z_1}(-2)) - \chi(\mathcal O_{Z_2}(-6)),
			\end{align*}
			which yields $\chi(G_i^\gen - 10K_{X^\gen} - C_1^\gen )=11$. By the Riemann-Roch, $(G_i^\gen - 10K_{X^\gen}- C_1^\gen)^2 - (K_{X^\gen} \mathbin . G_i^\gen - 10K_{X^\gen} - C_1^\gen) = 2\chi(G_i^\gen -10K_{X^\gen} - C_1^\gen)-2 = 20$. The left hand side is $-2(G_i^\gen . C_1^\gen) + 20$, thus $(G_i^\gen . C_1^\gen)  =0$. Since $(C_1^\gen . K_{X^\gen}) = 0$ and $3G_{10}^\gen = G_1^\gen + \ldots + G_9^\gen - K_{X^\gen}$, $(G_{10}^\gen . C_1^\gen) = 0$. Hence, $C_1^\gen$ is numerically trivial by Theorem~\ref{thm:Picard_ofGeneralFiber}. This shows that $C_1^\gen$ is trivial since there is no torsion in $\Pic X^\gen$. Exactly the same argument holds for the lifting of $2C_2 + E_2$.
		\end{proof}
		\begin{example}\label{eg:DivisorVaries_onSingular}
			Since $C_0$ lifts to $6K_{X^\gen}$, $2E_1 = C_0 - C_1$ lifts to $6K_{X^\gen}$. Thus $E_1$ lifts to $3K_{X^\gen}$. Similarly, $C_2 + E_2 + E_3$ lifts to $2K_{X^\gen}$. Hence, $K_Y = E_1 - C_2 - E_2 - E_3$ lifts to $3K_{X^\gen} - 2K_{X^\gen} = K_{X^\gen}$. Also, $(E_2 + 2E_3) - E_1$ lifts to $K_{X^\gen}$, whereas $K_Y$ and $(E_2+2E_3)-E_1$ are different in $\Pic Y$. These are essentially due to Lemma~\ref{lem:DummyBundle}. For instance, we have
			\begin{align*}
				(E_2 + 2E_3) - E_1 - K_Y &= - 2 E_1 + C_2 + 2E_2 + 3E_3 \\
				&= -C_1,
			\end{align*}
			thus $(E_2 + 2E_3)^\gen - E_1^\gen - K_{X^\gen} = -C_1^\gen = 0$.
		\end{example}
		As Example~\ref{eg:DivisorVaries_onSingular} presents, there are free spaces to choose $D \in \Pic Y$ given a fixed divisor $D^\gen \in \Pic X^\gen$. The following lemma gives a direction to choose $D$. Note that the lemma requires assumptions on $(D.C_1)$ and $(D.C_2)$, but Lemma~\ref{lem:DummyBundle} provides the way to adjust those numbers.
		\begin{lemma}\label{lem:H0Computation}
			Let $D$ be a divisor in $Y$ such that $(D.C_1) = 2d_1 \in 2\Z$, $(D.C_2) = 3d_2 \in 3\Z$, and $(D . E_2) = 0$. %
			Let $D^\gen$ be the lifting of $D$. Then,
			\[
				h^0(X^\gen, D^\gen) \leq h^0(Y,D) + h^0(\mathcal O_{W_1}(d_1)) + h^0(\mathcal O_{W_2}(2d_2)) - h^0(\mathcal O_{Z_1}(2d_1)) - h^0(\mathcal O_{Z_2}(3d_3)).
			\]
			In particular, if $d_1,d_2 \leq 1$, then $h^0(X^\gen, D^\gen) \leq h^0(Y,D)$.
		\end{lemma}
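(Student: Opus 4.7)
The strategy is to control $h^0(X^\gen, D^\gen)$ by passing to the three-component central fiber $\tilde{\mathcal X}_0 = \tilde X_0 \cup W_1 \cup W_2$ of the family $\tilde{\mathcal X}/(0 \in T')$ and extracting a numerical identity from the Mayer-Vietoris-style sequence (\ref{eq:ExactSeq_onReducibleSurface}). Proposition~\ref{prop:LineBundleOnReducibleSurface} provides the line bundle $\tilde{\mathcal D}$ on $\tilde{\mathcal X}_0$ whose restrictions are $\mathcal O_{\tilde X_0}(\iota_* D)$, $\mathcal O_{W_1}(d_1)$, and $\mathcal O_{W_2}(2d_2)$; by construction $\tilde{\mathcal D}$ deforms to $\mathcal O_{X^\gen}(D^\gen)$, so upper-semicontinuity of $h^0$ in flat families yields $h^0(X^\gen, D^\gen) \leq h^0(\tilde{\mathcal X}_0, \tilde{\mathcal D})$.

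The bulk of the work is then to compute $h^0(\tilde{\mathcal X}_0, \tilde{\mathcal D})$ from (\ref{eq:ExactSeq_onReducibleSurface}),
\[
0 \to \tilde{\mathcal D} \to \mathcal O_{\tilde X_0}(\iota_* D) \oplus \mathcal O_{W_1}(d_1) \oplus \mathcal O_{W_2}(2d_2) \xto{\phi} \mathcal O_{Z_1}(2d_1) \oplus \mathcal O_{Z_2}(3d_2) \to 0.
\]
The crucial step is showing that $\phi$ is surjective on global sections; taking $s_0 = 0$ reduces this to surjectivity of each individual restriction $H^0(\mathcal O_{W_i}(\cdot)) \to H^0(\mathcal O_{Z_i}(\cdot))$. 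For $i=1$, since $Z_1$ is a conic in $W_1 = \P^2$, the short exact sequence $0 \to \mathcal O_{\P^2}(d_1 - 2) \to \mathcal O_{\P^2}(d_1) \to \mathcal O_{Z_1}(2d_1) \to 0$ together with the vanishing $H^1(\mathcal O_{\P^2}(d_1-2)) = 0$ gives surjectivity. For $i=2$, since $Z_2 = (xy = z^3)$ is a degree-$3$ hypersurface in $W_2 = \P(1,2,1)$, the analogous sequence $0 \to \mathcal O_{W_2}(2d_2-3) \to \mathcal O_{W_2}(2d_2) \to \mathcal O_{Z_2}(3d_2) \to 0$ gives surjectivity because intermediate cohomology vanishes for every line bundle on a two-dimensional weighted projective space. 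Once $\phi$ is surjective on $H^0$, exactness yields
\[
h^0(\tilde{\mathcal D}) = h^0(\mathcal O_{\tilde X_0}(\iota_* D)) + h^0(\mathcal O_{W_1}(d_1)) + h^0(\mathcal O_{W_2}(2d_2)) - h^0(\mathcal O_{Z_1}(2d_1)) - h^0(\mathcal O_{Z_2}(3d_2)).
\]
Finally, applying Proposition~\ref{prop:CohomologyComparison_YtoX} to $\iota \colon Y \to \tilde X_0$, which contracts only $E_2$ when $n=3$, and using $(D.E_2) = 0$, we obtain $h^0(\tilde X_0, \iota_* D) = h^0(Y, D)$; combining all of the above gives the first inequality of the lemma.

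For the ``in particular'' clause, a direct case analysis shows $h^0(\mathcal O_{W_1}(d_1)) = h^0(\mathcal O_{Z_1}(2d_1))$ whenever $d_1 \leq 1$ (both equal $0$, $1$, $3$ for $d_1 \leq -1$, $0$, $1$ respectively, using $Z_1 \simeq \P^1$ with $\mathcal O_{W_1}(1)\big\vert_{Z_1} = \mathcal O_{Z_1}(2)$), and analogously $h^0(\mathcal O_{W_2}(2d_2)) = h^0(\mathcal O_{Z_2}(3d_2))$ when $d_2 \leq 1$ (values $0$, $1$, $4$, computed from the weights $(1,2,1)$ of $W_2$ and $Z_2 \simeq \P^1$). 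Hence the correction terms cancel, yielding $h^0(X^\gen, D^\gen) \leq h^0(Y, D)$. The only substantive technical point in the whole argument is the surjectivity of $\phi$ on sections; the rest is upper-semicontinuity, Proposition~\ref{prop:CohomologyComparison_YtoX}, and bookkeeping of dimensions.
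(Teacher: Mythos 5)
Your proposal is correct and follows essentially the same route as the paper: the exact sequence (\ref{eq:ExactSeq_onReducibleSurface}) for $\tilde{\mathcal D}$, surjectivity of the last map on global sections checked componentwise by setting $s_0=0$, Proposition~\ref{prop:CohomologyComparison_YtoX} to replace $\iota_*D$ by $D$, and upper-semicontinuity. The only (harmless) divergence is that you prove surjectivity of $H^0(\mathcal O_{W_i})\to H^0(\mathcal O_{Z_i})$ uniformly via the twisted ideal-sheaf sequences and vanishing of intermediate cohomology on $\P^2$ and $\P(1,2,1)$, whereas the paper treats $d_1,d_2\leq 1$ by an explicit monomial count and dismisses $d_1,d_2>1$ as clear; both yield the same cancellation of the correction terms in the ``in particular'' clause.
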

		\begin{proof}
			Since $(D. E_2) = 0$, we have $H^p(\tilde X_0,\iota_*D) \simeq H^p(Y,D)$ for all $p \geq 0$\,(Proposition~\ref{prop:CohomologyComparison_YtoX}). Recall that there exists a short exact sequence (introduced in (\ref{eq:ExactSeq_onReducibleSurface}))
			\begin{equation}
				0 \to \tilde{\mathcal D} \to \mathcal O_{\tilde X_0}(\iota_*D) \oplus \mathcal O_{W_1}(d_1) \oplus \mathcal O_{W_2}(2d_2) \to \mathcal O_{Z_1}(2d_1) \oplus \mathcal O_{Z_2}(3d_2) \to 0, \label{eq:ExactSeq_onReducibleSurface_SimplerVer}
			\end{equation}
			where $\tilde{\mathcal D}$ is the line bundle constructed in Proposition~\ref{prop:VectorBundleOnReducibleSurface}, and the notations $W_i$, $Z_i$ are explained in (\ref{eq:SecondWtdBlowupExceptional}). We first claim the following: if $d_1,d_2 \leq 1$, then the maps $H^0(\mathcal O_{W_1}(d_1)) \to H^0(\mathcal O_{Z_1}(2d_1))$ and $H^0(\mathcal O_{W_2}(2d_2)) \to H^0(\mathcal O_{Z_2}(3d_2))$ are isomorphisms. Only the nontrivial cases are $d_1 = 1$ and $d_2 = 1$. Since $Z_1$ is a smooth conic in $W_1 = \P^2$, there is a short exact sequence
			\[
				0 \to \mathcal O_{W_1}(-1) \to \mathcal O_{W_1}(1) \to \mathcal O_{Z_1}(2) \to 0.
			\]
			All the cohomology groups of $\mathcal O_{W_1}(-1)$ vanish, so $H^p( \mathcal O_{W_1}(1)) \simeq H^p(\mathcal O_{Z_1}(2))$ for all $p \geq 0$. In the case $d_2 = 1$, we consider
			\[
				 0 \to \mathcal I_{Z_2}(2) \to \mathcal O_{W_2}(2) \to \mathcal O_{Z_2}(3) \to 0,
			\]
			where $\mathcal I_{Z_2} \subset \mathcal O_{W_2}$ is the ideal sheaf of the closed subscheme $Z_2 = (xy  = z ^3 ) \subset \P_{x,y,z}(1,2,1)$. The ideal $(xy - z^3)$ does not contain any nonzero homogeneous element of degree $2$, so $H^0(\mathcal I_{Z_2}(2)) = 0$. This shows that $H^0( \mathcal O_{W_2}(2)) \to H^0(\mathcal O_{Z_2}(3))$ is injective. Furthermore, $H^0(\mathcal O_{W_2}(2))$ is generated by $x^2, xz, z^2, y$, hence $h^0(\mathcal O_{W_2}(2)) = h^0(\mathcal O_{Z_3}(3)) = 4$. This proves that $H^0(\mathcal O_{W_2}(2)) \simeq H^0(\mathcal O_{Z_3}(3))$, as desired. If $d_1,d_2>1$, it is clear that $H^0(\mathcal O_{W_1}(d_1)) \to H^0(\mathcal O_{Z_1}(2d_1))$ and $H^0(\mathcal O_{W_2}(2d_2)) \to H^0(\mathcal O_{Z_2}(3d_2))$ are surjective.
			
			The cohomology long exact sequence of (\ref{eq:ExactSeq_onReducibleSurface_SimplerVer}) begins with
			\begin{align*}
				0 \to H^0(\tilde{\mathcal D}) \to H^0(\iota_*D) \oplus H^0( \mathcal O_{W_1}(d_1) ) \oplus H^0(\mathcal O_{W_2}(2d_2)) \\
				\to H^0(\mathcal O_{Z_1}(2d_1)) \oplus H^0( \mathcal O_{Z_2}(3d_2)).\qquad\qquad
			\end{align*}
			By the previous arguments, the last map is surjective. Indeed, the image of $(0, s_1, s_2) \in H^0(\iota_*D) \oplus H^0( \mathcal O_{W_1}(d_1) ) \oplus H^0(\mathcal O_{W_2}(2d_2))$ is $(-s_1\big\vert_{Z_1}, -s_2\big\vert_{Z_2})$. The upper-semicontinuity of cohomologies establishes the inequality in the statement. \qedhere
		\end{proof}
		By \cite[Theorem~3.1]{Vial:Exceptional_NeronSeveriLattice}, it can be shown that the collection (\ref{eq:ExcColl_MaxLength}) in the theorem below is a numerically exceptional collection. Our aim is to prove that (\ref{eq:ExcColl_MaxLength}) is indeed an exceptional collection in $\D^{\rm b}(X^\gen)$. Before proceed to the theorem, we introduce one terminology.
		\begin{definition}
			During the construction of $Y$, the node of $p_*C_2$ is blown up twice, which corresponds to one of the two tangent directions\footnote{For example, the nodal curve $y^2 = x^3+x^2z$ has two tangent directions $y=\pm x$ at $(0,0,1)$.} at the node of $p_*C_2$. We refer to the tangent direction corresponding to the second blow up as the \emph{distinguished tangent direction} at the node of $p_*C_2$.
		\end{definition}
		\begin{theorem}\label{thm:ExceptCollection_MaxLength}
			Suppose $X^\gen$ is originated from a cubic pencil $\lvert \lambda p_* C_1 + \mu p_*C_2 \rvert$ which is generated by two general plane nodal cubics. Let $G_1^\gen,\ldots,G_{10}^\gen$ be as in Summary~\ref{summary:Divisors_onX^g}, let $G_0^\gen$ be the zero divisor, and let $G_{11}^\gen = 2G_{10}^\gen$. For notational simplicity, we denote the rank of $\Ext^p(G_i^\gen, G_j^\gen)(=H^p(-G_i^\gen+G_j^\gen))$ by $h^p_{ij}$. The values of $h_{ij}^p$ are described below. For example, the triple of ($G_9^\gen$-row, $G_{10}^\gen$-column), which is $(0\ 0\ 2)$, means that $(h^0_{9,10},\, h^1_{9,10},\, h^2_{9,10}) = (0,0,2)$.
			\begin{equation}
				\scalebox{0.9}{$
					\begin{array}{c|ccccc}
						& G_0^\gen & G_{1 \leq i \leq 8}^\gen & G_9^\gen & G_{10}^\gen & G_{11}^\gen \\[2pt] \hline%
						G_0^\gen & 1\,0\,0 & 0\,0\,1 &  0\,0\,1 & 0\,0\,3 & 0\,0\,6 \\
						G_{1 \leq i \leq 8}^\gen & & 1\,0\,0 & & 0\,0\,2 & 0\,0\,5\\
						G_9^\gen & & & 1\,0\,0 & 0\,0\,2 & 0\,0\,5		\\
						G_{10}^\gen & & & & 1\,0\,0 & 0\,0\,3 \\
						G_{11}^\gen & & & & & 1\,0\,0
					\end{array}
				$}\label{eq:thm:ExceptCollection_MaxLength}
			\end{equation}
			The symbol $G_{1 \leq i \leq 8}^\gen$ means $G_i^\gen$ for each $i=1,2,\ldots,8$. The blanks stand for $0\,0\,0$, and $h^p_{ij} = 0$ for all $p$ and $1 \leq i\neq j \leq 8$. In particular, the collection
			\begin{equation}
				\big\langle \mathcal O_{X^\gen}(G_0^\gen),\ \mathcal O_{X^\gen}(G_1^\gen),\ \ldots,\ \mathcal O_{X^\gen}(G_{10}^\gen),\ \mathcal O_{X^\gen}(G_{11}^\gen) \big\rangle \label{eq:ExcColl_MaxLength}
			\end{equation}
			is an exceptional collection of length $12$ in $\D^{\rm b}(X^\gen)$.
			\end{theorem}
		\begin{proof}
			Recall that (Summary~\ref{summary:Divisors_onX^g})
			\begin{align*}
				G_i^\gen &= -L^\gen + F_{i9}^\gen + 10K_{X^\gen},\ i=1,\ldots,8; \\
				G_9^\gen &= -L^\gen + 11K_{X^\gen}; \\
				G_{10}^\gen &= -3L^\gen + (p^*H - 3F_9)^\gen + 28K_{X^\gen}; \\
				G_{11}^\gen &= -6L^\gen + 2(p^*H - 3F_9)^\gen + 56K_{X^\gen}.
			\end{align*}
			The proof consists of numerous cohomology vanishings for which we divide into several steps. Note that we can always evaluate $\chi(-G_i^\gen + G_j^\gen) = \sum_p (-1)^p h^p_{ij}$, thus it suffices to compute only two (mostly $h^0$ and $h^2$) of $\{h^p_{ij} : p=0,1,2\}$.

			In the first part of the proof, we deduce the following using numerical methods.
			\begin{equation}
				\scalebox{0.9}{$
				\begin{array}{c|ccccc}
					& G_0^\gen & G_{1 \leq i \leq 8}^\gen & G_9^\gen & G_{10}^\gen & G_{11}^\gen \\[2pt] \hline%
					G_0^\gen 				& 1\,0\,0 & 0\,0\,1 & 0\,0\,1 & \scriptstyle\chi=3 & \scriptstyle\chi=6  \\
					G_{1 \leq i \leq 8}^\gen & 0\,0\,0 & 1\,0\,0 & 0\,0\,0 & 0\,0\,2 & \scriptstyle\chi=5 \\
					G_9^\gen				 &\scriptstyle\chi=0 & 0\,0\,0 & 1\,0\,0 & 0\,0\,2 &	\scriptstyle\chi=5	\\
					G_{10}^\gen 			&\scriptstyle\chi=0 &\scriptstyle\chi=0 &\scriptstyle\chi=0 & 1\,0\,0 & \scriptstyle\chi=3 \\
					G_{11}^\gen 			&\scriptstyle\chi=0 &\scriptstyle\chi=0 &\scriptstyle\chi=0 &\scriptstyle\chi=0 & 1\,0\,0
				\end{array}
				$}\label{eq:HumanPart_thm:ExceptCollection_MaxLength}
			\end{equation}
			The slots with $\chi=d$ means $\chi(-G_i^\gen+G_j^\gen)=\sum_{p}(-1)^ph^p_{ij} =d$. For those slots, we do not compute each $h^p_{ij}$ for the moment. In the end, they will be completed through another approach.
			\begin{enumerate}[fullwidth, itemsep=5pt minus 3pt, label=\bf{}Step~\arabic{enumi}., ref=\arabic{enumi}]
				\item\label{item:NumericalStep_thm:ExceptCollection_MaxLength} As explained above, the collection (\ref{eq:ExcColl_MaxLength}) is numerically exceptional, hence $\chi(-G_i^\gen + G_j^\gen) = \sum_p (-1)^p h^p_{ij}=0$ for all $0 \leq j < i \leq 11$. Furthermore, the surface $X^\gen$ is minimal, thus $K_{X^\gen}$ is nef. It follows that $h^0(D^\gen) = 0$ if $D^\gen$ is $K_{X^\gen}$-negative, and $h^2(D^\gen)=0$ if $D^\gen$ is $K_{X^\gen}$-positive. Since
				\[
					(K_{X^\gen} . G_i^\gen) = \left\{
						\begin{array}{ll}
							-1 & i \leq 9 \\
							-3 & i = 10 \\
							-6 & i = 11,
						\end{array}
					\right.
				\]
				these already enforce a number of cohomologies to be zero. Indeed, all the numbers in the following list are zero:
				\[
					\{ h^0_{0i} \}_{i \leq 11},\ \{ h^0_{i,10}, h^0_{i,11} \}_{i \leq 9},\ \{ h^2_{i0} \}_{i \leq 11},\ \{ h^2_{10,i}, h^2_{11,i} \}_{i \leq 9}.
				\]
				Also, since $G_{11}^\gen = 2G_{10}^\gen$, $h^0_{10,11} = h^0_{0,10} = 0$ and $h^2_{11,10} = h^2_{10,0} = 0$.
				\item\label{item:ProofFreePart_thm:ExceptCollection_MaxLength} If $1 \leq j \neq i \leq 8$, then $-G_i^\gen + G_j^\gen$ can be realized as the lifting of $-F_i + F_j$. Hence,
				\[
					\bigl\langle \mathcal O_{X^\gen}(G_1^\gen),\ \ldots,\ \mathcal O_{X^\gen}(G_8^\gen) \bigr\rangle
				\]
				is an exceptional collection by Proposition~\ref{prop:ExceptCollection_ofLengthNine}. This proves that $h^p_{ij}=0$ for all $p \geq 0$ and $1 \leq i \neq j \leq 8$. Also, $-G_9^\gen + G_i^\gen = -K_{X^\gen} + F_{i9}^\gen$ for $1 \leq i \leq 8$. Remark~\ref{rmk:ExceptCollection_SerreDuality} shows that $h^p_{9i} = h^p(-K_{X^\gen} + F_{i9}^\gen)=0$ for $p \geq 0$ and $1 \leq i \leq 8$. Furthermore, by Serre duality, $h^p_{i9} = h^{2-p}(F_{i9}^\gen)=0$ for all $p \geq 0$ and $1 \leq i \leq 8$.
				\item\label{item:Strategy_HumanPart_thm:ExceptCollection_MaxLength} We verify (\ref{eq:HumanPart_thm:ExceptCollection_MaxLength}) using the following strategy:
				\begin{enumerate}
					\item If we want to compute $h^0_{ij}$, then pick $D_{ij}^\gen := -G_i^\gen + G_j^\gen$. If the aim is to evaluate $h^2_{ij}$, then take $D_{ij}^\gen := K_{X^\gen} + G_i^\gen - G_j^\gen$, so that $h^2_{ij} = h^0(D_{ij}^\gen)$ by Serre duality.
					\item Express $D^\gen_{ij}$ in terms of $L^\gen$, $(p^*H - 3F_9)^\gen$, $F_{i9}^\gen$, and $K_{X^\gen}$. Via Summary~\ref{summary:Divisors_onX^g}, we can translate $L^\gen$, $(p^*H - 3F_9)^\gen$, $F_{i9}^\gen$ into the divisors on $Y$. Further, we have $6K_{X^\gen} = C_0^\gen$, $3K_{X^\gen} = E_1^\gen$, and $2K_{X^\gen} = (C_2+E_2+E_3)^\gen$, thus an arbitrary integer multiple of $K_{X^\gen}$ also can be translated into divisors on $Y$. Together with these translations, use Lemma~\ref{lem:DummyBundle} to find a Cartier divisor $D_{ij}$ on $Y$ which lifts to $D_{ij}^\gen$, and which satisfies $(D_{ij}.C_1) \leq 2$, $(D_{ij}.C_2) \leq 3$, $(D_{ij}.E_2)=0$.
					\item Compute an upper bound of $h^0(D_{ij})$. Then by Lemma~\ref{lem:H0Computation}, $h^0(D_{ij}^\gen) \leq h^0(D_{ij})$.
					\item In any occasions, we will find that the upper bound obtained in (3) coincides with $\chi(-G_i^\gen + G_j^\gen)$. Also, at least one of $\{ h^0_{ij}, h^2_{ij}\}$ is zero by Step~\ref{item:NumericalStep_thm:ExceptCollection_MaxLength}. From this we deduce $h^0(D_{ij}^\gen)\geq{}$(the upper bound obtained in (3)), hence the equality holds. Consequently, the numbers $\{h^p_{ij} : p=0,1,2\}$ are evaluated.
				\end{enumerate}
				\item We follow the strategy in Step~\ref{item:Strategy_HumanPart_thm:ExceptCollection_MaxLength} to verify (\ref{eq:HumanPart_thm:ExceptCollection_MaxLength}). Let $i \in \{1,\ldots,8\}$. To verify $h^0_{i0}=0$, we take $D_{i0}^\gen = -G_i^\gen = L^\gen - F_{i9}^\gen - 10K_{X^\gen}$. Translation into the divisors on $Y$ gives:
				\[
					D_{i0}' = p^*(2H) + F_9-F_i - 2C_0 + (C_2+E_2+E_3)
				\]
				Since $(D_{i0}'.C_1) = 6$ and $(D_{i0}' . C_2) = 3$, we replace the divisor $D_{i0}'$ by $D_{i0} := D_{i0}' + C_1$ so that the condition $(D_{i0}.C_1) \leq 2$ is fulfilled. Now, $h^0(D_{i0})=0$ by Dictionary~\ref{dictionary:H0Computations}\ref{dictionary:-G_i}, thus $h^0_{i0} \leq h^0(D_{i0}) = 0$ by Lemma~\ref{lem:H0Computation}. Finally, $\chi(-G_i^\gen)=0$ and $h^2_{i0}=0$\,(Step~\ref{item:NumericalStep_thm:ExceptCollection_MaxLength}), hence $h^1_{i0}=0$.
					
				We repeat this routine to the following divisors:
				\begin{align*}
					D_{0i} &= p^*(2H) + F_9 - F_i - C_0 + C_1 - E_1 + (2C_2+E_2); \\
					D_{09} &= p^*(2H) - 2C_0 + C_1 + (C_2 + E_2 + E_3); \\
					D_{i,10} &= p^*(3H) + 2F_9 + F_i - 2C_0 + 2C_1 - E_1 - (C_2 + E_2 + E_3) + 2(2C_2+E_2); \\
					D_{9,10} &= p^*(3H) + 3F_9 - 3C_0 + 3C_1 + (C_2+E_2+E_3) + (2C_2+E_2).
				\end{align*}
				Together with Dictionary~\ref{dictionary:H0Computations}\hyperref[dictionary:Nonvanish_K-G_i]{(2--5)}, all the slots of (\ref{eq:HumanPart_thm:ExceptCollection_MaxLength}) are verified.
				\item\label{item:M2Part_thm:ExceptCollection_MaxLength} It is difficult to complete (\ref{eq:thm:ExceptCollection_MaxLength}) using the numerical argument\,(see for example, Remark~\ref{rmk:Configuration_andCohomology}). We introduce another plan to overcome these difficulties.
				\begin{enumerate}
					\item Take $D_{ij}^\gen \in \Pic X^\gen$ and $D_{ij} \in \Pic Y$ as in Step~\hyperref[item:Strategy_HumanPart_thm:ExceptCollection_MaxLength]{3(1--2)}. We may assume $(D_{ij}.C_1) \in \{0,2\}$ and $(D_{ij}.C_2) \in \{-3,0,3\}$. If $(D_{ij}.C_2) = -3$, then $(D_{ij} - C_2 \mathbin. E_2) = -1$, thus $h^0(D_{ij}) = h^0(D_{ij} - C_2 - E_2)$. Hence, we replace $D_{ij}$ by $D_{ij} - C_2 -E_2$ if $(D.C_2) = -3$. In some occasions, we have $(D_{ij} . F_9) = -1$. We make further replacement $D_{ij} \mapsto D_{ij}-F_9$ for those cases.
					\item Rewrite $D_{ij}$ in terms of the $\Z$-basis $\{p^*H, F_1,\ldots, F_9, E_1,E_2,E_3\}$ so that $D_{ij}$ is expressed in the following form:
					\[
						D_{ij} = p^*(dH) - \bigl( \text{sum of exceptional curves of }p \colon Y \to \P^2\bigr).
					\]
					\item\label{item:PlaneCurveExistence_thm:ExceptCollection_MaxLength} If $h^0(D_{ij}) > 0$, then we consider an effective divisor $D$ which is linearly equivalent to $D_{ij}$. Then, $p_*D$ is the plane curve of degree $d$ which satisfied the conditions imposed by the exceptional part of $D_{ij}$. Let $\mathcal I_{\rm C} \subset \mathcal O_{\P^2}$ be the ideal sheaf associated with the imposed conditions on $p_*D$. Then the curve $p_*D$ contributes to the number $h^0(\mathcal O_{\P^2}(d) \otimes \mathcal I_{\rm C})$. Indeed, this number gives an upper bound of $h^0(D_{ij})$\,(it is clear that if $D'$ is an effective divisor linearly equivalent to $D$ such that $p_*D$ and $p_*D'$ coincide as plane curves, then $D$ and $D'$ must be the same curve in $Y$).
					\item As in Step~\hyperref[item:Strategy_HumanPart_thm:ExceptCollection_MaxLength]{3(4)}, we will see that all the upper bounds $h^0(D_{ij})$ coincide with the numerical invariants $\chi(-G_i^\gen + G_j^\gen)$. Thus, the upper bounds $h^0(D_{ij})$ obtained in (3) determine $\{h^p_{ij} : p=0,1,2\}$ precisely.
				\end{enumerate}
					\item As explained in Remark~\ref{rmk:Configuration_andCohomology}, the value $h^0(D_{ij})$ might depend on the configuration of $p_*C_1$ and $p_*C_2$. However, for general $p_*C_1 = (h_1=0)$, $p_*C_2 = (h_2=0)$, the minimum value of $h^0(D_{ij})$ is attained. This can be observed in the following way. Let $h = \sum_{\alpha} a_\alpha \mathbf{x}^\alpha$ be a homogeneous equation of degree $d$, where the sum is taken over the $3$-tuples $\alpha = (\alpha_x, \alpha_y, \alpha_z)$ with $\alpha_x + \alpha_y + \alpha_z = d$ and $\mathbf{x}^\alpha = x^{\alpha_x} y^{\alpha_y} z^{\alpha_z}$. Then the ideal $\mathcal I_{\rm C}$ imposes linear relations on $\{a_\alpha\}_\alpha$, thus we get a linear system, or equivalently, a matrix $M$, with the variables $\{a_\alpha\}_\alpha$. After perturbing $h_1$ and $h_2$, the rank of $M$ would not decrease since \{$\op{rank} M \geq r_0$\} is an open condition for any fixed $r_0$. From this we conclude: if $h^0(D_{ij}) \leq r$ for at least one pair of $p_*C_1$ and $p_*C_2$, then $h^0(D_{ij}) \leq r$ for general $p_*C_1$ and $p_*C_2$.
					\item\label{item:M2Configuration_thm:ExceptCollection_MaxLength} Let $h_1 = (y-z)^2z - x^3 - x^2z$ and $h_2 = x^3 - 2xy^2 + 2xyz + y^2z$. These equations define plane nodal cubics such that
					\begin{enumerate}
						\item $p_*C_1$ has the node at $[0,1,1]$, and $p_*C_2$ has the node at $[0,0,1]$;
						\item $p_*C_2$ has two tangent directions ($y=0$ and $y=-2x$) at nodes;
						\item $p_*C_1 \cap p_*C_2$ contains two $\Q$-rational points, namely $[0,1,0]$ and $[-1,1,1]$.
					\end{enumerate}
					We take $y=0$ as the distinguished tangent direction at the node of $p_*C_2$, and take $p_*F_9 = [0,1,0]$, $p_*F_8 = [-1,1,1]$. The ideals in Table~\ref{table:Ideal_ofConditions_thm:ExceptCollection_MaxLength} are the building blocks of the ideal $\mathcal I_{\rm C}$ introduced in Step~\hyperref[item:PlaneCurveExistence_thm:ExceptCollection_MaxLength]{5(3)}.
					\[
						\begin{array}{c|c|c|c}
							\text{symbol} & \text{ideal form} & \text{ideal sheaf at the\,...} & \text{divisor on }Y \\ \hline
							\mathcal I_{E_1} & (x,y-z) & \text{node of }p_*C_1 & -E_1 \\
							\mathcal I_{E_2+E_3} & (x,y) & \text{node of }p_*C_2 & -(E_2+E_3) \\
							\mathcal I_{E_2+2E_3} & (x^2,y) & %
								\begin{tabular}{c}
									\footnotesize distinguished tangent \\[-5pt]
									\footnotesize at the node of $p_*C_2$
								\end{tabular} %
							& -(E_2+2E_3) \\
							\mathcal J_9 & (h_1,h_2) &  \text{nine base points} & - \sum_{i\leq9} F_i \\
							\mathcal J_7 & \scriptstyle \mathcal J_9 {\textstyle/} (x+z,y-z)(x,z) & \text{seven base points} & -\sum_{i\leq7} F_i \\
							\mathcal J_8 & (x+z,y-z)\mathcal J_7 & \text{eight base points} & -\sum_{i\leq8} F_i
						\end{array}
					\]\nopagebreak\vskip-\baselineskip\captionof{table}{The ideals associated with the exceptional divisors}\label{table:Ideal_ofConditions_thm:ExceptCollection_MaxLength}\vskip+0.33\baselineskip
					Note that the nine base points contain $[0,1,0]$ and $[-1,1,1]$, thus there exists an ideal $\mathcal J_7$ such that $\mathcal J_9 = (x+z,y-z)(x,z) \mathcal J_7$.
					\item We sketch the proof of $h^p_{10,9}=h^p(-G_{10}^\gen + G_9^\gen)=0$, which illustrates several subtleties. Since $h^2_{10,9}=0$ by Step~\ref{item:NumericalStep_thm:ExceptCollection_MaxLength}, we only have to prove $h^0_{10,9}=0$. Thus, we take $D_{10,9}^\gen := -G_{10}^\gen + G_9^\gen$. As in Step~\hyperref[item:Strategy_HumanPart_thm:ExceptCollection_MaxLength]{3(2)}, take $D_{10,9}' = p^*(3H) + 3F_9 - 2C_0 + 2C_1 - E_1 - (C_2+E_2+E_3) + 2(2C_2+E_2)$. We have $(D_{10,9}'.C_2)=-3$, and $(D_{10,9}'-C_2-E_2 \mathbin. F_9) = -1$. Let $D_{10,9}:= D_{10,9}' - C_2-E_2-F_9$. Then, $h^0(D_{10,9}) = h^0(D_{10,9}') \geq h^0(D_{10,9}^\gen)$. As in Step~\hyperref[item:M2Part_thm:ExceptCollection_MaxLength]{5(2)}, the divisor $D_{10,9}$ can be rewritten as
					\[
						D_{10,9} = p^*(9H) - 2 \sum_{i=1}^8 F_i - 5E_1 - 4E_2 - 7E_3.
					\]
					Since $\mathcal I_{E_2+E_3}^2$ imposes more conditions than $\mathcal I_{E_2+2E_3}$, the ideal of (minimal) conditions corresponding to $-4E_2 - 7E_3$ is $\mathcal I_{E_2+E_3} \cdot \mathcal I_{E_2+2E_3}^3$. Thus, the plane curve $p_*D_{10,0}$ corresponds to a nonzero section of
					\[
						H^0(\mathcal O_{\P^2}(9) \otimes \mathcal J_8^2 \cdot \mathcal I_{E_1}^5 \cdot \mathcal I_{E_2+E_3} \cdot \mathcal I_{E_2+2E_3}^3 ).
					\]
					Using Macaulay2, we find that the rank of this group is zero. This can be found in \texttt{ExcColl\_Dolgachev.m2}\,\cite{ChoLee:Macaulay2}. In a similar way, we obtain the following table (be aware of the difference with (\ref{eq:thm:ExceptCollection_MaxLength})).
					\begin{equation}
						\scalebox{0.9}{$
						\begin{array}{c|ccccc}
							& G_0^\gen & G_8^\gen & G_9^\gen & G_{10}^\gen & G_{11}^\gen \\[2pt] \hline%
							G_0^\gen & 1\,0\,0 & 0\,0\,1 &  0\,0\,1 & 0\,0\,3 & 0\,0\,6 \\
							G_8^\gen & & 1\,0\,0 & & 0\,0\,2 & 0\,0\,5\\
							G_9^\gen & & & 1\,0\,0 & 0\,0\,2 & 0\,0\,5		\\
							G_{10}^\gen & & & & 1\,0\,0 & 0\,0\,3 \\
							G_{11}^\gen & & & & & 1\,0\,0
						\end{array}
						$}
						\label{eq:M2Computation_thm:ExceptCollection_MaxLength}
					\end{equation}
					Table~\ref{table: Macaualy2 Computations} gives a short summary on the computations done in \texttt{ExcColl\_Dolgachev.m2}\,\cite{ChoLee:Macaulay2}.
					\[
						\scalebox{0.9}{$
						\begin{array}{c|c|l}
							(i,j) & \text{result} & \multicolumn{1}{c}{\text{choice of }D_{ij}} \\ \hline
							(9,0) & h^0_{9,0}=0 & p^*(5H) - \sum_{i\leq9} F_i - 3E_1 - 2E_2 - 4E_3 \\
							(10,0) & h^0_{10,0}=0 & p^*(14H) - 3\sum_{i\leq8} F_i - 8E_1 - 6E_2-11E_3 \\
							(10,8) & h^0_{10,8}=0 & p^*(9H) - 2\sum_{i\leq7} F_i - F_8 - 6E_1 - 3E_2 - 6E_3 \\
							(10,9) & h^0_{10,9}=0 & p^*(9H) - 2\sum_{i\leq8} F_i - 5E_1 - 4E_2 - 7E_3 \\
							(11,0) & h^0_{11,0}=0 & p^*(31H) - 7\sum_{i\leq8} F_i - F_9 - 18E_1 - 11E_2 - 22E_3 \\
							(11,8) & h^0_{11,8}=0 & p^*(26H) - 6\sum_{i\leq7} F_i - 5F_8 - F_9 - 14E_1 - 10E_2 - 20E_3 \\
							(11,9) & h^0_{11,9}=0 & p^*(26H) - 6\sum_{i\leq8} F_i - 15E_1 - 9E_2 - 18E_3 \\[3pt] \hline
							(0,10) & h^2_{0,10}=3 & p^*(17H) - 4\sum_{i\leq8} F_i - F_9 - 9E_1 - 6E_2 - 12E_3 \\
							(0,11) & h^2_{0,11}=6 & p^*(31H) - 7\sum_{i\leq8} F_i - F_9 - 17E_1 - 12E_2 - 23E_3 \\
							(8,11) & h^2_{8,11}=5 & p^*(26H) - 6\sum_{i\leq7} F_i - 5F_8 - F_9 - 15E_1 - 9E_2 - 18E_3 \\
							(9,11) & h^2_{9,11}=5 & p^*(26H) - 6\sum_{i\leq8} F_i - 14E_1 - 10E_2 - 19E_3
						\end{array}
						$}
					\]\nopagebreak\vskip-\baselineskip\captionof{table}{Summary of the Macaulay2 computations}\label{table: Macaualy2 Computations}\vskip+0.33\baselineskip
					Note that the numbers $h_{11,10}^p$ and $h_{10,11}^p$ are computed freely; indeed, $-G_{11}^\gen + G_{10}^\gen = -G_{10}^\gen$, thus $h^p_{11,10} = h^p_{10,0}$ and $h^p_{10,11}=h^p_{0,10}$. Finally, perturb the cubics $p_*C_1$ and $p_*C_2$ so that (\ref{eq:M2Computation_thm:ExceptCollection_MaxLength}) remains valid and Lemma~\ref{lem:BasePtPermutation} is applicable. Then, (\ref{eq:thm:ExceptCollection_MaxLength}) is verified immediately. \qedhere
			\end{enumerate}
		\end{proof}
		\begin{remark}\label{rmk:Configuration_andCohomology}
			Assume that the nodal curves $p_*C_1$, $p_*C_2$ are in a special position so that the node of $p_*C_1$ is located on the distinguished tangent line at the node of $p_*C_2$. Then, the proper transform $\ell$ of the unique line through the nodes of $p_*C_1$ and $p_*C_2$ has the following divisor expression:
			\[
				\ell = p^*H - E_1 - (E_2 + 2E_3).
			\]
			In particular, the divisor $D_{90} = p^*(5H) - \sum_{i\leq9} F_i - 3E_1 - 2(E_2+2E_3)$ is linearly equivalent to $2 \ell + C_1 + E_1$, thus $h^0(D_{90}) > 0$. Consequently, for this particular configuration of $p_*C_1$ and $p_*C_2$, we cannot prove $h^0_{90} = 0$ using upper-semicontinuity. However, the numerical method (Step~\ref{item:Strategy_HumanPart_thm:ExceptCollection_MaxLength} in the proof of the previous theorem) cannot detect such variances originated from the position of nodal cubics, hence it cannot be applied to the proof of $h^0_{90}=0$.
		\end{remark}
		The following lemma, used in the end of the proof of Theorem~\ref{thm:ExceptCollection_MaxLength}, illustrates the symmetric nature of $F_1,\ldots,F_8$.
		\begin{lemma}\label{lem:BasePtPermutation}
			Assume that $X^\gen$ is originated from a cubic pencil generated by two general plane nodal cubics $p_*C_1$ and $p_*C_2$. Let $D \in \Pic Y$ be a divisor on the rational elliptic surface $Y$. Assume that in the expression of $D$ in terms of $\Z$-basis $\{p^*H, F_1,\ldots, F_9, E_1, E_2, E_3\}$, the coefficients of $F_1,\ldots, F_8$ are the same. Then, $h^p(D+F_i) = h^p(D+F_j)$ for any $p \geq 0$ and $1 \leq i,j \leq 8$.
		\end{lemma}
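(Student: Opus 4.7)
The plan is to prove this via a monodromy / genericity argument, exploiting that for a generic pair of nodal cubics the eight non-distinguished base points play symmetric roles. Concretely, consider the moduli space $\mathcal{N}$ parameterizing triples $(p_*C_1, p_*C_2, p_{F_9})$, where $(p_*C_1, p_*C_2)$ is a general pair of nodal cubics meeting transversally in nine distinct points and $p_{F_9}$ is a marked distinguished base point. The space $\mathcal{N}$ is irreducible, being a Zariski-open subset of a fibration of projective spaces. Over $\mathcal{N}$, the orderings of the remaining eight base points define an \'etale cover $\widetilde{\mathcal{N}} \to \mathcal{N}$ of degree $8!$; by genericity the monodromy representation $\pi_1(\mathcal{N}, \ast) \to S_8$ is surjective.

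Because the coefficients of $F_1,\ldots,F_8$ in $D$ coincide, $D$ is $S_8$-invariant and extends to a relative Cartier divisor $\mathcal{D}$ on the universal family $\mathcal{Y} \to \mathcal{N}$ (built from the construction of Section~\ref{sec:Construction}). For any transposition $\sigma = (i\,j) \in S_8$, choose a loop $\gamma \subset \mathcal{N}$ based at the point defining $Y$ whose monodromy realizes $\sigma$. Pulling back along $\gamma$ gives a smooth family $\mathcal{Y}_\gamma \to \gamma$ of rational elliptic surfaces with $(\mathcal{Y}_\gamma)_0 \simeq (\mathcal{Y}_\gamma)_1 \simeq Y$, together with a relative Cartier divisor $\mathcal{D} + \mathcal{F}$ whose restrictions at $t=0$ and $t=1$ equal $D + F_i$ and $D + F_j$ respectively under these identifications. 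By the semicontinuity theorem, $t \mapsto h^p(\mathcal{Y}_t, \mathcal{O}(\mathcal{D} + \mathcal{F})|_{\mathcal{Y}_t})$ is upper semicontinuous on $\gamma$ and attains its generic value on a Zariski-open dense subset. Since $(p_*C_1, p_*C_2)$ is sufficiently general, both endpoints $t=0,1$ lie in this generic locus, whence $h^p(D+F_i) = h^p(D+F_j)$.

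The main obstacle I anticipate is establishing the surjectivity of the monodromy onto $S_8$. The natural approach is a Picard-Lefschetz analysis: to realize the transposition $(i\,j)$, degenerate along a path in the closure of $\mathcal{N}$ to a boundary point where the two base points $p_{F_i}$ and $p_{F_j}$ collide into a single tacnode of the two cubics, and then traverse a small loop around this boundary locus. Genericity ensures that such a degeneration can be performed within the parameter space of admissible pencils without disturbing $p_{F_9}$; one then invokes the standard fact that vanishing cycles produce transpositions in the resulting monodromy representation on orderings of the remaining intersection points, and these transpositions generate $S_8$.
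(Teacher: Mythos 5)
Your approach is genuinely different from the paper's. You use geometric monodromy: the eight unmarked base points are permuted by the fundamental group of the parameter space of marked pencils, the divisor class $D$ is invariant under this action because its $F_1,\ldots,F_8$-coefficients agree, and semicontinuity on the (irreducible) cover parametrizing \emph{ordered} configurations forces $h^p(D+F_i)=h^p(D+F_j)$ at a general point of the base. The paper instead runs the arithmetic analogue of this argument: after a $\op{PGL}(3)$ change of coordinates it arranges that $p_*F_9$, the nodes, the distinguished tangent and the two cubics are all defined over $\Q$, and that the $x$- and $y$-coordinates of $p_*F_1,\ldots,p_*F_8$ have distinct irreducible degree-$8$ minimal polynomials (an open condition checked via resultants). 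A field automorphism $\tau\in\op{Aut}(\C/\Q)$ with $\tau(\alpha_i)=\alpha_j$ then induces an honest automorphism $\tau_Y$ of $Y=Y_\Q\times_\Q\C$ fixing $F_9,E_1,E_2,E_3$ and $D$ while sending $F_i$ to $F_j$, so the cohomology equality is immediate --- no semicontinuity and no analysis of a universal family are needed. What the paper's route buys is precisely that the ``transitivity of the symmetry group'' is reduced to an elementary irreducibility statement about one polynomial over $\Q$.

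The genuine gap in your proposal is the transitivity (a fortiori the surjectivity onto $S_8$) of the monodromy, which you yourself flag as the main obstacle and only sketch. The classical fact that the Galois group of the nine intersection points of two general plane cubics is $S_9$ does not automatically descend to your family: you restrict to pairs of \emph{nodal} cubics with a marked ninth base point and a marked tangent branch at the node of $p_*C_2$, and passing to a subvariety of the parameter space can only shrink the monodromy group, so transitivity on the remaining eight points must be re-established. Your Picard--Lefschetz sketch also has the usual circularity: to realize the transposition $(i\,j)$ you must degenerate the points \emph{labelled} $i$ and $j$ to a tacnode, but the labels only exist on the cover, so knowing that every labelled pair can be collided already presupposes connectivity of the cover; moreover, one transposition plus transitivity does not generate $S_8$ without a primitivity argument (Jordan's theorem). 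For the lemma you only need transitivity, so the cleanest repair is to prove directly that the space of ordered $8$-point configurations over your base is irreducible --- but that is exactly the nontrivial content, and as written it is asserted rather than proved. A secondary, fixable imprecision: semicontinuity should be invoked on the irreducible algebraic cover $\widetilde{\mathcal N}$ (so that the jump locus is a proper closed subset whose image in $\mathcal N$ is avoided by a general pencil), not ``on the real loop $\gamma$,'' which has no Zariski topology to speak of.
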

		\begin{proof}
			Since $\op{Aut}\P^2 = \op{PGL}(3,\C)$ sends arbitrary 4 points\,(of which any three are not colinear) to arbitrary 4 points\,(of which any three are not colinear), we may assume the following.
			\begin{enumerate}
				\item\label{item:lem:BasePtPermutation_DistinguishedBasePt} The base point $p_*F_9$ is $\Q$-rational.
				\item The nodes of $p_*C_1$ and $p_*C_2$ are $\Q$-rational.
				\item The distinguished tangent direction at the node of $p_*C_2$ is defined over $\Q$.
			\end{enumerate}
			Now, let $K$ be the extension field over $\Q$ which is generated by the coefficients of the cubic forms defining $p_*C_1$, $p_*C_2$. Since $p_*C_1$, $p_*C_2$ are general, we may assume the following:
			\begin{enumerate}[resume]
				\item\label{item:lem:BasePtPermutation_affineBasePts} The base points $p_*F_1,\ldots,p_*F_9$ are contained in the affine space $(z \neq 0) \subset \P^2_{x,y,z}$.
				\item\label{item:lem:BasePtPermutation_Resultant} Let $h_i \in K[x,y,z]$ be the defining equation of $p_*C_i$, and let $\op{res}(h_1,h_2;x)$ be the resultant of $h_1(x,y,1)$, $h_2(x,y,1)$ regarded as elements in $(K[x])[y]$. The irreducible factorization of $\op{res}(h_1,h_2;x)$ over $K$ consists of a linear form and an irreducible polynomial, say $H_x$, of degree $8$. The same holds for $\op{res}(h_1,h_2;y)$, {\it i.e.} $\op{res}(h_1,h_2;y) = (y-c) H_y$ for an irreducible polynomial $H_y \in K[y]$ of degree $8$. We assume further that $H_x \neq H_y$ up to multiplication by $K^\times$.

			\end{enumerate}
			The last condition has the following interpretation. Let $p_* F_i = [\alpha_i,\beta_i,1] \in \P^2$ for $\alpha_i,\beta_i \in \C$ and $i=1,\ldots,9$. The resultant $\op{res}(h_1,h_2;x) \in K[x]$ is the polynomial having $\{\alpha_i\}_{i=1}^9$ as the solutions. By the conditions \ref{item:lem:BasePtPermutation_DistinguishedBasePt}, $\alpha_9 \in \Q$, so a linear factor must appear in $\op{res}(h_1,h_2;x)$. Hence, \ref{item:lem:BasePtPermutation_Resultant} implies that $\alpha_1,\ldots,\alpha_8$ are Galois conjugate over $K$, which should be true for general $p_*C_1$, $p_*C_2$. The same is assumed to be true for $\beta_1,\ldots,\beta_8$, and the final sentence says that $\{\alpha_1,\ldots,\alpha_8\} \neq \{\beta_1,\ldots,\beta_8\}$.
			
			
			Let $\tau \in \op{Aut}(\C/K)$ be a field automorphism fixing $K$, and mapping $\alpha_i$ to $\alpha_j$\,($1 \leq i,j \leq 8$). Then $\tau$ induces an automorphism of $\P^2$ which fixes $p_*C_1$ and $p_*C_2$. It follows that $[\alpha_j, \tau(\beta_i), 1]$ is one of the eight base points $\{p_*F_i\}_{i=1}^8$. Since $H_x$ and $H_y$ are different up to multiplication by $K^\times$, there is no point of the form $[\alpha_j, \beta_k, 1]$ in the set $\{p_*F_i\}_{i=1}^8$ except when $k=j$. It follows that $\tau(\beta_i) = \beta_j$. Let $\tau_Y \colon Y \to Y$ be the automorphism induced by $\tau$. According to the assumptions \ref{item:lem:BasePtPermutation_DistinguishedBasePt}--\ref{item:lem:BasePtPermutation_Resultant}, it satisfies the following properties:
			\begin{enumerate}[label=(\arabic{enumi})]
				\item $\tau_Y$ fixes $F_9, E_1, E_2, E_3$;
				\item $\tau_Y$ permutes $F_1,\ldots,F_8$;
				\item $\tau_Y$ maps $F_i$ to $F_j$.
			\end{enumerate}
			Furthermore, since the coefficients of $F_1,\ldots,F_8$ are the same in the expression of $D$, $\tau_Y$ fixes $D$. It follows that $\tau_Y^* \colon \Pic Y \to \Pic Y$ maps $D+F_j$ to $D+F_i$. In particular, $H^p(D+F_j) = H^p(\tau_Y^*(D+F_i)) \simeq H^p( D+F_i)$ for any $1 \leq i,j \leq 8$. \qedhere
		\end{proof}
		\subsection{Incompleteness of the collection}\label{subsec:Incompleteness} Let $\mathcal A \subset \D^{\rm b}(X^\gen)$ be the orthogonal subcategory
		\[
			\bigl\langle \mathcal O_{X^\gen}(G_0^\gen),\ \mathcal O_{X^\gen}(G_1^\gen),\ \ldots,\ \mathcal O_{X^\gen}(G_{11}^\gen) \bigr\rangle^\perp,
		\]	
		so that there exists a semiorthogonal decomposition
		\[
			\D^{\rm b}(X^\gen) = \bigl\langle \mathcal A,\ \mathcal O_{X^\gen}(G_0^\gen),\ \mathcal O_{X^\gen}(G_1^\gen),\ \ldots,\ \mathcal O_{X^\gen}(G_{11}^\gen) \bigr\rangle.
		\]
		We will prove that $K_0(\mathcal A) = 0$, $\op{HH}_\bullet(\mathcal A) = 0$, but $\mathcal A\not\simeq 0$. Such a category is called a \emph{phantom} category. To give a proof, we claim that the \emph{pseudoheight} of the collection~(\ref{eq:ExcColl_MaxLength}) is at least $2$. Once we achieve the claim, \cite[Corollary~4.6]{Kuznetsov:Height} implies that $\op{HH}^0(\mathcal A) \simeq \op{HH}^0(X^\gen) = \C$, thus $\mathcal A\not\simeq 0$.
		\begin{definition}\ 
			\begin{enumerate}
				\item Let $E_1,E_2$ be objects in $\D^{\rm b}(X^\gen)$. The \emph{relative height} $e(E_1,E_2)$ is the minimum of the set
				\[
					\{ p : \Hom(E_1,E_2[p]) \neq 0 \} \cup \{ \infty \}.
				\]
				\item Let $\langle F_0,\ldots,F_m\rangle$ be an exceptional collection in $\D^{\rm b}(X^\gen)$. The \emph{anticanonical pseudoheight} is defined by
				\[
					\op{ph}_{\rm ac}(F_0,\ldots,F_m) = \min \Bigl ( \sum_{i=1}^p e(F_{a_{i-1}}, F_{a_i}) + e(F_{a_p} , F_{a_0} \otimes \mathcal O_{X^\gen}(-K_{X^\gen})) - p \Bigr),
				\]
				where the minimum is taken over all possible tuples $0 \leq a_0 < \ldots < a_p \leq m$.
			\end{enumerate}
		\end{definition}
		The pseudoheight is given by the formula $\op{ph}(F_0,\ldots,F_m) = \op{ph}_{\rm ac}(F_0,\ldots,F_m) + \dim X^\gen$, thus it suffices to prove that $\op{ph}_{\rm ac}(G_0^\gen,\ldots,G_{11}^\gen) \geq 0$.
		\begin{corollary}\label{cor:Phantom}
			In the semiorthogonal decomposition
			\[
				\D^{\rm b}(X^\gen) = \bigl \langle \mathcal A,\ \mathcal O_{X^\gen}(G_0^\gen),\ \ldots,\ \mathcal O_{X^\gen}(G_{11}^\gen)\bigr\rangle,
			\]
			we have $K_0(\mathcal A) = 0$ and $\op{HH}_\bullet(\mathcal A)=0$. Also, $\op{ph}_{\rm ac}(G_0^\gen,\ldots,G_{11}^\gen) = 2$, thus the restriction map $\op{HH}^p(X^\gen) \to \op{HH}^p(\mathcal A)$ is an isomorphism for $p \leq 2$ and is a monomorphism for $p=3$. In particular, $\op{HH}^0(\mathcal A) \simeq \C$.
		\end{corollary}
		\begin{proof}
			Since $\kappa(X^\gen) = 1$, the Bloch conjecture holds for $X^\gen$\,\cite[\textsection11.1.3]{Voisin:HodgeTheory2}. Thus the Grothendieck group $K_0(X^\gen)$ is a free abelian group of rank $12$\,(see for {\it e.g.} \cite[Lemma~2.7]{GalkinShinder:Beauville}). Furthermore, Hochschild-Kostant-Rosenberg isomorphism for Hochschild homology says
			\[
				\op{HH}_k(X^\gen) \simeq \bigoplus_{q-p=k} H^{p,q}(X^\gen),
			\]
			hence, $\op{HH}_\bullet(X^\gen) \simeq \C^{\oplus 12}$. It is well-known that $K_0$ and $\op{HH}_\bullet$ are the additive invariants with respect to semiorthogonal decompositions, thus $K_0(X^\gen) \simeq K_0(\mathcal A) \oplus K_0({}^\perp\mathcal A)$, and $\op{HH}_\bullet(X^\gen) = \op{HH}_\bullet(\mathcal A) \oplus \op{HH}_\bullet({}^\perp \mathcal A)$.\footnote{By definition of $\mathcal A$, ${}^\perp \mathcal A$ is the smallest full triangulated subcategory containing the collection (\ref{eq:ExcColl_MaxLength}) in Theorem~\ref{thm:ExceptCollection_MaxLength}.} If $E$ is an exceptional vector bundle, then $\D^{\rm b}(\langle E\rangle ) \simeq \D^{\rm b}(\Spec \C)$ as $\C$-linear triangulated categories, thus $K_0({}^\perp\mathcal A) \simeq \Z^{\oplus 12}$ and $\op{HH}_\bullet({}^\perp\mathcal A)\simeq \C^{\oplus12}$. It follows that $K_0(\mathcal A) = 0$ and $\op{HH}_\bullet(\mathcal A)=0$.

			Assume the chain $0\leq a_0 < \ldots < a_p \leq 11$ has length $p=0$. Then, $e(G_{a_0}^\gen,G_{a_0}^\gen-K_{X^\gen}) = 2$ since $\dim \Ext_{X^\gen}^p(G_i^\gen, G_i^\gen - K_{X^\gen}) = h^p(-K_{X^\gen}) = 1$ for $p=2$ and $0$ otherwise. For any $0 \leq j < i \leq 11$, 
			\[
				e(G_j^\gen, G_i^\gen) =\left\{
					\begin{array}{ll}
						\infty & \text{if } 1 \leq j < i \leq 9 \\
						2 & \text{otherwise}
					\end{array}
				\right.
			\]
			by Theorem~\ref{thm:ExceptCollection_MaxLength}. Also, it is easy to see that $H^0(G_i^\gen, G_j^\gen - K_{X^\gen}) = 0$ for $i > j$, thus for any chain $0 \leq a_0 < \ldots< a_p \leq 11$,
			\[
				e(G_{a_0}^\gen, G_{a_1}^\gen) + \ldots + e(G_{a_{p-1}}^\gen, G_{a_p}^\gen) + e(G_{a_p}^\gen, G_{a_0}^\gen - K_{X^\gen}) - p \geq 2p + 1 - p,
			\]
			which shows that the value of the left hand side is at least $2$ for any chain of length${}>0$. It follows that $\op{ph}_{\rm ac}(G_0^\gen,\ldots,G_{11}^\gen) =2$. The statements about $\op{HH}^\bullet$ immediately follows by \cite[Corollary~4.6]{Kuznetsov:Height}. \qedhere
		\end{proof}
		\subsection{Cohomology computations}
		We present Dictionary~\ref{dictionary:H0Computations} of cohomology computations that appeared in the proof of Theorem~\ref{thm:ExceptCollection_MaxLength}. It needs the divisors illustrated in Figure~\ref{fig:Configuration_Basic}, together with one more curve, which did not appear in Figure~\ref{fig:Configuration_Basic}. Let $\ell$ be the proper transform of the unique line in $\P^2$ passing through the nodes of $p_* C_1$ and $p_* C_2$. In the divisor form,
		\[
			\ell = p^*H - E_1 - (E_2 + E_3).
		\]
		Due to the divisor forms
		\[
			\begin{array}{r@{}l}
				C_1 &{}=p^*(3H) - 2E_1 - \sum_{i=1}^9F_i, \\
				C_2 &{}=p^*(3H) - (2E_2 + 3E_3) - \sum_{i=1}^9 F_i,\ \text{and} \\
				C_0 &{}=p^*(3H) - \sum_{i=1}^9 F_i,
			\end{array}
		\]
		it is straightforward to write down the intersections involving $\ell$:
		\[
			\begin{array}{  c | c | c | c | c | c | c | c | c | c }
					& p^*H & F_i & C_0 & C_1 & E_1 & C_2 & E_2 & E_3 & \ell \\
				\hline%
				\ell & 1 & 0 & 3 & 1 & 1 & 1 & 1 & 0 & -1
			\end{array}
		\]
		\begin{dictionary}\label{dictionary:H0Computations}
			For each of the following Cartier divisors on $Y$, we give upper bounds of $h^0$. The main strategy is the following. We take smooth rational curves $A_1,\ldots, A_r$, and consider the exact sequence
			\[
				0 \to H^0(D - S_i) \to H^0(D - S_{i-1}) \to H^0(\mathcal O_{A_i}( D - S_{i-1}) ),
			\]
			where $S_i = \sum_{j\leq i} A_j$. This gives the inequality $h^0(D - S_{i-1}) \leq h^0(D - S_i) + h^0( (D - S_{i-1})\big\vert_{A_i})$. Inductively, we get
			\begin{equation}
				h^0(D) \leq h^0(D - S_r) + \sum_{i=1}^{r-1} h^0( ( D - S_i)\big\vert_{A_{i+1}}). \label{eq:dictionary:UpperBound}
			\end{equation}
			In what follows, we take $A_1,\ldots,A_r$ carefully so that $h^0(D- S_r)=0$, and that the values $h^0(D - S_{i-1}\big\vert_{A_i})$ are as small as possible. In each item in the dictionary, we first present the target divisor $D$ and the bound of $h^0(D)$. After then, we give a list of smooth rational curves in the following format:
			\[
				A_1,\ A_2,\ \ldots,\ A_i\textsuperscript{(\checkmark)},\ \ldots\ , A_r.
			\]
			The symbol $(\checkmark)$ indicates the situation when $(D - S_{i-1} \mathbin. A_i) = 0$, the case in which the right hand side of (\ref{eq:dictionary:UpperBound}) increases by $1$. The curves without symbols indicate the situations in which $(D - S_{i-1} \mathbin . A_i) < 0$, so that $A_i$ does not contribute to the bound of $h^0(D)$. We conclude by showing that $D - S_r$ is not an effective divisor. The upper bound of $h^0(D)$ will be given by the number of $(\checkmark)$'s in the list. Since all of these calculations are routine, we omit the details. From now on, $i$ is any number between $1,2,\ldots,8$.
			\begin{enumerate}[label=\normalfont(\arabic{enumi}), itemsep=7pt plus 5pt minus 0pt]
				\item\label{dictionary:-G_i} $D=p^*(2H) + F_9 - F_i - 2C_0 + C_1 + C_2 + E_2 + E_3$ \hfill $h^0(D)=0$ \\
				The following is the list of curves $A_1,\ldots,A_r$\,(the order is important): $F_9,\, \ell,\, E_2,\, \ell$. The resulting divisor is 
				\[
					D - A_1 - \ldots - A_r = p^*(2H) - F_i - 2C_0 + C_1 + C_2 + E_3 - 2\ell.
				\]
				Since $\ell = p^*H - E_1 - (E_2 + E_3)$ and $C_0 = C_1 + 2E_1 = C_2 + 2E_2 + 3E_3$, $D - A_1 - \ldots - A_r = -F_i$. It follows that $H^0(D) \simeq H^0( - F_i) = 0$.
				\item\label{dictionary:Nonvanish_K-G_i}  $D = p^*(2H) + F_9 - F_i - C_0 + C_1 -E_1 + 2C_2 + E_2.$ \hfill $h^0(D) \leq 1$ \\
				Rule out $C_2,\,E_2,\,\ell\textsuperscript{(\checkmark)},\,C_1,\,F_9,\,C_2,\,\ell,\,E_1$. The resulting divisor is $p^*(2H) - F_i - C_0 - 2E_1 - 2\ell = -F_i - C_2 - E_3$. Since there is only one checkmark, $h^0(D) \leq h^0(-F_i - C_2 - E_3) + 1 = 1$.
				\item\label{dictionary:Nonvanish_K-G_9} $D = p^*(2H) - 2C_0 + C_1 + C_2 + E_2 + E_3$ \hfill $h^0(D) \leq 1$ \\
				Rule out $\ell,\, E_2,\, \ell,\, C_2\textsuperscript{(\checkmark)}$. The remaining part is $ p^*(2H) - 2C_0 + C_1 + E_3 - 2\ell = - C_2$, thus $h^0(D) \leq 1$.
				\item\label{dictionary:Nonvanish_K+G_i-G_10} $D = p^*(3H) + 2F_9 + F_i - 2C_0 + 2C_1 - E_1 + 3C_2 + E_2 - E_3$ \hfill $h^0(D) \leq 2$ \\
				The following is the list of divisors that we have to remove:
				\[
					C_2,\ E_2,\ \ell\textsuperscript{(\checkmark)},\ E_2,\ F_9\textsuperscript{(\checkmark)},\ C_2,\ E_2,\ \ell,\ C_1,\ F_9,\ F_i,\ \ell.
				\]
				The remaining part is $p^*(3H) - 2C_0 + C_1 - E_1 + C_2 - E_2 - E_3 - 3\ell = -E_3$, thus $h^0(D) \leq 2$.
				\item\label{dictionary:Nonvanish_K+G_9-G_10} $D = p^*(3H) + 3F_9 - 3C_0 + 3C_1 + 3C_2 + 2E_2 + E_3$ \hfill $h^0(D) \leq 2$ \\
				Rule out the following curves:
				\[
					F_9\textsuperscript{(\checkmark)},\ C_1,\ C_2,\ E_2,\ F_9,\ \ell,\ E_2\textsuperscript{(\checkmark)},\ \ell,\ C_2,\ \ell,\ E_2,\ E_3,\ F_9,\ C_1,\ E_1.
				\]
				The remaining part is $p^*(3H) - 3C_0 + C_1 - E_1 + C_2 - E_2 -3\ell = -C_0$, thus $h^0(D) \leq 2$.
			\end{enumerate}
		\end{dictionary}
	\section{Appendix}\label{sec:Appendix}
	\subsection{A brief review on Hacking's construction.}\label{subsec:HackingConstruction}
		Let $n>a>0$ be coprime integers, let $X$ be a projective normal surface with quotient singularities, and let $(P \in X)$ be a $T_1$-singularity of type $(0 \in \A^2 / \frac{1}{n^2}(1,na-1))$. Suppose there exists a one parameter deformation $\mathcal X / ( 0 \in T)$ of $X$ over a smooth curve germ $(0 \in T)$ such that $(P \in \mathcal X) / (0 \in T)$ is a $\Q$-Gorenstein smoothing of $(P \in X)$.
		\begin{proposition}[{\cite[\textsection3]{Hacking:ExceptionalVectorBundle}}]\label{prop:HackingWtdBlup}
			Take the base extension $(0 \in T') \to (0 \in T)$ of ramification index $a$, and let $\mathcal X'$ be the pull back along the extension. Then, there exists a proper birational morphism $\Phi \colon \tilde{\mathcal X} \to \mathcal X'$ satisfying the following properties.
			\begin{enumerate}
				\item The exceptional fiber $W = \Phi^{-1}(P)$ is isomorphic to the projective normal surface
				\[
					(xy = z^n + t^a) \subset \P_{x,y,z,t}(1,na-1,a,n).
				\]
				\item The morphism $\Phi$ is an isomorphism outside $W$.
				\item\label{item:prop:HackingWtdBlup} The central fiber $\tilde{\mathcal X}_0 = \Phi^{-1}(\mathcal X'_0)$ is reduced and has two irreducible components: $\tilde X_0$ the proper transform of $X$, and $W$. The intersection $Z:=\tilde X_0 \cap W$ is a smooth rational curve given by $(t=0)$ in $W$. Furthermore, the surface $\tilde X_0$ can be obtained in the following way: take a minimal resolution $Y \to X$ of $(P \in X)$, and let $E_1,\ldots,E_r$ be the chain of exceptional curves arranged in such a way that $(E_i . E_{i+1})=1$ and $(E_r^2) = -2$. Then the contraction of $E_2,\ldots,E_r$ defines $\tilde X_0$. Clearly, $E_1$ maps isomorphically onto $Z$ along the contraction $Y \to \tilde X_0$.
			\end{enumerate}
		\end{proposition}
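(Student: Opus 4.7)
The plan is to construct $\Phi$ by a purely local analytic construction near $P$, extended by the identity outside a small neighborhood of $P$; since we work with germs, gluing back to $\mathcal X'$ is automatic. Three steps are involved: fix a local analytic model for the smoothing $(P\in\mathcal X)/(0\in T)$, perform an explicit weighted blowup of that model after the degree-$a$ base change, and identify the resulting proper transform $\tilde X_0$ with the partial resolution of $X$ described in~\ref{item:prop:HackingWtdBlup}.

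For the local model, recall that the $T_1$-singularity $\frac{1}{n^2}(1,na-1)$ is analytically isomorphic to $(xy=z^n)/\mu_n$, where $\mu_n$ acts by $(x,y,z)\mapsto(\zeta x,\zeta^{-1}y,\zeta^a z)$ with $\zeta$ a primitive $n$-th root of unity. Its one-parameter $\Q$-Gorenstein smoothing has the analytic model $(xy=z^n+t)/\mu_n$ over $\Spec\C\{t\}$, with $\mu_n$ trivial on $t$. By versality and the formal smoothness of $\mathfrak T$ discussed earlier, the germ $(P\in\mathcal X)/(0\in T)$ is analytically isomorphic to this model. After the degree-$a$ base change $t=s^a$ prescribed by the ramification index, the local model becomes $(xy=z^n+s^a)/\mu_n$. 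I would then perform the $\mu_n$-equivariant weighted blowup of $\A^4_{x,y,z,s}$ at the origin with weights $(1,na-1,a,n)$. The equation is weighted-homogeneous of degree $na$, so its proper transform meets the exceptional divisor, and after descending through the $\mu_n$-quotient the exceptional locus is exactly the weighted hypersurface $W=(xy=z^n+t^a)\subset\P(1,na-1,a,n)$; this yields~(1). Since the weighted blowup is an isomorphism over the complement of the center, (2) follows.

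The subtlest step, and what I expect to be the main obstacle, is~(3): the identification of $\tilde X_0$ with the surface obtained from the minimal resolution $Y\to X$ by contracting $G_2,\ldots,G_r$. The curve $Z=\tilde X_0\cap W$ is cut out by $s=0$ in $W$, hence equals $(xy=z^n)\subset\P(1,na-1,a)$, which is a smooth rational curve. Singularities of $\tilde X_0$ can appear only at the two coordinate points of $Z$ where $x=1$ or $y=1$; inspection of the two corresponding affine charts of the weighted blowup shows, after the $\mu_n$-quotient, that $\tilde X_0$ carries at most two cyclic quotient singularities whose types can be read off directly from the weights $(1,na-1,a,n)$. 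The remaining combinatorial verification is that these two cyclic-quotient types coincide with the two singularities that arise when one contracts exactly $G_2,\ldots,G_r$ on $Y$ while retaining $G_1$; this amounts to checking that the two Hirzebruch-Jung continued fraction expansions produced by the weighted-blowup charts match the two substrings of the chain for $\frac{n^2}{na-1}$ obtained by deleting $G_1$. Once this matching is established (by induction on the length of the chain, exactly as in Hacking's local analysis in~\cite{Hacking:ExceptionalVectorBundle}), normality and reducedness of $\tilde{\mathcal X}_0$ follow from Cohen-Macaulayness of the total space together with the fact that each component appears with multiplicity one in the weighted blowup, and the proof is complete.
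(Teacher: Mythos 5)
The paper does not actually prove this proposition: it is stated in the appendix as a quotation of Hacking's construction, with an explicit citation to \cite[\textsection3]{Hacking:ExceptionalVectorBundle} and a pointer telling the reader to consult that paper for details. So there is no in-paper argument to compare against; what you have written is, in outline, Hacking's own proof. Your architecture is the right one: the local analytic model $(xy=z^n)/\mu_n$ with $\mu_n$ acting by $(\zeta,\zeta^{-1},\zeta^a)$, the versal $\Q$-Gorenstein smoothing $(xy=z^n+t)/\mu_n$, the degree-$a$ base change $t=s^a$, and the $\mu_n$-equivariant weighted blowup with weights $(1,na-1,a,n)$. The key observation making the descent work --- that the $\mu_n$-action is the restriction of the grading $\mathbb{G}_m$-action, since $(\zeta^1,\zeta^{na-1},\zeta^a,\zeta^n)=(\zeta,\zeta^{-1},\zeta^a,1)$, so that $\mu_n$ acts trivially on the exceptional $\P(1,na-1,a,n)$ --- is implicit in your write-up and deserves to be said; it is also why the equation $xy=z^n+s^a$, being weighted-homogeneous of degree $na$, has initial form equal to itself and cuts out all of $W$ in the exceptional divisor.

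Two points need repair. First, a concrete combinatorial slip in your treatment of (c): $G_1$ is an \emph{endpoint} of the chain (it is linked only to $G_2$, with the $(-2)$-curve $G_r$ at the opposite end), so deleting it leaves a single connected substring $G_2,\ldots,G_r$, and contracting that connected chain produces exactly \emph{one} cyclic quotient singularity on $\tilde X_0$, lying on $Z$ at the point corresponding to $[0\!:\!1\!:\!0]\in\P(1,na-1,a)$. Your proposed matching of ``two substrings obtained by deleting $G_1$'' with ``two chart singularities'' does not parse as stated; the actual verification is that the one relevant affine chart of the weighted blowup exhibits the quotient singularity whose Hirzebruch--Jung expansion is $[k_2,\ldots,k_r]$. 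Second, the statements you defer --- that chart computation, and the multiplicity-one claim underlying reducedness of $\tilde{\mathcal X}_0$ (which is precisely where the degree-$a$ base change is used) --- are the entire mathematical content of part (3); asserting that they follow ``exactly as in Hacking's local analysis'' makes your text an outline of the cited proof rather than a self-contained one. Since the paper itself treats the proposition as a black box, that is a defensible level of detail, but the endpoint/substring confusion should be fixed, as it suggests the chart computation was not actually carried out.
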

		\begin{proposition}[{\cite[Proposition~5.1]{Hacking:ExceptionalVectorBundle}}]\label{prop:Hacking_BundleG}
			There exists an exceptional vector bundle $G$ of rank $n$ on $W$ such that $G \big\vert_{Z} \simeq \mathcal O_Z(1)^{\oplus n}$.
		\end{proposition}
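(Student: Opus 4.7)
The statement is Hacking's construction, and my approach is to produce $G$ by deforming an exceptional bundle from a smoothing of $W$. The key observation is that $W=(xy=z^n+t^a)\subset\P(1,na-1,a,n)$ is itself the central fibre of a canonical one-parameter $\Q$-Gorenstein family $\mathcal W\to(0\in S)$, whose generic fibre is (a partial smoothing of) the weighted projective plane $\P(1,a,n-a)$ --- the standard Manetti model of the $\Q$-Gorenstein smoothing of the $T_1$-singularity $\frac{1}{n^2}(1,na-1)$. The rational curve $Z=(t=0)\subset W$ arises as the flat limit of a distinguished rational curve on the generic fibre, and this is what will let us control the restriction of $G$ to $Z$ at the end.

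First I would write $\mathcal W$ explicitly by perturbing the equation $xy-z^n-t^a$ so that the total space has only mild singularities concentrated in the central fibre, and match the distinguished rational curve in the generic fibre with $Z$. Second I would construct a rank-$n$ exceptional bundle $G_\eta$ on the generic fibre as the cokernel of a Koszul-type map
\[
0\to\mathcal O\to\bigoplus_{i=1}^{n+1}\mathcal O(d_i)\to G_\eta\to 0
\]
on the weighted projective plane, with the weights $d_i$ chosen so that $G_\eta$ restricted to the distinguished curve is $\mathcal O(1)^{\oplus n}$; exceptionality of $G_\eta$ then reduces to standard cohomology of line bundles on $\P(1,a,n-a)$. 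Third, I would argue that $G_\eta$ extends uniquely to a coherent sheaf $\mathcal G$ on $\mathcal W$: exceptionality gives $\Ext^2(G_\eta,G_\eta)=0$, killing the obstruction to deformation, so a reflexive extension exists. Local freeness at the singular points of the central fibre will follow from the particular shape of the local Picard / class group of the $T_1$-singularity, in which the relevant Chern class is trivial. Setting $G:=\mathcal G|_W$ and invoking upper-semicontinuity of $h^i(\mathcal End\,\mathcal G_s)$ together with constancy of $\chi(\mathcal End\,\mathcal G_s)=1$ then gives exceptionality on $W$, and the identification $G|_Z\simeq\mathcal O_Z(1)^{\oplus n}$ can be read off the explicit presentation of $G_\eta$.

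The principal obstacle is the third step --- upgrading a reflexive extension across the $T_1$-singularities of $\mathcal W$ to a genuinely locally free one. This is the technical core of Hacking's original argument, and uses in an essential way that $\Q$-Gorenstein smoothings of $T_1$-singularities are locally Gorenstein of index one at the generic point of the singular locus, so that Picard classes extend from the generic fibre. Once this deformation-theoretic issue is settled, the remaining verifications (exceptionality on $W$ and the precise shape of $G|_Z$) are just explicit cohomology and Chern-class computations on weighted projective planes.
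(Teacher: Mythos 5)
First, note that the paper does not prove this statement: it is quoted directly from Hacking \cite[Proposition~5.1]{Hacking:ExceptionalVectorBundle}, where $G$ is constructed on the special surface $W$ itself; the deformation theory is then used in the opposite direction, to propagate bundles from the reducible central fibre $\tilde X_0\cup W$ out to the general fibre. Your proposal runs the degeneration the other way, and this creates two genuine gaps. The first is the starting point: the claim that $W=(xy=z^n+t^a)\subset\P(1,na-1,a,n)$ sits in a $\Q$-Gorenstein family with generic fibre (a partial smoothing of) $\P(1,a,n-a)$ fails in general. Adjunction gives $K_W^2=(n+a)^2/(na-1)$, while $K^2_{\P(1,a,n-a)}=(n+1)^2/\bigl(a(n-a)\bigr)$; these agree when $a=1$ (where in fact $W\simeq\P(1,n-1,1)$ already, cf.\ Remark~\ref{rmk:SimplestSingularCase}), but they differ already for $(n,a)=(3,2)$, namely $5$ versus $8$, and $K^2$ of the fibres is constant in a $\Q$-Gorenstein family. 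So the weighted projective plane on which you propose to write the Koszul presentation of $G_\eta$ is not available in general.

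The second gap is that your deformation-theoretic steps point the wrong way. Vanishing of $\Ext^2(G_\eta,G_\eta)$ on the generic fibre makes deformations of $G_\eta$ unobstructed \emph{sideways}, but it does not produce a locally free limit over the special point; you acknowledge this as ``the technical core'' and defer it, but that existence statement is exactly what would have to be proved, and it is not a formal consequence of index-one behaviour along the singular locus. Worse, the concluding step is logically reversed: upper-semicontinuity gives $h^i(\End\mathcal G_0)\ge h^i(\End\mathcal G_s)$ for nearby $s$, so exceptionality of $G_\eta$ together with $\chi(\End\mathcal G_s)=1$ yields only a \emph{lower} bound on the special fibre and cannot exclude $h^1=h^2>0$ there. (This is precisely why both Hacking and the present paper, e.g.\ in Proposition~\ref{prop:ExceptCollection_ofLengthNine}, always compute cohomology on the degenerate fibre and then pass to the general one.) A correct proof has to build $G$ and verify its exceptionality and the isomorphism $G\big\vert_Z\simeq\mathcal O_Z(1)^{\oplus n}$ directly on $W$, using the explicit hypersurface description, rather than obtaining $G$ as a limit from a smoother fibre.
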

		\begin{remark}\label{rmk:SimplestSingularCase}
			Note that in the decomposition $\tilde{\mathcal X}_0 = \tilde X_0 \cup W$, the surface $W$ is completely determined by the type of singularity $(P \in X)$, whereas $\tilde X_0$ reflects the global geometry of $X$. In some circumstances, $W$ and $G$ have explicit descriptions. 
			\begin{enumerate}
				\item Suppose $a=1$. In $\P_{x,y,z,t}(1,n-1,1,n)$, we have $W_2 =( xy = z^n + t)$ and $Z_2 = (xy=z^n, t=0)$ by Proposition~\ref{prop:HackingWtdBlup}. The projection map $\P_{x,y,z,t}(1,n-1,1,n) \dashrightarrow \P_{x,y,z}(1,n-1,1)$ sends $W_2$ isomorphically onto $\P_{x,y,z}$, thus we get
				\[
					W_2 \simeq \P_{x,y,z}(1,n-1,1),\quad\text{and}\quad Z_2 \simeq (xy=z^n) \subset \P_{x,y,z}(1,n-1,1).
				\]
				\item Suppose $(n,a) = (2,1)$, then it can be shown (by following the proof of Proposition~\ref{prop:Hacking_BundleG}) that $W = \P_{x,y,z}^2$, $G = \mathcal T_{\P^2}(-1)$ where $\mathcal T_{\P^2} = (\Omega_{\P^2}^1)^\vee$ is the tangent sheaf of the plane. Moreover, the smooth rational curve $Z = \tilde X_0 \cap W$ is embedded as a smooth conic $(xy = z^2)$ in $W$.
			\end{enumerate}
		\end{remark}
		The final proposition presents how to obtain an exceptional vector bundle on the general fiber of the smoothing.
		\begin{proposition}[{\cite[\textsection4]{Hacking:ExceptionalVectorBundle}}]\label{prop:HackingDeformingBundles}
			Let $X^\gen$ be the general fiber of the deformation $\mathcal X / (0 \in T)$, and assume $H^2(\mathcal O_{X^\gen}) = H^1(X^\gen,\Z) = 0$.\footnote{Since quotient singularities are Du Bois, we have $H^1(\mathcal O_X) = H^2(\mathcal O_X) = 0$. ({\it cf.} \cite[Lem.~4.1]{Hacking:ExceptionalVectorBundle})} Let $G$ be the exceptional vector bundle on $W$ in Proposition~\ref{prop:Hacking_BundleG}. Suppose there exists a Weil divisor $D \in \Cl X$ such that $D$ does not pass through the singular points of $X$ except $P$, the proper transform $D' \subset \tilde X_0$ of $X$ satisfies $(D'. Z) = 1$, and $\op{Supp} D' \subset \tilde X_0 \setminus \op{Sing} \tilde X_0$. Then the vector bundles $\mathcal O_{\tilde X_0}(D')^{\oplus n}$ and $G$ glue along $\mathcal O_Z(1)^{\oplus n}$ to produce an exceptional vector bundle $\tilde E$ on $\tilde{\mathcal X}_0$. Furthermore, the vector bundle $\tilde E$ deforms uniquely to an exceptional vector bundle $\tilde{\mathcal E}$ on $\tilde{\mathcal X}$. Restriction $\tilde{\mathcal E}\big\vert_{X^\gen}$ to the general fiber is an exceptional vector bundle on $X^\gen$ of rank $n$.
		\end{proposition}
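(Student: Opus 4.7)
The plan is to carry out Hacking's construction in three stages: assemble $\tilde E$ by gluing along $Z$, verify exceptionality on $\tilde{\mathcal X}_0$ via a Mayer--Vietoris sequence, and then transport $\tilde E$ to $X^\gen$ through a unique deformation.

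For the gluing step, the hypothesis $\op{Supp} D' \subset \tilde X_0 \setminus \op{Sing} \tilde X_0$ ensures that $\mathcal O_{\tilde X_0}(D')$ is a genuine line bundle, and the condition $(D'.Z) = 1$ forces $\mathcal O_{\tilde X_0}(D')\big\vert_Z \simeq \mathcal O_Z(1)$. Combined with Proposition~\ref{prop:Hacking_BundleG}, this gives compatible restrictions $\mathcal O_{\tilde X_0}(D')^{\oplus n}\big\vert_Z \simeq \mathcal O_Z(1)^{\oplus n} \simeq G\big\vert_Z$; fixing any such isomorphism $\varphi$ and forming the fiber product of restriction maps along $Z$ produces a rank-$n$ locally free sheaf $\tilde E$ on $\tilde{\mathcal X}_0 = \tilde X_0 \cup W$, independent of $\varphi$ up to isomorphism.

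To check exceptionality, I would apply the analog of the Mayer--Vietoris sequence (\ref{eq:ExactSeq_onReducibleSurface}) (with a single exceptional component $W$ instead of two) to the sheaf $\varEnd(\tilde E)$. The restrictions are computable: $\varEnd(\mathcal O_{\tilde X_0}(D')^{\oplus n}) \simeq \mathcal O_{\tilde X_0}^{\oplus n^2}$, $\varEnd(\mathcal O_Z(1)^{\oplus n}) \simeq \mathcal O_Z^{\oplus n^2}$, and $\varEnd(\tilde E)\big\vert_W = \varEnd(G)$. The key cohomology inputs are (i) $H^p(\mathcal O_{\tilde X_0}) = 0$ for $p>0$, which follows from Proposition~\ref{prop:CohomologyComparison_YtoX} applied to the partial resolution $Y \to \tilde X_0$ together with $H^p(\mathcal O_Y) = 0$; (ii) $H^p(\mathcal O_Z) = 0$ for $p>0$ since $Z \simeq \P^1$; and (iii) $H^p(\varEnd(G)) = \delta_{p,0}\cdot \C$ by exceptionality of $G$. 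The long exact cohomology sequence then collapses to
\begin{equation*}
0 \to \Hom(\tilde E,\tilde E) \to M_n(\C) \oplus \C \xrightarrow{(A,c)\mapsto A - c\cdot I_n} M_n(\C) \to \Ext^1(\tilde E,\tilde E) \to 0,
\end{equation*}
with $\Ext^p(\tilde E, \tilde E) = 0$ for $p \geq 2$. The middle map is surjective with one-dimensional kernel spanned by $(I_n,1)$, so $\Hom(\tilde E,\tilde E) = \C$ and $\Ext^{\geq 1}(\tilde E,\tilde E) = 0$.

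Finally, $\Ext^2(\tilde E,\tilde E) = 0$ removes obstructions to deformation while $\Ext^1(\tilde E,\tilde E) = 0$ gives uniqueness; standard infinitesimal lifting combined with Grothendieck's existence theorem then algebrizes the formal deformation to a vector bundle $\tilde{\mathcal E}$ on $\tilde{\mathcal X}$. The restriction $\tilde{\mathcal E}\big\vert_{X^\gen}$ is locally free of rank $n$, and upper-semicontinuity of $\dim \Ext^k$ in the flat family $\tilde{\mathcal X}/(0 \in T')$ yields $\dim\Ext^k(\tilde{\mathcal E}\big\vert_{X^\gen},\tilde{\mathcal E}\big\vert_{X^\gen}) \leq \dim \Ext^k(\tilde E,\tilde E)$ for $k \geq 1$; the identity endomorphism together with deformation invariance of $\chi$ then pins down all three $\Ext^k$ groups and forces the restriction to be exceptional. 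The main obstacle in executing this plan is the algebrization step, where the hypotheses $H^2(\mathcal O_{X^\gen}) = H^1(X^\gen,\Z) = 0$ are needed to control the obstruction theory along the whole family, precisely as in \cite[Lemma~4.1]{Hacking:ExceptionalVectorBundle}.
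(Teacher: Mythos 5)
The paper itself offers no proof of this proposition: it is stated as a direct quotation of Hacking's results (\cite[\textsection4]{Hacking:ExceptionalVectorBundle}), so there is no internal argument to compare yours against. What you have written is essentially a correct reconstruction of Hacking's original proof, and the three-stage plan (glue along $Z$, compute $\Ext^\bullet(\tilde E,\tilde E)$ via the Mayer--Vietoris sequence for $\varEnd(\tilde E)$, then deform using unobstructedness and semicontinuity) is exactly the right skeleton; the exact sequence $0 \to \Hom(\tilde E,\tilde E) \to M_n(\C)\oplus\C \to M_n(\C) \to \Ext^1(\tilde E,\tilde E) \to 0$ and the identification of the middle map as $(A,c)\mapsto A - cI_n$ are correct. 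Two small points deserve care. First, the vanishing $H^p(\mathcal O_{\tilde X_0})=0$ for $p>0$ should not be derived from ``$H^p(\mathcal O_Y)=0$'': in the general setting of the appendix $Y$ is only the minimal resolution of $(P\in X)$ and nothing forces its structure sheaf cohomology to vanish a priori. The correct source is the footnote: the hypotheses $H^2(\mathcal O_{X^\gen})=H^1(X^\gen,\Z)=0$ give $H^1(\mathcal O_X)=H^2(\mathcal O_X)=0$ because quotient singularities are Du Bois, and then $H^p(\mathcal O_{\tilde X_0})\simeq H^p(\mathcal O_X)$ since $\tilde X_0\to X$ contracts a chain of rational curves over a rational singularity (this is where those hypotheses actually enter, rather than in the algebraization step as your closing sentence suggests). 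Second, the assertion that $\tilde E$ is independent of the chosen gluing isomorphism $\varphi$ needs the remark that every automorphism of $\mathcal O_Z(1)^{\oplus n}$, i.e.\ every element of $GL_n(\C)$, lifts to a (constant) automorphism of $\mathcal O_{\tilde X_0}(D')^{\oplus n}$; this is true here but is worth saying, since for general gluing data it fails. Neither point affects the validity of the argument.
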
 
\bigskip
\footnotesize
\noindent{\bf Acknowledgments.}
	The first author thanks to Kyoung-Seog Lee for helpful comments on derived categories. He also thanks to Alexander Kuznetsov and Pawel Sosna for giving explanation on the technique of height used in Section \ref{subsec:Incompleteness}. The second author thanks to Fabrizio Catanese and Ilya Karzhemanov for useful remarks. The authors would like to appreciate many valuable comments from the anonymous referee.
	This work is supported by Global Ph.D. Fellowship Program through the National Research Foundation of Korea(NRF) funded by the Ministry of Education(No.2013H1A2A1033339)\,(to Y.C.), and is partially supported by the NRF of Korea funded by the Korean government(MSIP)(No.2013006431)\,(to Y.L.).
\bibliography{manuscript}

\end{document}